\DeclarePairedDelimiterX{\infdivx}[2]{(}{)}{%
	#1\;\delimsize\|\;#2%
}
\newcommand{\infdiv}{\mathrm{KL}\infdivx}
\newcommand{\sinfdiv}{\mathrm{J}\infdivx}
\DeclarePairedDelimiter{\norm}{\lVert}{\rVert}
\newcommand{\bS}{\mathbb{S}}
\newcommand{\sP}{\mathcal{P}}
\newcommand{\sW}{\mathcal{W}}
\newcommand{\sG}{\mathcal{G}}
\newcommand{\sS}{\mathcal{S}}
\newcommand{\sI}{\mathcal{I}}
\newcommand{\sX}{\mathcal{X}}
\newcommand{\sF}{\mathcal{F}}
\newcommand{\sR}{\mathcal{R}}
\newcommand{\sA}{\mathcal{A}}
\newcommand{\sM}{\mathcal{M}}
\newcommand{\sN}{\mathcal{N}}
\newcommand{\sB}{\mathcal{B}}
\newcommand{\R}{\mathbb{R}}
\DeclareMathOperator{\E}{\mathbf{E}}
\newcommand{\eps}{\varepsilon}
\newcommand{\tSigma}{\tilde{\Sigma}}
\let\Pr\relax
\DeclareMathOperator{\Pr}{\mathbf{P}}
\DeclareMathOperator{\TV}{TV}
\DeclareMathOperator{\tr}{tr}
\DeclareMathOperator{\VC}{VC}
\newcommand{\widesim}[2][1.5]{
	\mathrel{\overset{#2}{\scalebox{#1}[1]{$\sim$}}}
}
\newcommand{\iid}{\mathbin{\widesim[2]{i.i.d.}}}
\let\originalleft\left
\let\originalright\right
\def\left#1{\mathopen{}\originalleft#1}
\def\right#1{\originalright#1\mathclose{}}
\newcommand{\dif}{\mathop{}\!\mathrm{d}}
\newcommand{\seclabel}[1]{\label{sec:#1}}
\newcommand{\secref}[1]{\mbox{Section~\ref{sec:#1}}}
\newcommand{\eqlabel}[1]{\label{eq:#1}}
\renewcommand{\eqref}[1]{(\ref{eq:#1})}
\newtheorem{thm}{Theorem}{\bfseries}{\itshape}
\newcommand{\thmlabel}[1]{\label{thm:#1}}
\newcommand{\thmref}[1]{Theorem~\ref{thm:#1}}
\numberwithin{thm}{section}
\newtheorem{lem}[thm]{Lemma}{\bfseries}{\itshape}
\newcommand{\lemlabel}[1]{\label{lem:#1}}
\newcommand{\lemref}[1]{Lemma~\ref{lem:#1}}
\newtheorem{cor}[thm]{Corollary}{\bfseries}{\itshape}
\newtheorem{prop}[thm]{Proposition}{\bfseries}{\itshape}
\newcommand{\proplabel}[1]{\label{prop:#1}}
\newcommand{\propref}[1]{Proposition~\ref{prop:#1}}
\theoremstyle{definition}
\theoremstyle{plain}
\renewcommand{\tilde}[1]{\widetilde{#1}}
\newcommand{\transpose}{^{\mathsf{T}}}
\begin{document}

\begin{frontmatter}
\title{The minimax learning rates of normal and Ising undirected graphical models}
\runtitle{Minimax Learning Rates of Normal \& Ising MRF}


 \author{\fnms{Luc} \snm{Devroye}\thanksref{t2}\ead[label=e1]{lucdevroye@gmail.com}}
 \thankstext{t2}{Supported by NSERC Grant A3456.} 
\and
 \author{\fnms{Abbas} \snm{Mehrabian}\thanksref{t3}\corref{}\ead[label=e2]{abbas.mehrabian@gmail.com}}
\thankstext{t3}{Supported by a CRM-ISM postdoctoral fellowship and an IVADO-Apog\'ee-CFREF postdoctoral fellowship.} 
\and
 \author{\fnms{Tommy} \snm{Reddad}\thanksref{t4}\ead[label=e3]{tommy.reddad@gmail.com}}
\thankstext{t4}{Supported by an NSERC PGS D scholarship 396164433.}

\address{School of Computer Science \\ McGill University\\ Montr\'{e}al, Qu\'{e}bec, Canada \\ \printead{e1,e2,e3}}

\runauthor{Devroye, Mehrabian, and Reddad}

\begin{abstract}
	{Let $G$ be an undirected graph with $m$ edges and $d$ vertices. We
	show that $d$-dimensional Ising models on $G$ can be learned from
	$n$ i.i.d.\ samples within expected total variation distance some
	constant factor of $\min\{1, \sqrt{(m + d)/n}\}$, and that this rate
	is optimal. We show that the same rate holds for the class of
	$d$-dimensional multivariate normal undirected graphical models with
	respect to $G$. We also identify the optimal rate of
	$\min\{1, \sqrt{m/n}\}$ for Ising models with no external magnetic
	field.}
\end{abstract}

\begin{keyword}[class=MSC]
\kwd[Primary ]{62G07}
\kwd[; secondary ]{82B20}
\end{keyword}

\begin{keyword}
\kwd{density estimation}
\kwd{distribution learning}
\kwd{graphical model}
\kwd{Markov random field}
\kwd{Ising model}
\kwd{multivariate normal}
\kwd{Fano's lemma}
\end{keyword}


\tableofcontents

\end{frontmatter}

\section{Introduction}
The Ising model is a popular mathematical model inspired by
ferromagnetism in statistical mechanics.  The model consists of
discrete $\{-1,1\}$ random variables representing magnetic dipole
moments of atomic spins.  The spins are arranged in a
graph---originally a lattice, but other graphs have also been
considered---allowing each spin to interact with its graph
neighbors. Sometimes, the spins are also subject to an external
magnetic field.

The Ising model is one of many possible mean field models for spin
glasses. Its probabilistic properties have caught the attention of
many researchers---see, e.g., the monographs of
Talagrand~\cite{talagrand-2003,talagrand-2010,talagrand-2011}. The
analysis of social networks has brought computer scientists into the
fray, as precisely the same model appears there in the context of
community detection~\cite{berthet}.

In this work we view an Ising model as a probability distribution on
$\{-1,1\}^d$, and consider the following statistical inference and
learning problem, known as {\em density estimation} or {\em
	distribution learning}: given i.i.d.\ samples from an unknown Ising
model $I$ on a known graph $G$, can we create a probability
distribution on $\{-1,1\}^d$ that is close to $I$ in total variation
distance?  If we have $n$ samples, then how small can we make the
expected value of this distance? We prove that if $G$ has $m$ edges,
the answer to this question is bounded from above and below by
constant factors of $\sqrt{(m + d)/n}$. In the case when there is no
external magnetic field, the answer is instead $\sqrt{m/n}$.

Our techniques carry over to the continuous case and allow us to prove a
similar minimax risk for learning the class of $d$-dimensional normal
undirected graphical models on $G$. It is surprising that the minimax
rate for this class was not known, even when $G$ is the complete
graph, corresponding to the class of all $d$-dimensional normal
distributions.

\subsection{Main results}

We start by stating our results for normal distributions.
For formal
definitions of all terms mentioned below, see \secref{prelim}.
We express minimax risks as functions of the number of samples $n$.

\begin{thm}[Main result for learning normals]\thmlabel{normal-main-bound}
	Let $G$ be a given undirected graph with vertex set
	$\{1, \dots, d\}$ and $m$ edges. Let $\sF_G$ be the class of
	$d$-dimensional multivariate normal undirected graphical models with
	respect to $G$. Then, the minimax rate for learning $\sF_{G}$ in
	total variation distance is bounded from above and below by constant
	factors of $\min\{1, \sqrt{(m + d)/n}\}$.
\end{thm}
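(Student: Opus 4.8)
The plan is to exploit that $\sF_G$ is a regular exponential family. Writing a member as $N(\mu,K^{-1})$ with $K$ positive-definite and supported on $\{(i,i):i\in V\}\cup E(G)$, the natural parameters are the $m+2d$ numbers $(K\mu)_i$ and $K_{ij}$ for $i=j$ or $ij\in E(G)$, with sufficient statistic $T(X)=\bigl((X_i)_{i},(X_iX_j)_{i=j\ \mathrm{or}\ ij\in E(G)}\bigr)\in\R^{m+2d}$. When $n$ is below a fixed constant multiple of $m+d$, the quantity $\min\{1,\sqrt{(m+d)/n}\}$ is within a constant factor of $1$, so the estimator simply outputs $N(0,I)$; otherwise I would output the maximum-likelihood member $P_{\hat\theta}\in\sF_G$, which (almost surely, as $n\ge d$) exists, lies in $\sF_G$, and is characterized by moment matching $\E_{\hat\theta}[T]=\hat T:=\frac1n\sum_{t=1}^nT(X_t)$. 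Two exponential-family identities drive the analysis: the Jacobian of $\theta\mapsto\E_\theta[T]$ equals the Fisher information $\sI(\theta)$, and $\operatorname{Cov}_\theta(T)=\sI(\theta)$. Inverting the moment-matching equation near the truth $\theta^\ast$ gives $\hat\theta-\theta^\ast\approx\sI(\theta^\ast)^{-1}(\hat T-T^\ast)$ with $T^\ast:=\E_{\theta^\ast}[T]$; expanding the (closed-form) Kullback--Leibler divergence between Gaussians to second order about $\theta^\ast$ gives $\infdiv{P_{\theta^\ast}}{P_{\hat\theta}}\approx\tfrac12(\hat\theta-\theta^\ast)\transpose\sI(\theta^\ast)(\hat\theta-\theta^\ast)\approx\tfrac12\norm{\hat T-T^\ast}_{\sI(\theta^\ast)^{-1}}^2$, hence by Pinsker $\TV(P_{\theta^\ast},P_{\hat\theta})\lesssim\norm{\hat T-T^\ast}_{\sI(\theta^\ast)^{-1}}$. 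Since $\operatorname{Cov}(\hat T)=\sI(\theta^\ast)/n$, we get $\E\bigl[\norm{\hat T-T^\ast}_{\sI(\theta^\ast)^{-1}}^2\bigr]=\tr\bigl(\sI(\theta^\ast)^{-1}\sI(\theta^\ast)\bigr)/n=(m+2d)/n$ --- crucially, this holds \emph{no matter how ill-conditioned $K^\ast$ is}, since the dimension count cancels the anisotropy of the Fisher metric --- and Jensen's inequality then yields $\E[\TV]\lesssim\sqrt{(m+d)/n}$, modulo the uniformity issue discussed next.

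\textbf{The main obstacle} is that the displayed expansions are only first order, valid on a neighbourhood of $\theta^\ast$ whose \emph{raw} size degenerates as $K^\ast$ becomes ill-conditioned; and, unlike the class of all Gaussians, $\sF_G$ is \emph{not} invariant under general invertible affine maps (conjugating $K$ by a non-diagonal matrix destroys its sparsity pattern), so one cannot reduce to a well-conditioned standard position by whitening. The resolution is that all errors must be measured in the Fisher (Mahalanobis) metric $\sI(\theta^\ast)$, in which everything is scale-free: the log-partition function $A$ obeys a self-concordance-type bound $\lvert D^3A[h,h,h]\rvert\lesssim(D^2A[h,h])^{3/2}$ (being essentially the logarithmic barrier of the relevant convex cone), which upgrades the Taylor expansions above to genuine two-sided estimates on a ball of \emph{constant} $\sI(\theta^\ast)$-radius about $\theta^\ast$; and $\norm{\hat T-T^\ast}_{\sI(\theta^\ast)^{-1}}$, being a normalized degree-two Gaussian chaos, concentrates about $\sqrt{(m+2d)/n}$ with sub-exponential tails (Hanson--Wright), so $\hat\theta$ lies in that ball with probability $1-e^{-\Omega(n)}$, while on the complementary (negligible) event one outputs $N(0,I)$ and uses $\TV\le1$. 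An alternative that bypasses the explicit expansion is a Scheffé/minimum-distance estimate over an $\eps$-net of the localized class $\{P\in\sF_G:\infdiv{N(0,\hat\Sigma_0)}{P}\le1\}$ for a crude preliminary estimate $\hat\Sigma_0$ with $(1-c)\Sigma^\ast\preceq\hat\Sigma_0\preceq(1+c)\Sigma^\ast$: this class has TV-metric entropy $O((m+d)\log(1/\eps))$, and the spurious logarithm is removed by the usual local-entropy refinement.

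\textbf{Lower bound.} When $n\ge m+d$, I would apply Fano's inequality to a packing of $\sF_G$ centred at $\theta_0=(0,I)$. By the Varshamov--Gilbert bound pick $N=2^{\Omega(m+d)}$ sign vectors $b\in\{\pm1\}^{m+2d}$ with pairwise Hamming distance at least $(m+2d)/4$; split $b$ into $d$ ``mean'' signs $\beta_b\in\{\pm1\}^d$ and a symmetric zero-diagonal matrix $B_b$ carrying the remaining $m$ signs on the edges of $G$ (one could equally perturb the diagonal of $K$ if the means are held fixed), and set $P_b:=N\bigl(\delta\beta_b,(I+\delta B_b)^{-1}\bigr)\in\sF_G$ with $\delta=\gamma/\sqrt n$ for a small absolute constant $\gamma$. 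Positive-definiteness is immediate from the crude bound $\norm{\delta B_b}_{\mathrm{op}}\le\norm{\delta B_b}_F=\delta\sqrt{2m}=\gamma\sqrt{2m/n}<1$ for $\gamma$ small; notably no spectral-norm refinement is needed, precisely because $n\ge m$. Standard Gaussian-perturbation estimates (using $\tr B_b=0$, the inequality $x-1-\log x\le(x-1)^2$ near $x=1$, and $\norm{(I+\delta B_b)^{-1}}_{\mathrm{op}}\le2$) give $\infdiv{P_b}{P_{b'}}\lesssim\delta^2(m+d)$ and $\TV(P_b,P_{b'})\gtrsim\delta\sqrt{m+d}$ for every pair, so the mutual information between the random index and the $n$ samples is $\le n\cdot O(\delta^2(m+d))=O(\gamma^2(m+d))$, below $\tfrac12\log N$ for $\gamma$ small; Fano's inequality then forces a minimax risk $\gtrsim\delta\sqrt{m+d}=\gamma\sqrt{(m+d)/n}$. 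When $n<m+d$, the same packing with $\delta$ taken to be a small absolute constant times $1/\sqrt{m+d}$ keeps every pairwise divergence $O(1)$ while forcing every pairwise total variation distance $\gtrsim1$, and Fano's inequality gives a minimax risk bounded below by an absolute constant, which is within a constant factor of $\min\{1,\sqrt{(m+d)/n}\}$ in that range. Together the two ranges match the upper bound up to constants.
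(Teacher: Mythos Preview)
Your lower bound is essentially the paper's: Fano over a Gilbert--Varshamov packing of precision matrices $I+\delta B_b$. (The paper splits the $d$ and $m$ contributions, citing the known $\sqrt{d/n}$ bound for spherical Gaussians and proving $\sqrt{m/n}$ separately via zero-mean perturbations, whereas you do both at once; this is cosmetic.) One step you gloss over: the assertion $\TV(P_b,P_{b'})\gtrsim\delta\sqrt{m+d}$ is not ``standard''. Pinsker gives only an \emph{upper} bound on TV from KL, and the naive Hellinger route yields only $\TV\gtrsim H^2\asymp\delta^2(m+d)$, which is too weak by a factor $\delta\sqrt{m+d}$. The paper invokes a separate estimate (\lemref{tvlowerbound}, from~\cite{we_tv_gaussians}) that $\TV(f_\Sigma,f_{\tilde\Sigma})\gtrsim\min\{1,\|\Sigma^{1/2}\tilde\Sigma^{-1}\Sigma^{1/2}-I\|_F\}$; you need to cite or prove something of this kind.

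Your upper bound is a genuinely different route. The paper never touches the MLE: it simply observes that the Yatracos class of $\sF_G$ sits inside the family of sets $\{x:x^\top Ax+b^\top x+c>0\}$ with $A$ symmetric and supported on $E\cup\{(i,i)\}$, a vector space of dimension $m+2d+1$, hence of VC-dimension at most $m+2d+1$ by Dudley's bound; then the minimum-distance estimator of \thmref{risk-vc} gives $\sR_n(\sF_G)\le c\sqrt{(m+2d+1)/n}$ in two lines, with no conditioning issues, no localization, and no analysis of the log-partition. Your exponential-family argument is structurally sound---the identity $\E\|\hat T-T^\ast\|^2_{\sI(\theta^\ast)^{-1}}=(m+2d)/n$ is exactly the right scale-free fact, and the log-partition $-\tfrac12\log\det K+\tfrac12\eta^\top K^{-1}\eta$ is indeed self-concordant (embed $(\eta,K)$ into a $(d{+}1)\times(d{+}1)$ log-det barrier)---but turning this into a clean non-asymptotic bound uniform over $\sF_G$ is real work, and your fallback via local entropy still requires a localization step that the VC argument avoids entirely. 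What your approach would buy is an explicit, efficiently computable estimator (the MLE), whereas the paper's Yatracos-based estimator is not; the paper in fact flags efficient estimation as an open problem in its discussion.
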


The upper bound follows from standard techniques (see
\secref{normal-upper}) and a lower bound of $\min\{1, \sqrt{d/n}\}$ is
known (see Section~\ref{sec:related}); our main technical contribution
is to show a lower bound of $\min\{1, \sqrt{m/n}\}$, from which
\thmref{normal-main-bound} follows.  This theorem immediately implies
a tight result on the minimax rate for learning the class of all
$d$-dimensional normals, if we take the graph $G$ to be complete. In
this specific case, the upper bound is already known, so our
contribution is the matching lower bound.
\begin{cor}
	The minimax rate for learning the class of all $d$-dimensional
	multivariate normal distributions in total variation distance is
	bounded from above and below by constant factors of
	$\min\{1, d/\sqrt{n}\}$.
\end{cor}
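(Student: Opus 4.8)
The plan is to obtain this as an immediate specialization of \thmref{normal-main-bound} to the complete graph. First I would note that when $G = K_d$, the complete graph on $\{1,\dots,d\}$, the pairwise Markov property imposes no conditional independence constraints (equivalently, the precision matrix is allowed an arbitrary off-diagonal pattern), so the class $\sF_{K_d}$ is exactly the class of all (nondegenerate) $d$-dimensional multivariate normal distributions. One should check against the definitions in \secref{prelim} that this identification is literal under the paper's conventions, including how degenerate covariance matrices, if any, are treated; this is the only point that needs a moment's thought.

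Next I would substitute the edge count $m = \binom{d}{2} = d(d-1)/2$ of $K_d$ into the rate of \thmref{normal-main-bound}. Then $m + d = d(d+1)/2$, so $d^2/2 \le m + d \le d^2$ for every $d \ge 1$, and hence $d/\sqrt{2n} \le \sqrt{(m+d)/n} \le d/\sqrt{n}$. Using the elementary observation that, for a fixed constant $c \in (0,1]$ and all $x \ge 0$, one has $c\,\min\{1,x\} \le \min\{1, cx\} \le \min\{1,x\}$, it follows that $\min\{1,\sqrt{(m+d)/n}\}$ agrees with $\min\{1, d/\sqrt{n}\}$ up to an absolute constant factor.

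Combining the previous two steps with \thmref{normal-main-bound} yields both the upper and lower bounds claimed in the corollary. There is no substantive obstacle here: the argument is just a reparametrization, and the only care required is bookkeeping of the absolute constants and confirming that $\sF_{K_d}$ really is the full Gaussian family under the paper's setup.
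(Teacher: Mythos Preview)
Your proposal is correct and follows exactly the paper's approach: the corollary is stated as an immediate consequence of \thmref{normal-main-bound} by taking $G$ to be the complete graph $K_d$, and the paper gives no further argument beyond that remark. Your explicit bookkeeping that $m+d = d(d+1)/2$ lies between $d^2/2$ and $d^2$, together with the observation that $\sF_{K_d}$ is the full (nondegenerate) Gaussian family since $\sP_{K_d} = \sP_d$, simply spells out what the paper leaves implicit.
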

In fact, this result can be extended using the techniques of
\cite{2017-abbas} to yield the lower bound of 
$\Omega(\min\{1, d\sqrt{k/n}\})$ for the minimax learning rate of \emph{mixtures} of $k$ many
$d$-dimensional multivariate normals, which was previously known only
up to logarithmic factors.

We remark that for the class of \emph{zero-mean} normal undirected graphical
models, we prove a lower bound of $\min\{1, \sqrt{m/n}\}$ while the
best known upper bound is $\min\{1, \sqrt{(m + d)/n}\}$. In practice,
the underlying graph is typically connected, which means that
$m \ge d - 1$, so these bounds match.

We prove similar rates as in \thmref{normal-main-bound} for the class
of Ising models, which resemble discrete versions of multivariate
normal distributions. An Ising model in dimension $d$ is supported on
$\{-1, 1\}^d$ and comes with an undirected graph $G = (V, E)$ with
vertex set $V = \{1, \dots, d\}$, edge set
$E \subseteq \{\{i, j\} \colon i \neq j \in V\}$, interactions
$w_{ij} \in \R$ for each $\{i, j\} \in E$, and external magnetic field
$h_i \in \R$ for $1 \le i \le d$ such that $x \in \{-1, 1\}^d$ appears
with probability proportional to
\[
\exp\left\{\sum_{\{i, j\} \in E} w_{ij} x_i x_j + \sum_{i = 1}^d h_i x_i\right\} .
\]
Note that our definition has no temperature parameter; we have
absorbed it into the weights.

\begin{thm}[Main result for learning Ising models]\thmlabel{ising-main-bound}
	Let $G$ be a given undirected graph with vertex set
	$\{1, \dots, d\}$ and $m$ edges. Let $\sI_G$ be the class of
	$d$-dimensional Ising models with underlying graph $G$.
	\begin{enumerate}[label=(\roman*)]
		\item \label{ising-1} The minimax rate for learning $\sI_{G}$ in
		total variation distance is bounded from above and below by
		constant factors of $\min\{1, \sqrt{(m + d)/n}\}$.
		\item \label{ising-2} Let $\sI'_{G}$ be the subclass $\sI_G$ of
		Ising models with no external magnetic field. The minimax rate for
		learning $\sI'_{G}$ in total variation distance is bounded from
		above and below by constant factors of $\min\{1, \sqrt{m/n}\}$.
	\end{enumerate}
\end{thm}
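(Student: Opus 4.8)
The plan is to prove matching upper and lower bounds for both parts. \emph{Upper bounds.} These are routine. The class $\sI_G$ is an $(m+d)$-parameter exponential family whose sufficient statistics are the monomials $x_ix_j$ for $\{i,j\}\in E$ and $x_i$ for $i\in V$, while $\sI'_G$ is the same family with only the $m$ statistics $x_ix_j$; for any two members the log-likelihood ratio is affine in these statistics, so the Yatracos sets $\{x : p_\theta(x)>p_{\theta'}(x)\}$ are halfspaces in the corresponding feature space and the Yatracos class has $\VC$ dimension at most $m+d+1$ (resp.\ $m+1$). The minimum-distance estimate over the class then has expected total variation risk $O(\sqrt{(m+d)/n})$ (resp.\ $O(\sqrt{m/n})$) by the same argument that proves the upper bound in \thmref{normal-main-bound}; combined with the trivial bound $1$ this gives the upper bounds in both parts.

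\emph{Lower bounds.} Both come from one family of Ising models that are small perturbations of the uniform law $U$ on $\{-1,1\}^d$. Write $\chi_S(x)=\prod_{i\in S}x_i$; the monomials $\{\chi_S:|S|\in\{1,2\}\}$ are orthonormal in $L^2(U)$, and since they have Fourier-degree at most $2$ the Bonami--Beckner inequality gives $\norm{P}_4\le 3\norm{P}_2$ for every polynomial $P$ in their span. Fix $\delta=c_0/\sqrt n$ for a small absolute constant $c_0$, and for a sign vector $\sigma$ indexed by $E$ (for part (ii)) or by $E\cup V$ (for part (i)) let $p_\sigma\in\sI_G$ be the Ising model whose weights and fields all have magnitude $\delta$ and signs $\sigma$. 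With $v=\delta(\sigma-\sigma')$, the goal is to show, whenever $m+d\le n$, that
\[
\TV(p_\sigma,p_{\sigma'})\ \ge\ c_1\norm{v}_2
\qquad\text{and}\qquad
\infdiv{p_\sigma}{p_{\sigma'}}\ \le\ c_2\norm{v}_2^2 .
\]
Granting these, a Varshamov--Gilbert code supplies $M=2^{\Omega(m+d)}$ (resp.\ $2^{\Omega(m)}$) sign vectors with pairwise Hamming distances at least a quarter of the length, so the corresponding $p_\sigma$ are pairwise $\Omega(\delta\sqrt{m+d})=\Omega(\sqrt{(m+d)/n})$ apart in total variation, while $\infdiv{p_\sigma^{\otimes n}}{p_{\sigma'}^{\otimes n}}=n\,\infdiv{p_\sigma}{p_{\sigma'}}\le O(c_0^2)(m+d)\le\tfrac12\log M$ once $c_0$ is small; Fano's lemma then forces the minimax risk to be $\Omega(\sqrt{(m+d)/n})$. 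Restricting to the $E$-coordinates keeps every $p_\sigma$ field-free, hence in $\sI'_G$, and gives $\Omega(\sqrt{m/n})$. The regime $m+d>n$ is handled by restricting to a subgraph with $\min\{m,n\}$ edges when $m$ dominates, or to a zero-interaction product-of-Bernoullis model on $\min\{d,n\}$ perturbed coordinates when $d$ dominates, each giving a separation of order $\min\{1,\sqrt{(m+d)/n}\}$; constant $m$ is handled by the two-point method.

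\emph{Main obstacle.} The real work is the two displayed inequalities, i.e.\ showing the parametrisation is bi-Lipschitz near $U$ with the correct scaling: that $\log\E_U[e^{f}]$ is $O(1)$ when $\norm{f}_2=O(1)$ and that the Fisher information stays within a constant factor of the identity across the perturbation ball. Expanding $e^{w_\alpha\chi_\alpha}=\cosh w_\alpha+\chi_\alpha\sinh w_\alpha$ for a field-free $f=\sum_\alpha w_\alpha\chi_\alpha$ with $|w_\alpha|\le\delta$ yields $\E_U[e^f]=\prod_\alpha\cosh w_\alpha\cdot(1+\sum_{\ell\ge 3}\sum_T\prod_{\alpha\in T}\tanh w_\alpha)$, the inner sum over even subgraphs $T$ with $\ell$ edges; the prefactor is $e^{O(m\delta^2)}=e^{O(c_0^2)}$, and the crucial estimate is that the number of even subgraphs with $\ell$ edges is at most $(C\sqrt m)^{\ell}$ for an absolute constant $C$, which follows from $\rho(A)\le\sqrt{2m}$ (spectral radius of the adjacency matrix, since $\rho^2\le\norm{A}_F^2=2m$) together with $\tr(A^\ell)\le 2m\,\rho^{\ell-2}$. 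Because $\delta\sqrt m=c_0\sqrt{m/n}\le c_0$, this makes the series $O(1)$ for $c_0$ small, and the same bound with $2\delta,4\delta$ controls $\E_U[e^{2f}],\E_U[e^{4f}]$; the external field is folded in through a Cauchy--Schwarz split $\E_U[e^{f_w+f_h}]\le\E_U[e^{2f_w}]^{1/2}\E_U[e^{2f_h}]^{1/2}$ with $\E_U[e^{2f_h}]=\prod_i\cosh(2h_i)\le e^{2d\delta^2}$. These moment bounds, with $\norm{P}_4\le 3\norm{P}_2$ for degree-$\le 2$ polynomials $P$, give $\E_{p_\sigma}[P^2]=O(1)\,\E_U[P^2]$ for every quadratic $P$, hence that the Fisher information has operator norm $O(1)$; integrating this along the segment joining $p_\sigma$ to $p_{\sigma'}$ gives the $\mathrm{KL}$ bound. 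The total variation bound follows by testing against $\operatorname{sgn}(g)$ with $g=\sum v_\alpha\chi_\alpha$: one has $\E_U[|g|]\ge\tfrac19\norm{v}_2$ (a Paley--Zygmund estimate using the degree-$2$ hypercontractive constant), and the perturbative error in $\E_{p_\sigma}[\operatorname{sgn}(g)]-\E_{p_{\sigma'}}[\operatorname{sgn}(g)]$ is $O(c_0)\norm{v}_2$. The most delicate point is the constant-chasing that makes $c_1,c_2$ compatible with the Varshamov--Gilbert rate inside Fano.
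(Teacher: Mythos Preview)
Your overall architecture matches the paper's: the upper bounds via the Yatracos VC argument are identical, and the lower bounds both proceed by planting $\pm\delta$ weights indexed by a Gilbert--Varshamov code and invoking Fano's lemma. The technical engines, however, differ. The paper controls the moments of $X\transpose W X$ via the Hanson--Wright inequality (\lemref{form-utils}), which immediately yields bounds on the partition function (\lemref{partition-function}), on the Jeffreys divergence (\lemref{ising-kl}), and on $\|f_W-f_{\tilde W}\|_1$ through the H\"older inequality $\E|Y|\ge(\E Y^2)^{3/2}/(\E Y^4)^{1/2}$ (\lemref{ising-l1}). You instead use the Bonami--Beckner inequality for degree-$2$ polynomials and the high-temperature expansion of the partition function over even subgraphs, then bound the KL by controlling the Fisher information along the segment and the TV by testing against $\operatorname{sgn}(g)$. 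Both routes are valid; the paper's Hanson--Wright approach is shorter and avoids any graph combinatorics, while yours makes the role of the degree-$2$ structure more explicit and unifies parts (i) and (ii) by running the code over $E\cup V$ rather than treating the product-Bernoulli piece separately as in \propref{no-external}.

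One point in your sketch needs repair. You claim that the number of even subgraphs with $\ell$ edges is at most $(C\sqrt m)^\ell$, citing $\tr(A^\ell)\le 2m\rho^{\ell-2}$ and $\rho\le\sqrt{2m}$. But $\tr(A^\ell)$ counts closed \emph{walks} of length $\ell$, not even subgraphs; a disconnected even subgraph (e.g., two disjoint triangles) is not a single closed walk. The fix is to first decompose each even subgraph into edge-disjoint cycles, bound the number of $\ell$-cycles by $\tr(A^\ell)/(2\ell)\le m\rho^{\ell-2}/\ell$, and then sum over ordered cycle tuples via a generating-function argument: with $S(\delta)=\sum_{\ell\ge 3}\tfrac{m}{\ell}\rho^{\ell-2}\delta^\ell$, one gets $\sum_{T\text{ even}}\delta^{|T|}\le(1-S(\delta))^{-1}$, and $S(\delta)=O((m\delta^2)^{3/2})$ is small when $\delta\sqrt m\le c_0$. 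This recovers exactly what you need, but the step as written does not follow from the trace bound alone.
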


In all of the above cases, it is assumed that the full structure and labeling of the
underlying graph $G$ is known in advance. We next consider the case in
which it is only known that the underlying graph has $d$ vertices and
$m$ edges.
\begin{thm}\thmlabel{unknowngraph}
	Let $\sF_{d,m}$ and $\sI_{d,m}$ be the class of all normal and Ising
	undirected graphical models with respect to some unknown graph with
	$d$ vertices and $m$ edges. The minimax learning rates for
	$\sF_{d, m}$ and $\sI_{d, m}$ are both bounded from above by a
	constant factor of $\min\{1, \sqrt{(m + d) \log (d)/n}\}$, and bounded
	from below by a constant factor of $\min\{1, \sqrt{(m + d)/n}\}$.
\end{thm}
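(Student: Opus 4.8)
The plan is to obtain the lower bound directly from the known-graph results \thmref{normal-main-bound} and \thmref{ising-main-bound}, and to obtain the upper bound by running, for \emph{every} admissible graph, the corresponding known-graph learner, and then combining the resulting candidates with a standard hypothesis-selection procedure at a cost that is only logarithmic in the number of graphs.

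\emph{Lower bound.} We may assume $m \le \binom{d}{2}$, since otherwise $\sF_{d,m}$ and $\sI_{d,m}$ are empty. Fix any graph $G_0$ on the vertex set $\{1,\dots,d\}$ with exactly $m$ edges; then $\sF_{G_0} \subseteq \sF_{d,m}$ and $\sI_{G_0} \subseteq \sI_{d,m}$. For every estimator the worst-case risk over a subclass is no larger than the worst-case risk over the superclass, so the minimax rate of $\sF_{d,m}$ is at least that of $\sF_{G_0}$, which is $\Omega(\min\{1, \sqrt{(m+d)/n}\})$ by \thmref{normal-main-bound}; similarly, the minimax rate of $\sI_{d,m}$ is at least that of $\sI_{G_0}$, which is $\Omega(\min\{1, \sqrt{(m+d)/n}\})$ by part~(i) of \thmref{ising-main-bound}.

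\emph{Upper bound.} Let $N$ denote the number of graphs on $\{1,\dots,d\}$ with exactly $m$ edges, so that $N \le \binom{\binom{d}{2}}{m} \le d^{2m}$ and hence $\log N = O(m\log d)$. We may assume $\sqrt{(m+d)\log(d)/n} \le 1$, as otherwise the asserted bound exceeds the trivial bound $\TV \le 1$; in particular $n \ge m + d$. Split the $n$ samples into two independent halves. Using the first half, run for each of the $N$ graphs $G$ the learner supplied by \thmref{normal-main-bound} (respectively, by part~(i) of \thmref{ising-main-bound}), obtaining candidate distributions $\hat p_G$; whenever the true distribution $p^\star$ lies in $\sF_G$ (respectively, $\sI_G$), this learner satisfies $\E\,\TV(\hat p_G, p^\star) = O(\sqrt{(m+d)/n})$. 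Conditionally on the first half, the $N$ distributions $\hat p_G$ are fixed, so a standard hypothesis-selection estimator (a ``Scheff\'{e} tournament'') run on the second half returns a distribution $\hat q$ with
\[
\E\bigl[\TV(\hat q, p^\star) \bigm| \text{first half}\bigr] \;\le\; O(1)\cdot\min_{G}\TV(\hat p_G, p^\star) \;+\; O\!\left(\sqrt{\frac{\log N}{n}}\right).
\]
Since $p^\star$ belongs to $\sF_{G^\star}$ (respectively, $\sI_{G^\star}$) for some admissible graph $G^\star$, bounding the minimum by its $G^\star$ term and taking expectations over the first half gives
\[
\E\,\TV(\hat q, p^\star) \;=\; O\!\left(\sqrt{\frac{m+d}{n}}\right) + O\!\left(\sqrt{\frac{m\log d}{n}}\right) \;=\; O\!\left(\sqrt{\frac{(m+d)\log d}{n}}\right),
\]
using $(m+d) + m\log d = O((m+d)\log d)$. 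The argument is word-for-word identical for the normal and the Ising classes.

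\emph{Main difficulty.} There is no genuine obstacle here: the lower bound is just monotonicity of the minimax rate under class inclusion, and the upper bound is the routine ``union over a finite family of learnable classes, pay $\log(\text{size})$'' device. The points that require some care are (i) the sample split, which allows the hypothesis-selection guarantee --- valid only when the candidate hypotheses are independent of the sample used for selection --- to be invoked conditionally and then de-conditioned by taking expectations; (ii) the counting bound $\log N = O(m\log d)$ together with $(m+d) + m\log d = O((m+d)\log d)$, which merges the two error terms into the claimed rate; and (iii) recalling, for the normal case, that hypothesis selection applies to arbitrary (in particular, continuous) densities.
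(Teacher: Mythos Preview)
Your proposal is correct, and the lower-bound argument is exactly what the paper does. For the upper bound, however, you take a genuinely different route. The paper works directly with the Yatracos class of $\sF_{d,m}$ (resp.\ $\sI_{d,m}$): it writes this class as the union $\bigcup_{(G,H)\in\sG_{d,m}^2}\sA_{G,H}$, observes that each $\sA_{G,H}$ has VC-dimension $O(m+d)$ (by the same quadratic-form argument used in \secref{normal-upper}), and then applies a standard bound on the VC-dimension of a union of finitely many classes to get $\VC(\sA)=O((m+d)\log d)$, after which \thmref{risk-vc} yields the rate in one step. Your approach instead treats the known-graph learners of \thmref{normal-main-bound} and \thmref{ising-main-bound} as black boxes, runs all $N\le d^{2m}$ of them on half the sample, and resolves the resulting candidate list with a Scheff\'e tournament on the other half, incurring the $O(\sqrt{(\log N)/n})$ selection overhead. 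The paper's argument is slightly cleaner in that it avoids sample splitting and does not need to invoke the hypothesis-selection machinery separately; your argument is more modular, since it never reopens the VC computation and would apply verbatim to any family of classes for which per-class minimax rates are already known. Both land on the same $\sqrt{(m+d)\log(d)/n}$ because the two sources of the $\log d$ factor---the VC-of-a-union bound in the paper versus the $\log N$ tournament penalty in your argument---are essentially the same combinatorial overhead counted in two different places.
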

The lower bound in this theorem follows immediately from our lower
bounds for the case in which the graph is known.

In the next section we review related work.  In \secref{prelim} we
discuss preliminaries.  \thmref{normal-main-bound},
\thmref{ising-main-bound}, and \thmref{unknowngraph} are proved in
\secref{normal}, \secref{ising}, and Section~\ref{sec:unknowngraph},
respectively.  We conclude with some open problems in
\secref{conclusion}.

\subsection{Related work}\label{sec:related}

Density estimation is a central problem in statistics and has a long
history~\cite{devroye-course,devroye-gyorfi,ibragimov,tsybakov}.  It
has also been studied in the learning theory community under the name
\emph{distribution learning}, starting from \cite{Kearns}, whose focus
is on the computational complexity of the learning problem. Recently,
it has gained a lot of attention in the machine learning community, as
one of the important tasks in unsupervised learning is to understand
the distribution of the data, which is known to significantly improve
the efficiency of learning algorithms (e.g.,
\cite[page~100]{deeplearningbook}).  See~\cite{Diakonikolas2016} for a
recent survey from this perspective.

An upper bound on the order of $d/\sqrt{n}$ for estimating
$d$-dimensional normals can be obtained via empirical mean and
covariance estimation (e.g., \cite[Theorem~B.1]{2017-abbas}) or via
Yatracos' techniques based on VC-dimension (e.g.,
\cite[Theorem~13]{abbas-mixtures}).  Regarding lower bounds, Acharya,
Jafarpour, Orlitsky, and Suresh~\cite[Theorem~2]{acharya-lower-bound}
proved a lower bound on the order of $\sqrt{d/n}$ for spherical
normals (i.e., normals with identity covariance matrix), which implies
the same lower bound for general normals. The lower bound for general
normals was recently improved to a constant factor of
$\frac{d}{\sqrt{n} \log n}$ by Asthiani, Ben-David, Harvey, Liaw,
Mehrabian, and Plan~\cite{2017-abbas}. In comparison, our result
shaves off the logarithmic factor. Moreover, their result is
nonconstructive and relies on the probabilistic method, while our
argument is fully deterministic.
(Recently, Ashtiani et al.~\cite{2020-abbas} have also shaved off the logarithmic factor and have shown that their probabilistic construction achieves the optimal lower bound of $\frac{d}{\sqrt{n}}$ as well.)

For the Ising model, the main focus in the literature has been on
learning the structure of the underlying graph rather than learning
the distribution itself, i.e., how many samples are needed to
reconstruct the underlying graph with high probability?
Bresler~\cite{Bresler} and Vuffray, Misra, Lokhov, and Chertkov~\cite{VMLC} provided efficient algorithms for this task on bounded-degree graphs.
For general graphs, see~\cite{Santhanam-lower,Shanmugam-lower} for some lower bounds
and~\cite{Hamilton-upper,Klivans-upper} for some upper bounds. Klivans
and Meka~\cite{Klivans-upper} also give an efficient algorithm for
learning all of the parameters of an Ising model up to an additive error given a natural
parametric constraint. Otherwise, the Ising model itself has been
studied by physicists in other settings for nearly a century. See the
books of Talagrand for a comprehensive look at the mathematics of the
Ising model~\cite{talagrand-2003, talagrand-2010, talagrand-2011}.

Daskalakis, Dikkala, and Kamath~\cite{costis-2018} were the first to
study Ising models from a statistical point of view. However, their
goal is to test whether an Ising model has certain properties, rather
than estimating the model, which is our goal. Moreover, their focus is
on designing efficient testing algorithms. They prove polynomial
sample complexities and running times for testing various properties
of the model.

An alternative goal would be to estimate the parameters of the
underlying model (see, e.g., \cite{KMV}) rather than coming up with a model
that is statistically close, which is our focus.  We remark that these
two goals are quantitatively different, although similar techniques
may be used for both. In general, estimating the parameters of a model
to within some accuracy does not necessarily result in a distribution
that is close to the original distribution in a statistical sense. For
instance, define
\[
\Sigma = \begin{pmatrix} 1 & -0.99 \\ -0.99 & 1 \end{pmatrix}
\qquad\text{and}\qquad
\tilde{\Sigma} = \begin{pmatrix} 1 & -1 \\ -1 & 1 \end{pmatrix},
\]
and observe that $\Sigma$ and $\tilde{\Sigma}$ are entrywise very
close. However, $\Sigma$ is non-singular and $\tilde{\Sigma}$ is
singular, and thus two zero-mean normal distributions with covariance
matrices $\Sigma$ and $\tilde{\Sigma}$ are at total variation distance
$1$ from one another. Conversely, if two distributions are close in
total variation distance, their parameters are not necessarily close
to within the same accuracy (see, e.g., \cite[Section~1.3.1]{2017-abbas}).

\section{Preliminaries}\seclabel{prelim}
The goal of density estimation is to design an estimator $\hat{f}$ for
an unknown function $f$ taken from a known class of functions
$\sF$. In the continuous case, $\sF$ is a class of probability density
functions with sample space $\sX = \R^d$ for some $d \ge 1$; in the
discrete case, $\sF$ is a class of probability mass functions with a
countable sample space $\sX$. In either case, in order to create the
estimator $\hat{f}$, we have access to samples
$X_1, \dots, X_n \iid f$. Our measure of closeness is the \emph{total
	variation (TV) distance}: For functions $f, g : \sX \to \R$, their
TV-distance is defined as
\begin{align*}
	\TV(f, g) =  \norm{f - g}_1 /2,
\end{align*}
where for any function $f$, the $L^1$-norm of $f$  is defined as
\begin{align*}
	\norm{f}_1 &= \int_{\sX} |f(x)| \dif x &&\textrm{in the continuous case, and} \\
	\norm{f}_1 &= \sum_{x \in \sX} |f(x)| &&\textrm{in the discrete case. }
\end{align*}
Further along, we will also need the \emph{Kullback-Leibler (KL)
	divergence} or \emph{relative entropy}~\cite{kullback-book}, which
is another measure of closeness of distributions defined by
\begin{align*}
	\infdiv{f}{g} &= \int_{\sX} f(x) \log\left( \frac{f(x)}{g(x)} \right) \dif x &&\textrm{in the continuous case, and} \\
	\infdiv{f}{g} &= \sum_{x \in \sX} f(x) \log \left( \frac{f(x)}{g(x)} \right) &&\textrm{in the discrete case.}
\end{align*}
Formally, in the continuous case, we can write
$f = \frac{\dif F}{\dif \mu}$ for a probability measure $F$ and $\mu$
the Lebesgue measure on $\R^d$, and in the discrete case
$f = \frac{\dif F}{\dif \mu}$ for a probability measure $F$ and $\mu$
the counting measure on countable $\sX$. In view of this unified
framework, we say that $\sF$ is a \emph{class of densities} and that
$\hat{f}$ is a \emph{density estimate}, in both the continuous and the
discrete settings.  

The total variation distance has a natural
probabilistic interpretation as
$\TV(f, g) = \sup_{A\subseteq \sX} |F(A)-G(A)|$, where $F$ and $G$ are
probability measures corresponding to $f$ and $g$, respectively. So,
the TV-distance lies in $[0,1]$.  Also, it is well known that the
KL-divergence is nonnegative, and is zero if and only if the two
densities are equal almost everywhere.  However, it is not symmetric
in general, and can become $+\infty$.

For density estimation there are various possible measures of distance
between distributions.  Here we focus on the TV-distance since it has
several appealing properties, such as being a metric and having a
natural probabilistic interpretation.  For a detailed discussion on
why TV is a natural choice, see \cite[Chapter~5]{devroye-lugosi}.  If
$\hat{f}$ is a density estimate, we define the \emph{risk} of the
estimator $\hat{f}$ with respect to the class $\sF$ as
\[
\sR_n(\hat{f}, \sF) = \sup_{f \in \sF} \E\{ \TV(\hat{f}, f) \} ,
\]
where the expectation is over the $n$ i.i.d.\ samples from $f$, and
possible randomization of the estimator.  The \emph{minimax risk} or
\emph{minimax rate} for learning $\sF$ is the smallest risk over all possible
estimators,
\[
\sR_n(\sF) = \inf_{\hat{f} \colon \sX^n \to \R^{\sX}} \sR_n(\hat{f}, \sF) .
\]

For a class of functions $\sF$ defined on the same domain $\sX$, its
\emph{Yatracos class} $\sA$,
first defined in~\cite{yatracos},
is the class of sets defined by
\[
\sA = \Big\{\{x \in \sX \colon f(x) > g(x)\} \colon f \neq g \in \sF\Big\}.
\]
The following powerful result relates the minimax risk of a class of
densities to an old well-studied combinatorial quantity called the
Vapnik-Chervonenkis (VC) dimension~\cite{vapnik-cherv}, defined next. Let
$\sA \subseteq 2^{\sX}$ be a family of subsets of $\sX$.  The
\emph{VC-dimension} of $\sA$, denoted by $\VC(\sA)$, is the size of
the largest set $X \subseteq \sX$ such that for each $Y\subseteq X$
there exists $B \in \sA$ such that $X \cap B = Y$.  See, e.g.,
\cite[Chapter~4]{devroye-lugosi} for examples and applications.
\begin{thm}[\protect{\cite[Section~8.1]{devroye-lugosi}}]\thmlabel{risk-vc}
	There is a universal constant $c > 0$ such that for any class of
	densities $\sF$ with Yatracos class $\sA$,
	\[
	\sR_n(\sF) \le c \sqrt{\VC(\sA)/n} .
	\]
\end{thm}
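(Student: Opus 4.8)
The plan is to build the minimum-distance (Yatracos) estimate and control its risk by the Vapnik--Chervonenkis uniform-convergence inequality over the Yatracos class $\sA$. Write $F$ for the probability measure of the unknown density $f \in \sF$, and let $F_n$ denote the empirical measure of the sample, so that $F_n(A) = \frac1n\,|\{i \le n : X_i \in A\}|$. For $g \in \sF$ set
\[
\phi(g) = \sup_{A \in \sA}\Big| F_n(A) - \int_A g \Big| ,
\]
and let $\hat{f} \in \sF$ be an approximate minimizer of $\phi$ over $\sF$; when $\sF$ is uncountable one minimizes instead over a countable $L^1$-dense subset, which costs an arbitrarily small additive term and avoids measurability issues, and which we suppress below.

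The first step is entirely deterministic. If $\hat{f} \neq f$, then the witnessing set $A^\star := \{x : \hat{f}(x) > f(x)\}$ belongs to $\sA$ and $\TV(\hat{f}, f) = \int_{A^\star}(\hat{f} - f)$. Writing $\Delta_n := \sup_{A \in \sA}|F_n(A) - F(A)|$, the triangle inequality gives
\[
\TV(\hat{f}, f) = \Big| \int_{A^\star}\hat{f} - F(A^\star)\Big| \le \Big| \int_{A^\star}\hat{f} - F_n(A^\star)\Big| + \big|F_n(A^\star) - F(A^\star)\big| \le \phi(\hat{f}) + \Delta_n .
\]
Since $f \in \sF$ is itself a candidate in the minimization and $\phi(f) = \sup_{A \in \sA}|F_n(A) - F(A)| = \Delta_n$, we get $\phi(\hat{f}) \le \Delta_n$, and hence $\TV(\hat{f}, f) \le 2\Delta_n$ for every realization of the sample.

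Taking expectations, $\sR_n(\sF) \le \sR_n(\hat{f}, \sF) \le 2\sup_{f \in \sF}\E\{\Delta_n\}$, so it suffices to show $\E\{\Delta_n\} \le c'\sqrt{\VC(\sA)/n}$ for a universal $c'$; this is exactly the statement that empirical frequencies converge uniformly over a VC class at rate $\sqrt{\VC/n}$. The Vapnik--Chervonenkis inequality $\Pr\{\Delta_n > t\} \le 8\, s_{\sA}(n)\, e^{-nt^2/32}$, together with Sauer's lemma $s_{\sA}(n) \le (n+1)^{\VC(\sA)}$ and integration of the tail, already yields $\E\{\Delta_n\} = O(\sqrt{\VC(\sA)\log(n)/n})$. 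To remove the spurious logarithm one argues via chaining: after symmetrization, Haussler's packing bound shows the $L^2(F_n)$ covering numbers of $\sA$ at scale $\varepsilon$ are $O(\varepsilon^{-2\VC(\sA)})$, and Dudley's entropy integral then gives the logarithm-free bound $\E\{\Delta_n\} \le c'\sqrt{\VC(\sA)/n}$. Combining this with the deterministic bound proves the theorem.

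The routine parts are the Scheff\'e/Yatracos manipulation of the first two paragraphs and the symmetrization reducing $\E\{\Delta_n\}$ to an expected Rademacher average over $\sA$. The one genuinely delicate point is the last, logarithm-free uniform-convergence bound: a crude union bound over the $(n+1)^{\VC(\sA)}$ traces of $\sA$ loses a $\sqrt{\log n}$ factor, and recovering the clean $\sqrt{\VC(\sA)/n}$ rate requires the sharper covering-number/chaining estimate --- which is precisely the content of the cited Section~8.1 of \cite{devroye-lugosi}.
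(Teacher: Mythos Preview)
The paper does not give its own proof of this theorem; it is quoted as a known result from \cite[Section~8.1]{devroye-lugosi} and is used as a black box throughout. Your sketch is a correct outline of the standard argument behind that cited result: the minimum-distance (Yatracos) estimate reduces the risk to twice the uniform deviation $\Delta_n$ over the Yatracos class, and the log-free bound $\E\{\Delta_n\}\le c'\sqrt{\VC(\sA)/n}$ then follows from symmetrization, Haussler's covering-number bound, and Dudley chaining. So there is nothing to compare against in the paper itself; your proposal simply supplies the proof that the paper omits by citation, and it matches the approach of \cite{devroye-lugosi}.
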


On the other hand, there are several methods for obtaining lower
bounds on minimax risk; we emphasize, in particular, the methods of
Assouad~\cite{assouad}, Le Cam~\cite{lecam-1, lecam-2}, and
Fano~\cite{has-fano}. Each of these involve picking a finite subclass
$\sG \subseteq \sF$, and developing a lower bound on the minimax
risk of $\sG$
using the fact that
$\sR_n(\sG) \le \sR_n(\sF)$---see \cite{devroye-course, devroye-lugosi, yu-survey}
for examples. We will use the following result, known as
(generalized) Fano's lemma, originally due to
Has'minski\u\i~\cite{has-fano}, where $\log(\cdot)$ denotes the natural logarithm function.

\begin{lem}[Fano's Lemma \protect{\cite[Lemma
		3]{yu-survey}}]\lemlabel{fano-lower}
	Let $\sF$ be a finite  class of densities such that
	\begin{align*}
		\inf_{f \neq g \in \sF} \norm{f - g}_1 \ge \alpha , \qquad \sup_{f \neq g \in \sF} \infdiv{f}{g} \le \beta .
	\end{align*}
	Then,
	\[
	\sR_n(\sF) \ge \frac{\alpha}{4} \left(1 - \frac{n \beta + \log 2}{\log |\sF|} \right) .
	\]
\end{lem}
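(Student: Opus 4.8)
The plan is to reduce density estimation to a multiple-hypothesis testing problem and then invoke Fano's inequality from information theory. Fix an arbitrary density estimate $\hat f \colon \sX^n \to \R^{\sX}$, and for $f \in \sF$ write $\E_f$ and $\Pr_f$ for expectation and probability when the $n$ samples are i.i.d.\ from $f$. Define a decoder $\hat\psi(X_1, \dots, X_n)$ to be any element of $\sF$ minimizing $\norm{\hat f - \cdot}_1$ over $\sF$, ties broken arbitrarily. If $\norm{\hat f - f}_1 < \alpha/2$, then for every $g \neq f$ in $\sF$ the triangle inequality together with $\norm{f - g}_1 \ge \alpha$ gives $\norm{\hat f - g}_1 \ge \norm{f - g}_1 - \norm{\hat f - f}_1 > \alpha/2 > \norm{\hat f - f}_1$, so $f$ is the unique minimizer and hence $\hat\psi = f$. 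Contrapositively $\{\hat\psi \neq f\} \subseteq \{\norm{\hat f - f}_1 \ge \alpha/2\} = \{\TV(\hat f, f) \ge \alpha/4\}$, so Markov's inequality gives $\E_f\{\TV(\hat f, f)\} \ge (\alpha/4)\,\Pr_f\{\hat\psi \neq f\}$.

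Next I would pass to a Bayesian version. Let $\Theta$ be uniform over $\sF$, and conditionally on $\Theta = f$ let the samples $X_1, \dots, X_n$ be i.i.d.\ from $f$; write $X^n = (X_1, \dots, X_n)$. Averaging the previous bound over $\Theta$,
\[
\sR_n(\hat f, \sF) \ge \frac{1}{|\sF|} \sum_{f \in \sF} \E_f\{\TV(\hat f, f)\} \ge \frac{\alpha}{4} \cdot \frac{1}{|\sF|} \sum_{f \in \sF} \Pr_f\{\hat\psi \neq f\} = \frac{\alpha}{4}\, p_e ,
\]
where $p_e = \Pr\{\hat\psi(X^n) \neq \Theta\}$ is the average error probability of the test. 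As this holds for every $\hat f$, it suffices to show $p_e \ge 1 - (n\beta + \log 2)/\log|\sF|$ for every decoder. This is Fano's inequality for the Markov chain $\Theta \to X^n \to \hat\psi(X^n)$: with $E = \mathbf{1}\{\hat\psi \neq \Theta\}$, expanding $H(\Theta, E \mid \hat\psi)$ via the chain rule two ways gives $H(\Theta \mid \hat\psi) \le H(E) + p_e \log(|\sF| - 1) \le \log 2 + p_e \log|\sF|$; combining with $H(\Theta \mid \hat\psi) \ge H(\Theta \mid X^n) = \log|\sF| - I(\Theta; X^n)$ (using that $\hat\psi$ is a function of $X^n$ and $\Theta$ is uniform) and rearranging yields the bound on $p_e$.

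It remains to bound $I(\Theta; X^n)$. I would use the identity that for any fixed reference density $q$ on $\sX^n$, $I(\Theta; X^n) = \E_\Theta\{\infdiv{P_{X^n|\Theta}}{q}\} - \infdiv{P_{X^n}}{q} \le \E_\Theta\{\infdiv{P_{X^n|\Theta}}{q}\}$, by nonnegativity of KL divergence. Choosing $q = g_0^{\otimes n}$ for an arbitrary fixed $g_0 \in \sF$ and tensorizing KL over the $n$ i.i.d.\ coordinates, $I(\Theta; X^n) \le \frac{1}{|\sF|} \sum_{f \in \sF} n\,\infdiv{f}{g_0} \le n\beta$, where we used $\infdiv{g_0}{g_0} = 0$ and $\infdiv{f}{g_0} \le \beta$ for $f \neq g_0$. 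Substituting into the Fano bound, then into the display above, and finally taking the infimum over $\hat f$ gives $\sR_n(\sF) \ge \frac{\alpha}{4}\bigl(1 - (n\beta + \log 2)/\log|\sF|\bigr)$.

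I expect the main obstacle to be careful bookkeeping rather than any deep step: tracking the factor of two between the $L^1$ separation $\norm{f - g}_1 \ge \alpha$ and the $\TV \ge \alpha/4$ event passed to Markov's inequality, and verifying that the nearest-density decoder provably recovers $f$ exactly (which forces the strict $\alpha/2$ threshold above). The information-theoretic core—Fano's inequality together with the convexity/tensorization bound $I(\Theta; X^n) \le n\beta$—is standard; a fully self-contained write-up would additionally derive Fano's inequality from the entropy chain rule, which is where the genuine content sits, though one could instead simply cite it, e.g.\ \cite[Lemma~3]{yu-survey}.
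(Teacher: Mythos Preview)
The paper does not supply its own proof of this lemma; it simply cites it as \cite[Lemma~3]{yu-survey} and uses it as a black box. So there is nothing in the paper to compare against.

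Your argument is the standard derivation of generalized Fano's lemma and is correct: the reduction from estimation to testing via the nearest-density decoder and Markov's inequality gives the $\alpha/4$ factor cleanly, the entropy/chain-rule derivation of Fano's inequality is right, and the bound $I(\Theta; X^n) \le n\beta$ via the variational identity $I(\Theta; X^n) = \E_\Theta\{\infdiv{P_{X^n|\Theta}}{q}\} - \infdiv{P_{X^n}}{q}$ with $q = g_0^{\otimes n}$ is exactly the right move. This is essentially the proof one finds in Yu's survey, so you are not just recovering the result but reproducing the cited source's argument.
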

In light of this lemma, to prove a minimax risk lower bound on a class
of densities $\sF$, we shall carefully pick a finite subclass of
densities in $\sF$, such that any two densities in this subclass are
far apart in $L^1$-distance but close in KL-divergence.

Throughout this paper, we will be estimating densities from classes
with a given graphical dependence structure, known as undirected
graphical models~\cite{graphical-models}. The underlying graph will
always be undirected and without parallel edges or self-loops, so we
will omit these qualifiers henceforth. Let $G = (V, E)$ be a
given graph with vertex set $V = \{1, \dots, d\}$ and edge set $E$. A
set of random variables $\{X_1, \dots, X_d\}$ with everywhere strictly
positive densities forms a \emph{graphical model} or \emph{Markov
	random field (MRF)} with respect to $G$ if, for every
$\{i, j\} \not\in E$, the variables $X_i$ and $X_j$ are conditionally
independent given $\{X_k \colon k \neq i, j\}$.

Often, the problem of density estimation is framed slightly
differently than we have presented it: given $\eps \in (0, 1)$, one might
be interested in finding the smallest number of i.i.d.\ samples
$m_\sF(\eps)$ for which there exists a density estimate $\hat{f}$
based on these samples satisfying
$\sup_{f \in \sF} \E\{\TV(\hat{f}, f)\} \le \eps$. Or, given
$\delta \in (0, 1)$, one might want to find the minimum number of
samples $m_\sF(\eps, \delta)$ for which there is a density estimate
$\hat{f}$ satisfying $\sup_{f \in \sF} \TV(\hat{f}, f) \le \eps$ with
probability at least $1 - \delta$. The quantities $m_\sF(\eps)$ and
$m_\sF(\eps, \delta)$ are known as \emph{sample complexities} of the
class $\sF$. Note that $m_\sF(\eps)$ and $\sR_n(\sF)$ are related
through the equation
\[
m_\sF(\eps)
= \min \{n\ge0 : \sR_n(\sF) \leq \eps \},
\]
so determining $\sR_n(\sF)$ also determines $m_\sF(\cdot)$. Moreover, $\delta$
is often fixed to be some small constant, say $1/3$, when studying
$m_\sF(\eps, \delta)$. Then, there are versions of \thmref{risk-vc} and
\lemref{fano-lower} for $m_\sF(\eps, 1/3)$, which introduce some
extraneous $\log(1/\eps)$ factors. In order to avoid such extraneous
logarithmic factors, we focus on $\sR_n(\sF)$---equivalently,
$m_\sF(\eps)$---rather than $m_\sF(\eps, 1/3)$ or
$m_\sF(\eps, \delta)$.

We now recall some basic matrix analysis formulae that will be used
throughout (see Horn and Johnson~\cite{matrix_analysis} for the
proofs). 
We denote the identity matrix by $I$.
For a matrix $A = (A_{ij}) \in \R^{d \times d}$, the
\emph{spectral norm} of $A$ is defined as
$\norm{A} = \sup_{x \in \bS^{d - 1}} \norm{A x}_2$, where
$\bS^{d - 1} = \{ x \in \R^d \colon \|x\|_2 = 1\}$ is the unit
$(d - 1)$-sphere. 
The trace and determinant of $A$ are denoted by $\tr(A)$ and $\det(A)$, respectively.
Recall also the \emph{Frobenius norm} of $A$,
also called the \emph{Hilbert-Schmidt norm} or the \emph{Schur norm},
$\norm{A}_F = \sqrt{ \sum_{i,j = 1}^d A_{i j}^2} =
\sqrt{\tr(A\transpose A)}$. When $A$ has only real eigenvalues, we
write $\lambda_i(A)$ for the $i$th largest eigenvalue of $A$. In
general, we write $\sigma_i(A) = \sqrt{\lambda_i(A\transpose A)}$ for
the $i$th largest singular value of $A$. Then,
$\det(A) = \prod_{i = 1}^d \lambda_i(A)$, and for any $k \ge 1$,
$\tr(A^k) = \sum_{i = 1}^d \lambda_i^k(A)$. Furthermore,
$\|A\| = \sigma_1(A)$, and
$\|A\|_F = \sqrt{\sum_{i = 1}^d \sigma_i(A)^2}$, so
$\|A\| \le \|A\|_F$. 
When
$A$ is invertible, $\sigma_i(A^{-1}) = \sigma_{d - i}(A)^{-1}$ for
every $1 \le i \le d$.
Finally, for any matrix $B \in \R^{d \times d}$, we have (see, e.g., \cite[Fact~7(c) in Section~24.4]{linearalgebrahandbook})
\begin{equation}
	\max\{ 
	\sigma_d(A) \|B\|_F,
	\sigma_d(B) \|A\|_F
	\} \le
	\|A B\|_F \le 
	\min\{\sigma_1(A) \|B\|_F, \sigma_1(B) \|A\|_F\}.\label{frobeqs}
\end{equation}

Throughout this paper, we let $c_0,c_1, c_2, \ldots \in \R$ denote
positive absolute constants. We liberally reuse these symbols, i.e.,
every $c_i$ may differ between proofs and statements of different
results.  We denote the set $\{1,\dots,d\}$ by $[d]$.

\section{Learning normal graphical models}\seclabel{normal}
Let $d$ be a positive integer, $\sP_d \subseteq \R^{d \times d}$ be
the set of positive definite $d \times d$ matrices over $\R$, and
$\sN(\mu, \Sigma)$ denote the multivariate normal distribution with
mean $\mu \in \R^d$, covariance matrix $\Sigma \in \sP_d$, and
corresponding probability density function $f_{\mu, \Sigma}$, where
for $x \in \R^d$,
\[
f_{\mu, \Sigma}(x) = \frac{\exp\left\{ - \frac{1}{2} (x - \mu)\transpose \Sigma^{-1} (x - \mu) \right\} }{(2 \pi)^{d/2} \sqrt{\det(\Sigma)}} .
\]
Let $G = ([d], E)$ be a given graph with $m$ edges. Let
$\sP_G \subseteq \sP_d$ be the following subset of all positive
definite matrices,
\[
\sP_G = \Big\{ \Sigma \in \sP_d \colon \text{ if } \{i, j\} \not\in E \text{ with } i \neq j \in [d], \text{ then } \Sigma^{-1}_{ij} = 0 \Big\} .
\]
The main result of this section is a characterization of the minimax
risk of
\[
\sF_G = \left\{ f_{\mu, \Sigma} \colon \mu \in \R^d , \, \Sigma \in \sP_G \right\} .
\]
It is known that $\sF_G$ is precisely the class of $d$-dimensional
multivariate normal graphical models with respect to
$G$~\cite[Proposition~5.2]{graphical-models}.

\subsection{Proof of the upper bound in \thmref{normal-main-bound}}\seclabel{normal-upper}
We can already prove the upper bound in \thmref{normal-main-bound}
without lifting a finger.  The proof is similar to that of
\cite[Theorem 13]{abbas-mixtures}, which gives the optimal upper bound on the
minimax risk of all multivariate normals, corresponding to the special case in
which $G$ is complete.  Let $\sA$ be the Yatracos class of $\sF_G$,
\begin{align*}
	\sA = \Big\{\{x \in \R^d \colon f_{\mu,\Sigma}(x) > f_{\tilde{\mu},\tilde{\Sigma}}(x)\} \colon (\mu,\Sigma) \neq (\tilde{\mu},\tilde{\Sigma}) \in \R^d \times \sP_d \Big\} ,
\end{align*}
which, after taking logarithms and simplification, is easily seen to be contained in the
larger class
\[
\sA' = \Big\{\{x \in \R^d \colon x\transpose A x + b\transpose x +c > 0\} \colon A \in \R^{d \times d}, \, b \in \R^d, \, c \in \R \Big\},
\]
and thus $\VC(\sA) \le \VC(\sA')$. It remains to upper-bound
$\VC(\sA')$.

In general, let $\sG$ be a vector space of real-valued functions, and define
$\sB \coloneqq \{\{x \colon f(x) > 0\} \colon f \in
\sG\}$. Dudley~\cite[Theorem~7.2]{dudley_vectorspace} proved that
$\VC(\sB) \le \dim(\sG)$. (See \cite[Lemma~4.2]{devroye-lugosi} for a
historical discussion.) In our case, the vector space $\sG$ has a
basis of monomials
\[
\{1\} \cup \{x_i x_j \colon \{i, j\} \in E\} \cup \{x_i,x_i^2 \colon i\in[d]\} ,
\]
so $\VC(\sA') \le m + 2d+1$. By \thmref{risk-vc}, there is a universal
constant $c > 0$ such that
\[
\sR_n(\sF_G) \le c \sqrt{\frac{\VC(\sA')}{n}} \le c \sqrt{\frac{m + 2d+1}{n}} ,
\]
while the upper bound $\sR_n(\sF_G) \leq 1$ follows simply because the
TV-distance is bounded by $1$. \qed

\subsection{Proof of the lower bound in \thmref{normal-main-bound}}\seclabel{normal-lower}
Since a lower bound on the order of $\min\{1,\sqrt{d/n}\}$ for
spherical normals was proved in~\cite[Theorem~2]{acharya-lower-bound},
the lower bound in \thmref{normal-main-bound} follows from
subadditivity of the square root after the following proposition.
\begin{prop}\proplabel{normal-lower-bound}
	There exist $c_0,c_1> 0$ such that for any graph $G = ([d], E)$
	with $m$ edges, where $n \ge c_1 m$,
	\[
	\sR_n(\sF_G) \ge c_0 \sqrt{m/n} .
	\]
\end{prop}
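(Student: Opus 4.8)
The plan is to apply Fano's lemma (\lemref{fano-lower}) to a carefully chosen finite subclass $\sG \subseteq \sF_G$. Since the goal is a lower bound of order $\sqrt{m/n}$, the subclass should be indexed by roughly $2^{\Theta(m)}$ densities, so that $\log|\sG| = \Theta(m)$, and should consist of normals that are pairwise at $L^1$-distance $\Omega(\gamma)$ but within KL-divergence $O(\gamma^2)$ for a scale parameter $\gamma \asymp \sqrt{m/n}$; then Fano gives a bound of order $\gamma(1 - O(n\gamma^2/m)) = \Omega(\sqrt{m/n})$ once the constants are tuned. To keep the graphical-model constraint satisfied automatically, I would take all densities to be zero-mean with precision matrix (inverse covariance) of the form $\Sigma^{-1} = I + \gamma B_S$, where $B_S$ is a symmetric matrix supported on the edges of $G$: for each edge $e = \{i,j\} \in E$ we either put $\pm\gamma$ in the $(i,j)$ and $(j,i)$ entries according to a sign vector $S \in \{-1,+1\}^m$, or use only a subset of edges. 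Such a matrix has off-diagonal entries vanishing on non-edges, so $\Sigma \in \sP_G$, and for $\gamma$ small enough (say $\gamma \le 1/(4\sqrt{m})$, using $\|B_S\| \le \|B_S\|_F \le \sqrt{2m}$) the matrix $I + \gamma B_S$ is positive definite with eigenvalues in $[1/2, 3/2]$.

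The key steps, in order, are: (1) Fix the edge-sign construction $\Sigma_S^{-1} = I + \gamma B_S$ and verify positive-definiteness and the graphical constraint via the Frobenius-norm bound on $B_S$. (2) Lower-bound $\norm{f_{0,\Sigma_S} - f_{0,\Sigma_T}}_1$ for $S \ne T$: since two zero-mean Gaussians with precision matrices $I + \gamma B_S$ and $I + \gamma B_T$ differ, to first order in $\gamma$, through the matrix $\gamma(B_S - B_T)$, one expects their TV-distance to be of order $\gamma \|B_S - B_T\|_F$. Concretely I would pass through a lower bound on TV in terms of the Hellinger distance or directly estimate using the second-order Taylor expansion of the Gaussian log-likelihood ratio, obtaining something like $\TV \gtrsim \gamma\|B_S - B_T\|_F \gtrsim \gamma\sqrt{m}$ when $S,T$ differ on a constant fraction of coordinates. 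To guarantee that, I would thin the sign vectors down to a Gilbert–Varshamov / Plotkin code of size $2^{\Omega(m)}$ with pairwise Hamming distance $\Omega(m)$, so $\log|\sG| = \Omega(m)$ and $\|B_S - B_T\|_F = \Omega(\sqrt m)$ simultaneously. (3) Upper-bound $\infdiv{f_{0,\Sigma_S}}{f_{0,\Sigma_T}}$: the KL divergence between two zero-mean Gaussians is $\frac12\big(\tr(\Sigma_T^{-1}\Sigma_S) - d + \log\frac{\det\Sigma_T}{\det\Sigma_S}\big)$, and with precision matrices $I + \gamma B_S$, $I + \gamma B_T$ having spectrum in $[1/2,3/2]$, a Taylor expansion in $\gamma$ gives $\infdiv{f_{0,\Sigma_S}}{f_{0,\Sigma_T}} = O(\gamma^2 \|B_S - B_T\|_F^2) = O(\gamma^2 m)$. (4) Plug $\alpha = c\gamma\sqrt m$, $\beta = C\gamma^2 m$, $\log|\sG| = c'm$ into Fano, choose $\gamma = \theta\sqrt{1/n}$ with $\theta$ a small enough absolute constant so that $n\beta/\log|\sG| \le 1/2$; this forces $n \ge c_1 m$ for the bound to be nontrivial and yields $\sR_n(\sF_G) \ge c_0\sqrt{m/n}$.

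The main obstacle I anticipate is the \emph{lower} bound on the $L^1$-distance in step (2): upper bounds on TV between nearby Gaussians (step (3)-type estimates) are routine, but a matching \emph{lower} bound of the right order $\gamma\sqrt{m}$ — uniform over the whole code and not just on average — needs care. The clean route is to control the $\chi^2$ or Hellinger affinity $\int \sqrt{f_{0,\Sigma_S} f_{0,\Sigma_T}}$, which for Gaussians has a closed form in terms of $\det$ of $\tfrac12(\Sigma_S^{-1} + \Sigma_T^{-1})$ versus $\det(\Sigma_S^{-1})^{1/2}\det(\Sigma_T^{-1})^{1/2}$; expanding this to second order in $\gamma$ shows the squared Hellinger distance is $\Theta(\gamma^2\|B_S - B_T\|_F^2)$, and then $\TV \ge h^2 \gtrsim \gamma^2 m$ would be too weak — so instead I would use $\TV \ge \tfrac12 h^2$ is insufficient and must use a sharper comparison, e.g. bounding TV below via a single well-chosen test event (such as $\{x\transpose(B_S - B_T)x > t\}$) and a Paley–Zygmund-type second-moment argument, or invoking that for Gaussians $\TV \asymp \min\{1, \|\Sigma_S^{-1/2}\Sigma_T\Sigma_S^{-1/2} - I\|_F\}$ in the relevant regime. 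Getting this comparison right, with absolute constants, is the crux; everything else is bookkeeping with the Frobenius inequalities \eqref{frobeqs} and elementary Taylor expansions.
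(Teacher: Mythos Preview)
Your proposal is correct and follows essentially the same route as the paper: Fano's lemma applied to zero-mean Gaussians with precision matrices $I + \delta B_s$ for sign vectors $s$ drawn from a Gilbert--Varshamov code, with the KL upper bound handled by routine Frobenius-norm estimates and the scale $\delta \asymp 1/\sqrt{n}$ chosen at the end. You have correctly identified the crux as the $L^1$ lower bound, and in fact the paper resolves it exactly by the last option you list, invoking (as \lemref{tvlowerbound}, quoted from a companion paper) the two-sided estimate $\TV(f_\Sigma,f_{\tilde\Sigma}) \gtrsim \min\{1, \|\Sigma^{1/2}\tilde\Sigma^{-1}\Sigma^{1/2} - I\|_F\}$, then reducing this to $\|\tilde\Sigma^{-1}-\Sigma^{-1}\|_F$ via the eigenvalue bounds on $\Sigma^{1/2}$.
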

Note that if $n < c_1 m$, then
$\sR_n(\sF_G) \geq \sR_{c_1 m}(\sF_G) \ge c_0 \sqrt{1/c_1}$, which
implies the lower bound in \thmref{normal-main-bound} 
for such $n$. We prove \propref{normal-lower-bound} via
\lemref{fano-lower}.  This involves choosing a finite subset of
$\sF_G$.  Our normal densities will be zero-mean, but the covariance
matrices will be chosen carefully.  To make this choice, we use the
next result which follows from an old theorem of
Gilbert~\cite{gilbert} and independently Varshamov~\cite{varshamov}
from coding theory.
\begin{thm}\thmlabel{gilbert-varshamov}
	There is a subset $Q \subseteq \{-1, 1\}^m$ of size at least
	$2^{m/5}$ such that for any distinct $s, \tilde{s} \in Q$ we have
	$\norm{s - \tilde{s}}_1 \ge m/3$.
\end{thm}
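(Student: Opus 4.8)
The plan is to derive this from the classical greedy (Gilbert--Varshamov) packing argument. First I would pass from the $L^1$ separation to Hamming distance: since every coordinate of $s,\tilde s$ lies in $\{-1,1\}$, one has $\norm{s-\tilde s}_1 = 2\,d_H(s,\tilde s)$, where $d_H(s,\tilde s)=\#\{i\colon s_i\neq\tilde s_i\}$ is the Hamming distance, so it suffices to produce $Q\subseteq\{-1,1\}^m$ with $|Q|\ge 2^{m/5}$ and $d_H(s,\tilde s)\ge\ceil{m/6}$ for all distinct $s,\tilde s\in Q$. I would take $Q$ to be any maximal subset of the cube all of whose pairwise Hamming distances are at least $\ceil{m/6}$. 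By maximality no further point can be added, so every $x\in\{-1,1\}^m$ lies within Hamming distance $\ceil{m/6}-1\le\floor{m/6}$ of some element of $Q$; hence the Hamming balls of radius $\floor{m/6}$ about the points of $Q$ cover $\{-1,1\}^m$, which gives $2^m\le|Q|\cdot B$ with $B=\sum_{i=0}^{\floor{m/6}}\binom{m}{i}$ the cardinality of such a ball.

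It then remains to bound $B$. I would invoke the standard entropy estimate $\sum_{i=0}^{k}\binom{m}{i}\le 2^{mH(k/m)}$, valid for every integer $0\le k\le m/2$, where $H(p)=-p\log_2 p-(1-p)\log_2(1-p)$ is the binary entropy function; taking $k=\floor{m/6}$ and using monotonicity of $H$ on $[0,1/2]$ gives $B\le 2^{mH(1/6)}$. The one quantitative input is the numerical fact $H(1/6)=\tfrac16\log_2 6+\tfrac56\log_2\tfrac65<\tfrac45$ (indeed $H(1/6)\approx0.65$), from which $B\le 2^{4m/5}$ and therefore $|Q|\ge 2^m/2^{4m/5}=2^{m/5}$, as required. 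No separate treatment of small $m$ is needed, since the covering inequality and the entropy bound both hold for all $m\ge0$ (when $\floor{m/6}=0$ one simply has $B=1$).

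I do not expect a genuine obstacle here. The only care required is the ceiling/floor bookkeeping in passing from ``pairwise distance at least $\ceil{m/6}$'' to ``balls of radius $\floor{m/6}$ cover the cube,'' and the verification of the single inequality $H(1/6)<\tfrac45$ responsible for the clean exponent $m/5$. (Any exponent strictly below $2/5$ could be obtained identically, and a sharper sphere-packing estimate would let the distance constant $1/3$ be improved; the stated constants are chosen merely for convenience in the application to \propref{normal-lower-bound}.)
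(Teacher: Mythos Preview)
Your proof is correct and follows essentially the same greedy packing argument as the paper; the only cosmetic difference is that the paper bounds the Hamming ball by $\sum_{i=0}^{m/6}\binom{m}{i}\le (6e)^{m/6}<2^{4m/5}$ rather than via the entropy estimate $2^{mH(1/6)}$, and the paper is slightly less explicit about the floor/ceiling bookkeeping that you handle carefully.
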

\begin{proof}
	We give an iterative algorithm to build $Q$: choose a vertex from
	the hypercube, put it in $Q$, remove the hypercube points in the
	corresponding $L^1$-ball of radius $m/3$, and repeat.  Since the
	intersection of this ball and the hypercube has size at most
	\[
	\sum_{i=0}^{m/6} \binom{m}{i} \leq \left(\frac {em}{m/6}\right)^{m/6} = (6e)^{m/6} < 2^{4m/5},
	\]
	the size of the final set $Q$ will be at least $2^{m/5}$. Note that the sum goes up to $m/6$ and not $m/3$ here, because we are working in the hypercube $\{-1,1\}^m$, hence two vertices with $m/6$ different coordinates have $L^1$ distance $m/3$.
\end{proof}

Let $\sS \subseteq \{-1, 1\}^{m}$ be as in \thmref{gilbert-varshamov},
so that $|\sS| \ge 2^{m/5}$ and for any distinct
$s, \tilde{s} \in \sS$, $\norm{s - \tilde{s}}_1 \ge m/3$.  Let
$\delta > 0$ be a real number to be specified later. Enumerate the
edges of $G$ from $1$ to $m$, and for $s \in \sS$, set
$\Sigma(s)^{-1}$ to be the $d \times d$ matrix with entries
\[
\left(\Sigma(s)^{-1}\right)_{i j} = \left\{\begin{array}{ll}
1 & \mbox{if $i = j$,} \\
0 & \mbox{if $i \neq j$ and $\{i, j\} \not\in E$,} \\
\delta s_{\{i, j\}} & \mbox{if $i \neq j$ and $\{i, j\} \in E$.}
\end{array} \right.
\]
In other words, $\Sigma(s)^{-1}$ is symmetric with all ones on its
diagonal, $\pm \delta$ everywhere along the nonzero entries of the
adjacency matrix of $G$ according to the signs in $s$, and $0$
elsewhere.
\begin{lem}\lemlabel{psd}
	Suppose that $\delta^2 m \le 1/8$. Then, for any $s \in \sS$, the
	matrix $\Sigma(s)^{-1}$ is positive definite and its eigenvalues lie in 
	$[1/2,3/2]$.
\end{lem}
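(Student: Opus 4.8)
The plan is to write $\Sigma(s)^{-1} = I + \delta M$, where $M = M(s)$ is the symmetric $d \times d$ matrix with $M_{ij} = s_{\{i,j\}}$ when $\{i,j\} \in E$ and $M_{ij} = 0$ otherwise (in particular $M_{ii} = 0$). Since $M$ is real symmetric, it has real eigenvalues $\lambda_1(M) \ge \dots \ge \lambda_d(M)$, and the eigenvalues of $\Sigma(s)^{-1}$ are exactly $1 + \delta \lambda_i(M)$ for $1 \le i \le d$. So it suffices to show $|\delta \lambda_i(M)| \le 1/2$ for every $i$, which then forces every eigenvalue of $\Sigma(s)^{-1}$ into $[1/2, 3/2]$; positive definiteness follows since these are all strictly positive and $\Sigma(s)^{-1}$ is symmetric.

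For the eigenvalue bound, I would control the spectral norm of $M$ by its Frobenius norm using the inequality $\|M\| \le \|M\|_F$ recalled in \secref{prelim}. Each edge $\{i,j\} \in E$ contributes exactly two entries of $M$, namely $M_{ij}$ and $M_{ji}$, each equal to $\pm 1$, and all other entries vanish; hence $\|M\|_F^2 = \sum_{i,j=1}^d M_{ij}^2 = 2m$, so $\|M\| \le \sqrt{2m}$. Consequently $\max_i |\lambda_i(M)| = \|M\| \le \sqrt{2m}$, and therefore $|\delta \lambda_i(M)| \le \delta \sqrt{2m} = \sqrt{2 \delta^2 m} \le \sqrt{2 \cdot (1/8)} = 1/2$, using the hypothesis $\delta^2 m \le 1/8$. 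This is exactly the bound needed, completing the argument.

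There is essentially no real obstacle here: the only thing to get right is the bookkeeping that the off-diagonal $\pm 1$ entries come in symmetric pairs so that $\|M\|_F^2 = 2m$ rather than $m$, and to observe that the hypothesis $\delta^2 m \le 1/8$ is tuned precisely so that $\delta \|M\| \le 1/2$. One could alternatively bound $\|M\|$ by Gershgorin's circle theorem, but that would give a weaker (degree-dependent) bound, so the Frobenius-norm route is the right one. No properties of the Gilbert--Varshamov set $\sS$ are used in this lemma beyond $s \in \{-1,1\}^m$; the separation property of $\sS$ will only matter later when applying \lemref{fano-lower}.
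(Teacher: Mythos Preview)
Your proof is correct and essentially identical to the paper's own argument: write $\Sigma(s)^{-1} = I + \Delta$ (your $\Delta = \delta M$), bound $|\lambda_i(\Delta)| \le \|\Delta\| \le \|\Delta\|_F = \sqrt{2\delta^2 m} \le 1/2$, and conclude. The only cosmetic difference is that you factor out $\delta$ explicitly, and your closing remarks about Gershgorin and the irrelevance of the separation property of $\sS$ are accurate side observations.
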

\begin{proof}
	Since $\Sigma(s)^{-1}$ is symmetric and real, all its eigenvalues
	are real. Write $\Sigma(s)^{-1} = I + \Delta$, so that
	$\lambda_i(\Sigma(s)^{-1}) = 1 + \lambda_i(\Delta)$. Observe that
	\begin{equation}
		|\lambda_i(\Delta)| \le \|\Delta\| \le \|\Delta\|_F \le \sqrt{2 \delta^2 m} \le 1/2 .
		\eqlabel{delta12}
	\end{equation}
	Then,
	$1/2 \leq \lambda_i(\Sigma(s)^{-1}) \leq 3/2$ for every $1 \le i \le d$, and so
	$\Sigma(s)^{-1}$ is positive definite.
\end{proof}
We will assume from now on that $\delta^2 m \le 1/8$.  In light of
\lemref{psd}, $\Sigma(s)^{-1}$ is positive definite, so it is
invertible, and we let $\Sigma(s)$ denote its inverse.  Since we will
always take the mean to be $0$, we will write $f_{\Sigma}$ for
$f_{0, \Sigma}$ from now on.  We define the set
$\sW = \{\Sigma(s) \colon s \in \sS\}$ of covariance matrices, and let
\[
\sF = \{f_{\Sigma} \colon \Sigma \in \sW\} .
\]
In order to prove \propref{normal-lower-bound} via
\lemref{fano-lower}, it suffices to exhibit upper bounds on the
KL-divergence between any two densities in $\sF$, and lower bounds on
their $L^1$-distances.
\begin{lem}\lemlabel{normal-kl}
	For any
	$\Sigma, \tilde{\Sigma} \in \sP_d$ satisfying
	$\max\{\|\Sigma^{-1} - I\|_F, \|\tilde{\Sigma}^{-1} - I\|_F\} \le 1/2$,
	\[
	\infdiv{f_{\Sigma}}{f_{\tilde{\Sigma}}} \le 2
	\|\tilde{\Sigma}^{-1} - \Sigma^{-1}\|_F^2.
	\]
\end{lem}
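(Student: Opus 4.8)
The plan is to use the closed-form expression for the KL divergence between two zero-mean Gaussians and then bound the resulting quantity in terms of the Frobenius norm of $\tilde\Sigma^{-1}-\Sigma^{-1}$. Recall that for zero-mean normals,
\[
\infdiv{f_\Sigma}{f_{\tilde\Sigma}} = \frac{1}{2}\left( \tr(\tilde\Sigma^{-1}\Sigma) - d + \log\frac{\det\tilde\Sigma}{\det\Sigma} \right).
\]
First I would rewrite this in terms of the difference matrix. Setting $\Delta = \tilde\Sigma^{-1} - \Sigma^{-1}$, the trace term becomes $\tr(\tilde\Sigma^{-1}\Sigma) - d = \tr((\Sigma^{-1}+\Delta)\Sigma) - d = \tr(\Delta\Sigma)$, and the log-det term becomes $\log\det(\tilde\Sigma^{-1}\Sigma) = \log\det(I + \Delta\Sigma) = \sum_i \log(1+\lambda_i(\Delta\Sigma))$ — note $\Delta\Sigma$ has real eigenvalues since it is similar to the symmetric matrix $\Sigma^{1/2}\Delta\Sigma^{1/2}$. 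So the KL divergence equals $\tfrac12\sum_i\big(\lambda_i(\Delta\Sigma) - \log(1+\lambda_i(\Delta\Sigma))\big)$.

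Next I would control the eigenvalues $\lambda_i(\Delta\Sigma)$. The hypothesis $\|\Sigma^{-1}-I\|_F\le 1/2$ forces (exactly as in \eqref{delta12} and \lemref{psd}) that every eigenvalue of $\Sigma^{-1}$ lies in $[1/2,3/2]$, hence every eigenvalue of $\Sigma$ lies in $[2/3,2]$, so $\|\Sigma\|\le 2$. Combined with $\|\Delta\|\le\|\Delta\|_F\le 1$ (from the triangle inequality on the two hypotheses), this gives $|\lambda_i(\Delta\Sigma)| \le \|\Delta\Sigma\| \le \|\Delta\|\,\|\Sigma\|\le 2$. Then I apply the elementary scalar inequality $t - \log(1+t) \le t^2$ valid for all $t \ge -1/2$ (or some such constant range — one verifies this on $[-1/2, 2]$, say, since $t-\log(1+t)$ is convex and one checks the endpoints), to obtain
\[
\infdiv{f_\Sigma}{f_{\tilde\Sigma}} \le \frac12 \sum_i \lambda_i(\Delta\Sigma)^2 = \frac12 \tr((\Delta\Sigma)^2) = \frac12\|\Delta\Sigma\|_F^2,
\]
where the last step uses that $\Delta\Sigma$ is similar to the symmetric matrix $\Sigma^{1/2}\Delta\Sigma^{1/2}$, so $\tr((\Delta\Sigma)^2)=\sum_i\lambda_i(\Delta\Sigma)^2 = \|\Sigma^{1/2}\Delta\Sigma^{1/2}\|_F^2$, and actually it is cleaner to bound $\sum_i\lambda_i(\Delta\Sigma)^2 \le \|\Delta\Sigma\|_F^2$ directly. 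Finally, by \eqref{frobeqs} (the submultiplicativity of the Frobenius norm under the spectral norm), $\|\Delta\Sigma\|_F \le \sigma_1(\Sigma)\|\Delta\|_F = \|\Sigma\|\,\|\Delta\|_F \le 2\|\Delta\|_F$, giving $\infdiv{f_\Sigma}{f_{\tilde\Sigma}} \le \tfrac12\cdot 4\|\Delta\|_F^2 = 2\|\Delta\|_F^2$, which is the claim.

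The only mildly delicate point — the "main obstacle," such as it is — is getting the constants to line up: one must make sure the eigenvalue bounds derived from $\|\Sigma^{-1}-I\|_F \le 1/2$ keep $\lambda_i(\Delta\Sigma)$ in the range where the quadratic bound $t-\log(1+t)\le t^2$ holds, and then the factor-of-$2$ in $\|\Delta\Sigma\|_F\le\|\Sigma\|\,\|\Delta\|_F$ together with the $\tfrac12$ prefactor must combine to give exactly the stated constant $2$. Everything else is routine linear algebra using the facts recalled in \secref{prelim}.
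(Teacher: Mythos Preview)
Your approach is different from the paper's and essentially works, but there is one genuine gap in the eigenvalue step that you yourself flag as the ``main obstacle'' without actually resolving. You derive $|\lambda_i(\Delta\Sigma)|\le\|\Delta\|\,\|\Sigma\|\le 2$, which only places the eigenvalues in $[-2,2]$, yet you then apply the scalar inequality $t-\log(1+t)\le t^2$ on the asserted range $[-1/2,2]$. These do not match: the scalar inequality \emph{fails} for $t$ around $-0.7$ and below, so the bound $\lambda_i\ge -2$ is useless here. The fix is easy: since $I+\Delta\Sigma=\tilde\Sigma^{-1}\Sigma$ is similar to the positive definite matrix $\Sigma^{1/2}\tilde\Sigma^{-1}\Sigma^{1/2}$, its smallest eigenvalue is at least $\lambda_d(\tilde\Sigma^{-1})\lambda_d(\Sigma)\ge(1/2)(2/3)=1/3$, so $\lambda_i(\Delta\Sigma)\ge -2/3$. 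One then checks directly that $t-\log(1+t)\le t^2$ holds on $[-2/3,2]$ (at $t=-2/3$ the two sides are $\ln 3-2/3\approx 0.432$ and $4/9\approx 0.444$), and your argument goes through with the stated constant~$2$. (There is also a harmless sign slip in your description of the log-det term---it should be $-\log\det(\tilde\Sigma^{-1}\Sigma)$---but your final eigenvalue formula is correct.)

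For comparison, the paper avoids this eigenvalue-range issue entirely by bounding the symmetrized (Jeffreys) divergence $\mathrm{J}(f_\Sigma\|f_{\tilde\Sigma})=\tfrac12\tr\big((\Sigma-\tilde\Sigma)(\tilde\Sigma^{-1}-\Sigma^{-1})\big)$ instead. A single Cauchy--Schwarz gives $\mathrm{J}\le\tfrac12\|\Sigma-\tilde\Sigma\|_F\|\tilde\Sigma^{-1}-\Sigma^{-1}\|_F$, and then the identity $\Sigma-\tilde\Sigma=\Sigma(\tilde\Sigma^{-1}-\Sigma^{-1})\tilde\Sigma$ together with $\|\Sigma\|,\|\tilde\Sigma\|\le 2$ yields the result with no scalar inequality and no eigenvalue localization needed. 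Your route is more direct (it bounds the one-sided KL itself rather than the symmetrization) but is more delicate in that the constants must conspire just right; the paper's route is slicker but gives up a factor by passing through the Jeffreys divergence.
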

\begin{proof}
	We consider a symmetrized KL-divergence, often
	called the \emph{Jeffreys divergence}~\cite{kullback-book},
	\[
	\sinfdiv{f_{\Sigma}}{f_{\tilde{\Sigma}}} = \infdiv{f_{\Sigma}}{f_{\tilde{\Sigma}}} + \infdiv{f_{\tilde{\Sigma}}}{f_\Sigma} ,
	\]
	which clearly serves as an upper bound on the quantity of
	interest. It is well known that
	\[
	\sinfdiv{f_\Sigma}{f_{\tilde{\Sigma}}} =  \tr((\Sigma - \tilde{\Sigma}) (\tilde\Sigma^{-1} - {\Sigma}^{-1}))/2 ,
	\]
	e.g.,\ by \cite[Section 9.1]{kullback-book}. Since $\Sigma - \tilde{\Sigma}$ is symmetric, the inequality $\tr(A\transpose B) \leq \|A\|_F \cdot \|B\|_F$, which is the 
	Cauchy-Schwarz inequality for the inner product 
	$\langle A, B \rangle = \tr(A\transpose B)$, gives
	\[
	\sinfdiv{f_{\Sigma}}{f_{\tilde{\Sigma}}} \le \norm{\tilde{\Sigma}^{-1} - \Sigma^{-1}}_F \norm{\Sigma - \tilde{\Sigma}}_F/2 .
	\]
	Notice now that
	$\Sigma - \tilde{\Sigma} = \Sigma (\tilde{\Sigma}^{-1} -
	\Sigma^{-1}) \tilde{\Sigma}$, so by~(\ref{frobeqs}),
	\begin{align*}
		\norm{\Sigma - \tilde{\Sigma}}_F &= \norm{\Sigma (\tilde{\Sigma}^{-1} - \Sigma^{-1}) \tilde{\Sigma}}_F\le \norm{\Sigma} \cdot \norm{\tilde{\Sigma}} \cdot \norm{\tilde{\Sigma}^{-1} - \Sigma^{-1}}_F,
	\end{align*}
	so that
	\[
	\sinfdiv{f_\Sigma}{f_{\tilde{\Sigma}}} \le \norm{\Sigma}\cdot
	\norm{\tilde{\Sigma}} \cdot \norm{\tilde{\Sigma}^{-1} -
		\Sigma^{-1}}_F^2 /2 .
	\]
	Note that	since $\Sigma$ is symmetric positive definite, we have
$\norm{\Sigma} = \lambda_1(\Sigma)$.
	Write $\Sigma^{-1} = I + \Delta$  as in the proof of \lemref{psd}, so the eigenvalues of $\Sigma^{-1}$ lie in $[1- \|\Delta\|, 1+ \|\Delta\|]$.
	Therefore, 
	\[
	\norm{\Sigma} = \lambda_1(\Sigma) = \frac{1}{\lambda_d(\Sigma^{-1})} \le \frac{1}{1 - \|\Delta\|} \le \frac{1}{1 - \|\Delta\|_F} \le \frac{1}{1 - 1/2}=2 ,
	\]
	and the same bound holds for $\norm{\tilde{\Sigma}}$, whence $\sinfdiv{f_{\Sigma}}{f_{\tilde{\Sigma}}} \le 2 \|\tilde{\Sigma}^{-1} - \Sigma^{-1}\|_F^2$.
\end{proof}

To use \lemref{fano-lower} we need a lower bound on 
$\TV(f_{\Sigma}, f_{\tSigma})$. We use the following bound, which is tight up to the constant factor.

\begin{lem}[\protect{\cite[Theorem~1.1]{we_tv_gaussians}}, see also \protect{\cite[Corollary~2]{ulyanov}}]\lemlabel{tvlowerbound}
	For any pair $\Sigma,\tSigma\in\mathcal{P}_d$, we have
	\[
	\TV(f_{\Sigma}, f_{\tSigma})
	\geq 
	\min \{1,
	\| \Sigma^{1/2}\tSigma^{-1}\Sigma^{1/2} - I\|_F
	\} / 100.
	\]
\end{lem}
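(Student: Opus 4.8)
The plan is to reduce to an essentially one-parameter family and then run a direct likelihood-ratio computation. First I would use the affine invariance of total variation: the invertible map $x \mapsto \Sigma^{-1/2}x$ sends $f_\Sigma$ to $f_I$ and $f_{\tSigma}$ to $f_{\tSigma'}$ with $\tSigma' := \Sigma^{-1/2}\tSigma\Sigma^{-1/2}$, while $\|\Sigma^{1/2}\tSigma^{-1}\Sigma^{1/2} - I\|_F = \|(\tSigma')^{-1} - I\|_F$; a further rotation makes $\tSigma'$ diagonal, so write $(\tSigma')^{-1} = \operatorname{diag}(1+b_1,\dots,1+b_d)$ with each $b_i > -1$. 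It then suffices to prove, for $P := f_I$ and $Q := f_{\tSigma'}$ --- both products of univariate centred normals --- and $F := (\sum_i b_i^2)^{1/2}$, that $\TV(P,Q) \ge \min\{1,F\}/100$. The log-likelihood ratio is the quadratic form
\[
L(x) = \log\frac{\dif P}{\dif Q}(x) = \frac12\sum_{i=1}^d\bigl(b_i x_i^2 - \log(1+b_i)\bigr),
\]
and I would combine the identity $\TV(P,Q) = \tfrac12\E_P|1-e^{-L}|$ with the elementary inequality $|1-e^{-t}| \ge \tfrac12\min\{|t|,1\}$ (valid for all $t\in\R$) to obtain $\TV(P,Q) \ge \tfrac14\,\E_P[\min\{|L|,1\}]$. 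Everything then reduces to showing that, under $P$, $L$ is spread out at scale $\min\{1,F\}$.

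Under $P$ one has $L = Y + \mu$, where $Y := \tfrac12\sum_i b_i(Z_i^2-1)$ with $Z_i$ i.i.d.\ standard normal and $\mu := \E_P[L] = \infdiv{P}{Q} = \tfrac12\sum_i\bigl(b_i - \log(1+b_i)\bigr) \ge 0$. Routine moment computations give $\operatorname{Var}_P(L) = \E[Y^2] = \tfrac12 F^2$ and the key bound $\E[Y^4] \le C\,(\E[Y^2])^2$ for an absolute constant $C$, valid uniformly in $(b_i)$ since $Y$ is a sum of independent centred variables (a concrete instance of the equivalence of moments on the second Wiener chaos). In the regime $F \le F_0$ (a small absolute constant), every $|b_i| \le F_0$, so $0 \le b_i - \log(1+b_i) \le b_i^2$ and hence $0 \le \mu \le \tfrac12 F^2$ is second order; Paley--Zygmund applied to $Y^2$ via the fourth-moment bound gives $\Pr(|Y| \ge \tfrac12\|Y\|_2) \ge c_0$ for an absolute $c_0>0$, and combining this with $\min\{|L|,1\} \ge \min\{|Y|,1\} - \min\{\mu,1\}$ and $\|Y\|_2 = F/\sqrt2$ yields $\E_P[\min\{|L|,1\}] \ge c_1 F$, whence $\TV(P,Q) \ge \tfrac{c_1}{4}\min\{1,F\}$.

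For the complementary regime $F > F_0$ I would show $\TV(P,Q) \ge c_2 > 0$ by a two-way split. If some $|b_i|$ is bounded away from $0$, the data-processing inequality applied to the projection onto coordinate $i$ gives $\TV(P,Q) \ge \TV\bigl(\sN(0,1),\sN(0,(1+b_i)^{-1})\bigr)$, which is bounded below by an absolute constant whenever $|b_i|$ is bounded away from $0$. Otherwise every $|b_i|$ is small while $\sum_i b_i^2 = F^2 \ge F_0^2$; here I would tensorise the Hellinger affinity $\rho(P,Q) = \int\sqrt{\dif P\,\dif Q} = \prod_i \rho_i$, note that $1-\rho_i = \Theta(b_i^2)$ for small $|b_i|$, so $1 - \rho(P,Q) \ge 1 - \exp\bigl(-\sum_i(1-\rho_i)\bigr) \ge 1 - \exp(-c_3 F_0^2) > 0$, and conclude via the standard bound $\TV(P,Q) \ge 1 - \rho(P,Q)$. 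Assembling the two regimes and optimising the absolute constants is what delivers the stated factor $1/100$.

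The hard part is the small-$F$ regime: one must show that the quadratic form $Y = \tfrac12\sum_i b_i(Z_i^2-1)$ anticoncentrates at its natural scale $\|Y\|_2 \asymp F$ \emph{uniformly} over the coefficient vector $(b_i)$ --- the two extremes being a single dominant coefficient (where $Y$ is essentially a shifted $\chi^2_1$ and one checks $\TV(\chi^2_1,(1+t)\chi^2_1) \asymp t$ by hand) and balanced coefficients (where $Y$ is approximately Gaussian by the central limit theorem). The uniform fourth-moment estimate $\E[Y^4] \le C(\E[Y^2])^2$, which follows by a short calculation since $Y$ is a sum of independent terms, is precisely what covers both extremes at once. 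A secondary subtlety is that the loss $-\min\{\mu,1\}$ in the bound for $\min\{|L|,1\}$ is affordable only because $\mu = \infdiv{P}{Q} = \Theta(F^2)$ is genuinely quadratic in $F$, which in turn uses that $P$ and $Q$ differ only in covariance and not in mean.
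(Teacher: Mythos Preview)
The paper does not prove this lemma: it is quoted verbatim from an external reference (\cite[Theorem~1.1]{we_tv_gaussians}, with \cite[Corollary~2]{ulyanov} as an alternative source), so there is no in-paper proof to compare against. That said, your sketch is a correct self-contained argument and is in the spirit of the cited paper. The affine reduction to a diagonal $(\tSigma')^{-1}=\operatorname{diag}(1+b_i)$ is standard and exactly right; the identity $\TV(P,Q)=\tfrac12\E_P|1-e^{-L}|$ combined with $|1-e^{-t}|\ge\tfrac12\min\{|t|,1\}$ is valid; and under $P$ the decomposition $L=Y+\mu$ with $Y=\tfrac12\sum_i b_i(Z_i^2-1)$, $\E[Y^2]=\tfrac12 F^2$, and the fourth-moment bound $\E[Y^4]\le 15(\E[Y^2])^2$ (from $\E[(Z^2-1)^4]=60$ and independence) are all correct, so Paley--Zygmund gives the needed anticoncentration. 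The Lipschitz step $\min\{|Y+\mu|,1\}\ge\min\{|Y|,1\}-\mu$ is fine in the small-$F$ regime where $\mu\le\tfrac12 F^2<1$. The large-$F$ dichotomy (one large $|b_i|$ via data processing, or many small $|b_i|$ via tensorised Hellinger affinity and $\TV\ge 1-\rho$) is also sound.

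Two minor points worth tightening if you write this out in full: first, in the large-$F$ case you should make the threshold $\delta$ on $\max_i|b_i|$ explicit and check that the one-dimensional bound $\TV\bigl(\sN(0,1),\sN(0,(1+b)^{-1})\bigr)\ge c(\delta)$ holds uniformly over $|b|\ge\delta$ (including $b\to\infty$ and $b\to -1^+$, both of which send the TV to~$1$); second, the claim that ``optimising the absolute constants'' delivers exactly $1/100$ is not substantiated by your sketch --- tracking the constants through Paley--Zygmund with $C=15$ and the factor $\tfrac14$ from $|1-e^{-t}|$ gives something noticeably smaller than $1/100$, so you would either need sharper inequalities in the small-$F$ regime or to accept a worse absolute constant. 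Since the lemma is used in the paper only to get \emph{some} constant in Proposition~3.2, a weaker constant would suffice for the paper's purposes.
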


\begin{proof}[Proof of \propref{normal-lower-bound}]
We will use \lemref{fano-lower}.
Set $\delta = c_2/\sqrt{n}$ for a sufficiently small constant $c_2$.	
Since $n\geq c_1m$ by assumption, by choosing $c_2\leq c_1/2\sqrt2$ we can be sure that $\delta\sqrt{2m}\leq1/2$.

Note that for any $\Sigma\in\mathcal{W}$, we have
	$\|\Sigma^{-1}-I\|_F = \delta\sqrt {2m}\leq1/2$. On the one hand, 
	\lemref{normal-kl} gives that for any $\Sigma,\tSigma\in\mathcal{W}$, we have
	\[
	\infdiv{f_{\Sigma}}{f_{\tilde{\Sigma}}} \le 2
	\|\tilde{\Sigma}^{-1} - \Sigma^{-1}\|_F^2 \leq 8 m \delta^2,
	\]
	where we have used the triangle inequality for the second inequality.
	On the other hand, applying the left inequality in (\ref{frobeqs}) twice,
	\[
	\| \Sigma^{1/2}\tSigma^{-1}\Sigma^{1/2} - I\|_F
	=
	\| \Sigma^{1/2} (\tSigma^{-1}-\Sigma^{-1})
	\Sigma^{1/2} \|_F
	\geq 
	\sigma_d(\Sigma^{1/2})^2
	\|\tSigma^{-1}-\Sigma^{-1}\|_F.
	\]
	Recall that $\sigma_d(\Sigma^{1/2})$ denotes the smallest singular value of $\Sigma^{1/2}$.
	By \lemref{psd}, the eigenvalues of $\Sigma^{1/2}$ lie in $[\sqrt{2/3},\sqrt{2}]$, and since $\Sigma^{1/2}$ is symmetric and positive definite, its singular values coincide with its eigenvalues, hence $\sigma_d(\Sigma^{1/2})^2\geq 2/3.$
	Also note that 
	$\|\tSigma^{-1}-\Sigma^{-1}\|_F\geq\delta \sqrt{m/3}$ since 
	$\tSigma^{-1}$ and $\Sigma^{-1}$ differ in at least $m/3$ entries.
	Therefore, \lemref{tvlowerbound} gives 
	\begin{align*}
		\TV(f_{\Sigma}, f_{\tSigma})
		\geq
		c_3 \min \{1,
		\delta\sqrt{m}
		\} = c_3 \delta \sqrt{m}.
	\end{align*}

	Thus, in the notation of \lemref{fano-lower}, we have by
	\thmref{gilbert-varshamov} and
	the above discussion that for some $c_3> 0$,
	\[
	|\sF| \ge 2^{m/5}, \quad \alpha \ge c_3 \delta \sqrt{m}, \quad \beta \le 8 \delta^2 m.
	\]
	Recall that $\delta = c_2/\sqrt{n}$ for a sufficiently small constant $c_2$. By choosing $c_2$ small enough, we can be sure that
	\[
	1 - \frac{n \beta + \log 2}{\log |\sF|} \ge \frac{1}{2} .
	\]
	Then, by \lemref{fano-lower},
	$\sR_n(\sF_G) \ge \alpha/8 \ge (c_2 c_3/8) \sqrt{m/n}$, completing the proof.
\end{proof}

\section{Learning Ising graphical models}\seclabel{ising}
The \emph{Ising model} describes a probability distribution on the
binary hypercube $\{-1, 1\}^d$ for some $d \ge 1$, where any particular
vector $x \in \{-1, 1\}^d$ is called a \emph{configuration}.  One such
distribution is parameterized by a graph $G = ([d], E)$ with a set of
edge weights $w_{ij} \in \R$ for every edge $\{i, j\} \in E$ called
\emph{interactions}, and some weights $h_i \in \R$ for $1 \le i \le d$
called the \emph{external magnetic field}. These parameters define the
\emph{Hamiltonian} $H \colon \{-1, 1\}^d \to \R$,
\[
H(x) = \sum_{\{i, j\} \in E} w_{ij} x_i x_j + \sum_{i = 1}^d h_i x_i.
\]
Any configuration $x \in \{-1, 1\}^d$ then appears with probability
proportional to $\exp\{H(x)\}$. In fact, we can write
$H(x) = H_{h, W}(x) = x\transpose W x + h\transpose x$ for a vector
$h \in \R^d$ and a matrix $W \in \sM_G$, where
\[
\sM_G = \Big\{W \in \R^{d \times d} \colon \text{ if } \{i, j\} \not\in E \text{ with } i \neq j \in [d], \text{ then } W_{ij} = 0 \Big\} ,
\]
and in particular,
\[
W_{ij} = \left\{ \begin{array}{ll}
0 & \mbox{if $\{i, j\} \not\in E$,} \\
w_{ij}/2 & \mbox{if $\{i, j\} \in E$.}
\end{array} \right.
\]
The probability mass function of the Ising model with interactions $W$
and external magnetic field $h$ is denoted by $f_{h, W}$, where
\begin{align}
	f_{h, W}(x) = \frac{e^{H_{h, W}(x)}}{Z(h, W)} , \eqlabel{gibbs}
\end{align}
where the normalizing factor $Z(h, W)$ is called the \emph{partition
	function}, defined by
\[
Z(h, W) = \sum_{x \in \{-1, 1\}^d} e^{H_{h, W}(x)} .
\]
Probability distributions whose densities have the form \eqref{gibbs}
for general Hamiltonians are known as \emph{Gibbs distributions} or
\emph{Boltzmann distributions}.

Given a graph $G$, let $\sI_G$ be the class of all Ising
models with interactions in $\sM_G$, namely,
\[
\sI_G = \left\{ f_{h, W} \colon h\in \R^d, \, W \in \sM_G \right\} ,
\]
and let $\sI'_G$ be the subclass with no external magnetic field, namely,
\[
\sI'_G = \left\{ f_{0, W} \colon W \in \sM_G \right\} .
\]
As in \secref{normal}, $\sI_G$ is the class of all $d$-dimensional
Ising models whose components form a graphical model with respect to
$G$, and similarly for $\sI'_G$.

We omit detailed proofs of the upper bounds in
\thmref{ising-main-bound}, since they are virtually identical to that
of \thmref{normal-main-bound} as given in \secref{normal-upper}.  For
$\sI_G$, the corresponding vector space has the basis
\[
\{1\} \cup \{x_i x_j \colon \{i, j\} \in E\} \cup \{x_i \colon i\in[d]\} ,
\]
with $m + d + 1$ elements, while for $\sI'_G$, the corresponding
vector space does not have the last $d$ basis vectors, so it has
dimension $m + 1$. In the case that $m = 0$, the class $\sI'_G$
contains only one distribution (the uniform distribution on
$\{-1, 1\}^d$), and thus $\sR_n(\sI'_G) = 0$. Thus for any
$m\geq0$ and any $G$ with $m$ edges, $\sR_n(\sI'_G) \le c \sqrt{ m/n}$
for some constant $c > 0$.

\subsection{Proof of the lower bound in \thmref{ising-main-bound}~\ref{ising-2}}\seclabel{ising-construction}
Since  our Ising models in this section will have no external
magnetic field, we write $f_{W}$ for $f_{0, W}$, $H_W$ for $H_{0, W}$,
and $Z(W)$ for $Z(0, W)$.  As in \secref{normal-lower}, the lower
bound in \thmref{ising-main-bound}~\ref{ising-2} follows from the
following proposition.
\begin{prop}\proplabel{ising-lower-bound}
	There exist $c_1, c_2 > 0$ such that for any graph $G = ([d], E)$
	with $m$ edges, where $n \ge c_1 m$,
	\[
	\sR_n(\sI'_G) \ge c_2 \sqrt{m/n} .
	\]
\end{prop}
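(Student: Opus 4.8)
The plan is to run Fano's lemma (\lemref{fano-lower}) on a finite subfamily of $\sI'_G$ built exactly as in \secref{normal-lower}. Take $\sS\subseteq\{-1,1\}^m$ from \thmref{gilbert-varshamov}, so $|\sS|\ge 2^{m/5}$ and distinct $s,\tilde s\in\sS$ differ in at least $m/6$ coordinates. Enumerate the edges of $G$ from $1$ to $m$, set $\delta=c_2/\sqrt n$, and for $s\in\sS$ let $W(s)\in\sM_G$ have $(i,j)$-entry $\delta s_{\{i,j\}}/2$ when $\{i,j\}\in E$ and $0$ otherwise, so that $H_{0,W(s)}(x)=\sum_{\{i,j\}\in E}\delta s_{\{i,j\}}x_ix_j$; put $\sF=\{f_{0,W(s)}:s\in\sS\}$. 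Since $n\ge c_1m$ we have $\delta\sqrt m=c_2\sqrt{m/n}\le c_2/\sqrt{c_1}$, so by choosing the constants suitably we may assume $\delta\sqrt m$ is below any prescribed absolute threshold; this ``high-temperature'' smallness drives every estimate below. (We may also assume $m$ exceeds an absolute constant, since for bounded $m$ the bound $\sR_n(\sI'_G)\ge c_2\sqrt{m/n}$ follows from a two-point Le~Cam argument applied to a one-edge subfamily, and the case $n<c_1m$ follows by monotonicity of $\sR_n$ as in the remark after \propref{normal-lower-bound}.) It then remains to verify the two hypotheses of \lemref{fano-lower}: $\sup_{s\neq\tilde s}\infdiv{f_{0,W(s)}}{f_{0,W(\tilde s)}}\le c_3\delta^2m$ and $\inf_{s\neq\tilde s}\norm{f_{0,W(s)}-f_{0,W(\tilde s)}}_1\ge c_4\delta\sqrt m$.

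For the KL bound I would view $\{f_{0,W}:W\in\sM_G\}$ as an exponential family with natural parameter $\theta=(w_{ij})_{\{i,j\}\in E}$, sufficient statistic $T(x)=(x_ix_j)_{\{i,j\}\in E}$, and log-partition function $A(\theta)=\log Z(0,W)$. Writing $u=\theta-\tilde\theta$ and $D=u\cdot T=\sum_e u_ex_ix_j$, Taylor's theorem for $A$ along the segment $[\theta,\tilde\theta]$ gives $\infdiv{f_\theta}{f_{\tilde\theta}}=\tfrac12\,u\transpose\nabla^2A(\xi)\,u=\tfrac12\,\V_\xi(D)$, where $\xi$ lies on that segment, $\V_\xi$ is variance under the Ising measure with weight vector $\xi$, and $\nabla^2A(\xi)$ is the covariance matrix of $T$ under that measure. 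Now $D$ and the Hamiltonian $H_\xi$ are multilinear polynomials of degree $2$ in the spins, and $\norm{H_\xi}_{L^2(\mathrm{unif})}=\norm{\xi}_2\le\delta\sqrt m$ is small; hence changing measure from $f_\xi$ to the uniform law on $\{-1,1\}^d$ and invoking the hypercontractivity (Bonami--Beckner) inequality on the cube yields $\E_{\mathrm{unif}}[e^{2H_\xi}]=O(1)$, $\E_{\mathrm{unif}}[e^{H_\xi}]\ge 1$, and $\E_{\mathrm{unif}}[D^4]\le 81(\E_{\mathrm{unif}}[D^2])^2=81\norm u_2^4$. Therefore $\V_\xi(D)\le\E_\xi[D^2]=\E_{\mathrm{unif}}[D^2e^{H_\xi}]/\E_{\mathrm{unif}}[e^{H_\xi}]\le\sqrt{\E_{\mathrm{unif}}[D^4]\,\E_{\mathrm{unif}}[e^{2H_\xi}]}=O(\norm u_2^2)=O(\delta^2m)$, since $u$ has at most $m$ nonzero entries, each $\pm2\delta$.

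The $L^1$ bound is the delicate step, and the obstacle is one of \emph{scale}: second-moment arguments (Hellinger, $\chi^2$) only certify $\norm{f_{0,W(s)}-f_{0,W(\tilde s)}}_1\gtrsim\norm u_2^2\asymp\delta^2m$ through the Bhattacharyya coefficient $\rho$ and the estimate $-\log\rho\asymp u\transpose\nabla^2A(\cdot)u$, whereas the optimal rate needs the \emph{larger} quantity $\norm u_2\asymp\delta\sqrt m$. To recover it, set $\gamma=\log\bigl(Z(0,W(s))/Z(0,W(\tilde s))\bigr)=A(\theta)-A(\tilde\theta)$; then $f_{0,W(s)}(x)-f_{0,W(\tilde s)}(x)=f_{0,W(s)}(x)\bigl(1-e^{-(D(x)-\gamma)}\bigr)$, and using $(1-e^{-t})_+\ge\tfrac12\min\{t_+,1\}$,
\[
\TV(f_{0,W(s)},f_{0,W(\tilde s)})=\E_{W(s)}\bigl[(1-e^{-(D-\gamma)})_+\bigr]\ge\tfrac12\,\E_{W(s)}\bigl[\min\{(D-\gamma)_+,1\}\bigr].
\]
Convexity of $A$ forces $\gamma$ to lie between $\E_{W(\tilde s)}[D]$ and $\mu:=\E_{W(s)}[D]$, and these differ by $\int_0^1\V_{\theta_t}(D)\,dt=\Theta(\norm u_2^2)=o(\norm u_2)$, so it suffices to prove the anti-concentration estimate $\E_{W(s)}|D-\mu|\gtrsim\norm u_2$, i.e.\ that $D$ does not concentrate below its standard deviation. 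I would get this from $\E|Y|\ge(\E Y^2)^{3/2}(\E Y^4)^{-1/2}$ applied to $Y=D-\mu$, given (a) the matching lower bound $\V_{W(s)}(D)\gtrsim\norm u_2^2$, from $e^{H_{W(s)}}\ge 1+H_{W(s)}$ in the same change of measure (modulo the easily controlled mean term $\E_{W(s)}[D]^2=O(\delta^2m\norm u_2^2)$), and (b) the fourth-moment bound $\E_{W(s)}[(D-\mu)^4]\lesssim\norm u_2^4$, again from hypercontractivity (control the eighth moment of the degree-$2$ function $D-\mu$ under the uniform law) and the change of measure. Routine truncation upgrades $\E_{W(s)}[(D-\gamma)_+]\gtrsim\norm u_2$ to $\E_{W(s)}[\min\{(D-\gamma)_+,1\}]\gtrsim\norm u_2$, giving $\TV\gtrsim\norm u_2\ge\delta\sqrt{2m/3}$.

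With $\alpha=c_4\delta\sqrt m$, $\beta=c_3\delta^2m$, and $|\sF|\ge 2^{m/5}$ in hand, \lemref{fano-lower} finishes the job: $n\beta=c_2^2c_3m$ and $\log|\sF|\ge(m/5)\log 2$, so taking $c_2$ small enough (and $m$ past an absolute constant) makes $(n\beta+\log 2)/\log|\sF|\le\tfrac12$, whence $\sR_n(\sI'_G)\ge\alpha/8=(c_2c_4/8)\sqrt{m/n}$. The step I expect to cost the most is the $L^1$ lower bound---specifically the fourth-moment (equivalently, hypercontractive) control of the log-likelihood-ratio statistic $D$ under $f_{0,W(s)}$: a variance bound alone cannot supply it, yet it is precisely what pins the $L^1$ distance at the correct scale $\delta\sqrt m$ rather than $\delta^2m$.
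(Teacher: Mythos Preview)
Your skeleton---Fano's lemma applied to the Gilbert--Varshamov packing $\sS$ with edge weights $\pm\delta$, $\delta\asymp 1/\sqrt n$---is exactly the paper's, and your argument is correct. The technical machinery you bring to the two hypotheses, however, differs from what the paper actually does.

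For the KL upper bound, the paper does not use the exponential-family Taylor remainder. Instead it bounds the symmetrized Jeffreys divergence directly (\lemref{ising-kl}): it writes $\sinfdiv{f_W}{f_{\tilde W}}$ as an expectation of $(e^{H_W}-e^{H_{\tilde W}})(H_W-H_{\tilde W})$ plus partition-function corrections, expands via the elementary two-sided bound $1+t\le e^t\le 1+t+\tfrac{t^2}{2}\max\{e^t,1\}$, and controls all moments of $X\transpose W X$ under the \emph{uniform} law using the Hanson--Wright inequality (\thmref{hanson-wright}, packaged as \lemref{form-utils}). Your hypercontractive bound $\E_{\mathrm{unif}}[D^4]\le 81\,\|u\|_2^4$ plays the same role as the paper's \lemref{form-utils}\ref{k-moment}, and your MGF bound $\E_{\mathrm{unif}}[e^{2H_\xi}]=O(1)$ is the paper's \lemref{form-utils}\ref{exp-moment}; these are interchangeable.

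For the $L^1$ lower bound, the paper's route (\lemref{ising-l1}) is shorter than yours. It never passes to the Ising measure $f_{W(s)}$: it stays under the uniform law throughout, writing $\|f_W-f_{\tilde W}\|_1\ge \E_{\mathrm{unif}}|e^{H_W}-e^{H_{\tilde W}}|-O(\|W\|_F^2+\|\tilde W\|_F^2)$ after absorbing the partition functions, then uses the same elementary exponential expansion to reduce to $\E_{\mathrm{unif}}|H_W-H_{\tilde W}|$ minus quadratic error. Since $H_W-H_{\tilde W}$ already has mean zero under the uniform law, the H\"older trick $\E|Y|\ge(\E Y^2)^{3/2}(\E Y^4)^{-1/2}$ applies immediately---no need to introduce or control the quantities $\mu$ and $\gamma$ at all. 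Your approach reaches the same H\"older inequality but only after the detour through the tilted measure, the log-partition shift $\gamma$, and the truncation step; all of that is avoidable. On the other hand, your exponential-family viewpoint and hypercontractivity are arguably more structural, and would be the natural tools if one tried to push toward the higher-order forms discussed in \secref{conclusion}.
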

We appeal to \lemref{fano-lower} again. The construction and proof
techniques are very similar to the previous section. Indeed, let
$\sS \subseteq \{-1, 1\}^{m}$ be a set of sign vectors as in
\thmref{gilbert-varshamov}, satisfying $|\sS| \ge 2^{m/5}$ and for any
distinct $s, \tilde{s} \in \sS$, $\norm{s - \tilde{s}}_1 \ge m/3$. 
Enumerate the
edges of $G$ from $1$ to $m$, and for
$s \in \sS$, define the zero-diagonal symmetric matrix
$W(s) \in \sM_G$ with entries
\[
W(s)_{ij} = \left\{\begin{array}{ll}
0 & \mbox{if $i=j$ or $\{i, j\} \not\in E$,} \\
\delta s_{\{i, j\}} & \mbox{if $\{i, j\} \in E$.}
\end{array} \right.
\]
Then let $\sW = \{ W(s) \colon s \in \sS \}$ be a set of interactions,
and $\sI = \{f_W \colon W \in \sW\}$ be the finite class of Ising
models with interactions from $\sW$.

Now, to control the $L^1$-distance and KL-divergence between
distributions in $\sI$, a few intermediate computations are
necessary. 
First, we recall some properties of sub-gaussian random
variables.
The \emph{sub-gaussian norm} of a random variable $X$ is defined to be
\[
\|X\|_{\psi_2} = \inf\left\{ t > 0 \colon \E\{e^{(X/t)^2}\} \le 2 \right\} .
\]
A random variable $X$ is called \emph{sub-gaussian} if
$\|X\|_{\psi_2} < \infty$
(see, e.g., \cite[Section~2.5]{vershynin-book}). Observe in particular that 
any bounded random variable is sub-gaussian. Recall now the following
concentration inequality for quadratic forms of
sub-gaussian random vectors.
\begin{thm}[Hanson-Wright inequality 
	\protect{\cite[Theorem 6.2.1]{vershynin-book}},
	see also \protect{\cite[Example~2.12]{gabor-concentration}}]\thmlabel{hanson-wright}
	Let $X = (X_1, \dots, X_d)$ be a random vector with independent
	zero-mean components satisfying
	$\max_{1 \le i \le d} \|X_i\|_{\psi_2} \le K$, and let
	$W \in \R^{d \times d}$. Then, for every $t \ge 0$,
	\[
	\Pr\Big\{ |X\transpose W X - \E X\transpose W X| > t \Big\} \le 2
	\exp\left\{ - C \min \left\{ \frac{t^2}{K^4 \|W\|_F^2},
	\frac{t}{K^2 \|W\|} \right\}\right\} ,
	\]
	for some universal constant $C > 0$.
\end{thm}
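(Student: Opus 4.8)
The plan is to follow the classical Rudelson--Vershynin argument, separating $X\transpose W X - \E X\transpose W X$ into its diagonal and off-diagonal parts. Rescaling $X$ to $X/K$ reduces to the case $K = 1$, so assume $\max_i\|X_i\|_{\psi_2}\le 1$. Write $W = \Lambda + W'$, where $\Lambda = \mathrm{diag}(W_{11},\dots,W_{dd})$ and $W'$ has zero diagonal; then
\[
X\transpose W X - \E X\transpose W X = \sum_{i=1}^d W_{ii}\,(X_i^2 - \E X_i^2) \;+\; X\transpose W' X ,
\]
and since $\sum_i W_{ii}^2 \le \|W\|_F^2$, $\max_i|W_{ii}| = \|\Lambda\| \le \|W\|$, $\|W'\|_F \le \|W\|_F$, and $\|W'\| = \|W-\Lambda\| \le 2\|W\|$, it suffices to bound each of the two terms by $2\exp(-c\min\{t^2/\|W\|_F^2,\, t/\|W\|\})$ and then union-bound over $\{|\cdot| > t/2\}$. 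For the diagonal term, each $X_i^2 - \E X_i^2$ is centered with sub-exponential norm $\lesssim\|X_i\|_{\psi_2}^2\le 1$, so $\sum_i W_{ii}(X_i^2 - \E X_i^2)$ is a sum of independent centered sub-exponential variables and Bernstein's inequality (e.g.\ \cite[Theorem~2.8.1]{vershynin-book}) gives exactly a tail of the desired shape, with $\|W\|_F^2$ and $\|W\|$ replaced by $\sum_i W_{ii}^2$ and $\max_i|W_{ii}|$.

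For the off-diagonal term the idea is to bound the moment generating function $\E\exp(\lambda\, X\transpose W' X)$ for $|\lambda|$ small and then apply Markov's inequality. Because $W'$ has zero diagonal, the decoupling inequality for quadratic chaos (de la Pe\~{n}a--Montgomery-Smith; see \cite[Theorem~6.1.1]{vershynin-book}), applied to the convex function $z\mapsto e^{\lambda z}$, yields $\E\exp(\lambda X\transpose W' X)\le\E\exp(4\lambda X\transpose W' X')$ with $X'$ an independent copy of $X$. Conditioning on $X'$, the quantity $X\transpose W' X' = \sum_i X_i (W'X')_i$ is a sum of independent centered sub-gaussians, so $\E_X\exp(4\lambda X\transpose W' X')\le\exp\!\big(c_1\lambda^2\|W'X'\|_2^2\big)$, using that sub-gaussian norms of independent sums add in $\ell^2$. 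To integrate over $X'$ we use the Gaussian identity $\exp(\mu\|v\|_2^2)=\E_g\exp(\sqrt{2\mu}\,\langle v,g\rangle)$ for $g\sim\sN(0,I_d)$: taking $\mu = c_1\lambda^2$, Fubini and a further conditioning on $g$ (again using $\ell^2$-additivity of sub-gaussian norms) give
\[
\E_{X'}\exp\!\big(c_1\lambda^2\|W'X'\|_2^2\big)
= \E_g\,\E_{X'}\exp\!\big(\sqrt{2c_1}\,\lambda\,\langle X',(W')\transpose g\rangle\big)
\le \E_g\exp\!\big(c_2\lambda^2\, g\transpose W'(W')\transpose g\big).
\]
This reduces the problem to a Gaussian quadratic form: if $\sigma_1(W')\ge\sigma_2(W')\ge\cdots$ are the singular values of $W'$, then $\E_g\exp(\theta\, g\transpose W'(W')\transpose g) = \prod_i(1-2\theta\sigma_i(W')^2)^{-1/2}$ for $\theta < 1/(2\|W'\|^2)$, and the elementary bound $(1-x)^{-1/2}\le e^x$ on $[0,1/2]$ shows this is at most $\exp(\theta\sum_i\sigma_i(W')^2)=\exp(\theta\|W'\|_F^2)$ as long as $\theta\|W'\|^2\le 1/4$. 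Tracking constants, $\E\exp(\lambda X\transpose W' X)\le\exp(c_3\lambda^2\|W'\|_F^2)$ for all $|\lambda|\le c_4/\|W'\|$.

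It remains to optimize. Markov gives $\Pr(X\transpose W' X > t)\le\exp(-\lambda t + c_3\lambda^2\|W'\|_F^2)$; choosing $\lambda = \min\{t/(2c_3\|W'\|_F^2),\, c_4/\|W'\|\}$ and distinguishing the two cases produces $\Pr(X\transpose W' X > t)\le\exp\!\big(-c\min\{t^2/\|W'\|_F^2,\, t/\|W'\|\}\big)$, and applying the same to $-W'$ handles the lower tail. Combining with the diagonal bound via a union bound, adjusting the constant to restore the prefactor $2$, and undoing the rescaling $X\mapsto KX$ gives the stated inequality with parameters $K^4\|W\|_F^2$ and $K^2\|W\|$. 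The main obstacle is the off-diagonal moment generating function bound — specifically, making the two-step sub-gaussian-to-Gaussian comparison clean (the nested conditioning on $X'$ and then on $g$, together with the $\ell^2$-additivity of sub-gaussian norms) while scrupulously respecting the constraint $|\lambda|\lesssim 1/\|W'\|$ needed for the Gaussian chaos moment generating function to be finite; it is precisely this finite radius of convergence that turns the two-regime optimization over $\lambda$ into the $\min$ of a sub-gaussian and a sub-exponential tail.
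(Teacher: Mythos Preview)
The paper does not prove this statement; it is quoted as a known result with citations to \cite[Theorem~6.2.1]{vershynin-book} and \cite[Example~2.12]{gabor-concentration}, and is used as a black box in the proof of \lemref{form-utils}. There is therefore no ``paper's own proof'' to compare against.

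That said, your argument is correct and is precisely the Rudelson--Vershynin proof that appears in the cited reference \cite[Section~6.2]{vershynin-book}: reduce to $K=1$, split into diagonal and off-diagonal parts, handle the diagonal with Bernstein for sub-exponentials, and for the off-diagonal part decouple, condition twice to pass through an auxiliary Gaussian, and compute the resulting Gaussian quadratic-form MGF explicitly in terms of singular values. The constraint $|\lambda|\lesssim 1/\|W'\|$ from the Gaussian MGF is exactly what produces the sub-exponential regime in the final $\min$. One minor point: your sentence ``adjusting the constant to restore the prefactor~$2$'' is doing real work, since the union bound over diagonal, off-diagonal, and two-sided tails gives a prefactor larger than $2$; but as the inequality is only claimed up to a universal constant $C$ in the exponent, absorbing the excess prefactor into $C$ (at the cost of shrinking $C$) is legitimate for $t$ bounded away from zero, and for small $t$ the bound is vacuous anyway since the right-hand side exceeds $1$.
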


A square matrix is called \emph{zero-diagonal} if all its diagonal
entries are zero.
The following moment inequalities will come in handy.

\begin{lem}\lemlabel{form-utils}
	Let $X = (X_1, \dots, X_d)$ be a random vector with i.i.d.\
	components where $\E\{X_1\} = 0$, $\E\{X_1^2\} = 1$, and $\|X_1\|_{\psi_2} \le
	K$. Let $W \in \R^{d \times d}$ be symmetric and zero-diagonal. Then,
	\begin{enumerate}[label=(\roman*)]
		\item \label{1-moment} $ \E\{X\transpose W X\} = 0 .$ 
		\item \label{2-moment} $\E\{(X\transpose W X)^2\} = 2 \|W\|_F^2 .$
		\item \label{k-moment} There exists $c_3>0$ such that for any integer $k$ we have 
		\[
		\E\{(X\transpose W X)^k\} \le c_3^k K^{2k} k! \|W\|_F^k .
		\]
		\item \label{exp-moment} There exist $c_1, c_2 > 0$ such that for any
		$t > 0$, if $c_1 K^2 t \|W\|_F\leq 1$, then
		\[
		\E\{e^{t X\transpose W X}\} \le 1 + c_2 K^4 t^2 \|W\|_F^2.
		\]
	\end{enumerate}
\end{lem}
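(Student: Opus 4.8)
The plan is to establish parts \ref{1-moment}, \ref{2-moment}, \ref{k-moment}, \ref{exp-moment} in that order, each building on the previous ones. For \ref{1-moment}, I would expand $X\transpose W X = \sum_{i \neq j} W_{ij} X_i X_j$ using that $W$ is zero-diagonal, and then take expectations term by term; since $i \neq j$ and the $X_i$ are independent with $\E\{X_i\} = 0$, each term $\E\{W_{ij} X_i X_j\} = W_{ij}\E\{X_i\}\E\{X_j\} = 0$. For \ref{2-moment}, I would expand the square as $\sum_{i \neq j}\sum_{k \neq \ell} W_{ij}W_{k\ell}\E\{X_i X_j X_k X_\ell\}$; using independence, zero mean, and $\E\{X_1^2\} = 1$, the only surviving terms are those where the multiset $\{i,j\}$ equals $\{k,\ell\}$ (as unordered pairs of distinct indices), i.e. $(k,\ell) \in \{(i,j),(j,i)\}$, each contributing $W_{ij}^2$. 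Summing over all ordered pairs $i \neq j$ gives $2\sum_{i<j}W_{ij}^2 \cdot 2 = 2\|W\|_F^2$ (using symmetry of $W$ and the zero diagonal so that $\|W\|_F^2 = \sum_{i \neq j} W_{ij}^2 = 2\sum_{i<j}W_{ij}^2$); I would double-check the bookkeeping here so the constant comes out to exactly $2$.

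For part \ref{k-moment}, the cleanest route is decoupling plus hypercontractivity: $X\transpose W X$ is a degree-$2$ polynomial chaos in sub-gaussian variables, and by the standard moment comparison for Gaussian/sub-gaussian chaos (as in \cite[Chapter~6]{vershynin-book}), its $L^k$ norm is bounded by $c K^2 k$ times its $L^2$ norm, i.e.
\[
\left(\E\{(X\transpose W X)^k\}\right)^{1/k} \le c_3 K^2 k \left(\E\{(X\transpose W X)^2\}\right)^{1/2} = c_3 K^2 k \sqrt{2}\,\|W\|_F ,
\]
where I have used part \ref{2-moment}. Raising to the $k$-th power and using $k^k \le e^k k!$ gives $\E\{(X\transpose W X)^k\} \le (c_3\sqrt{2}e)^k K^{2k} k! \|W\|_F^k$, which is the claimed bound after renaming the constant. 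Alternatively, one can derive this directly by integrating the Hanson-Wright tail bound of \thmref{hanson-wright} (with $\E X\transpose W X = 0$ by part \ref{1-moment}): $\E\{|X\transpose W X|^k\} = \int_0^\infty k t^{k-1}\Pr\{|X\transpose W X| > t\}\dif t$, split the integral at the point where the two terms in the $\min$ cross, and bound each piece by a Gamma integral; the sub-gaussian regime contributes $\Gamma(k/2+1) \lesssim k!$ times $(K^2\|W\|_F)^k$ and the sub-exponential regime contributes $\Gamma(k+1) = k!$ times $(K^2\|W\|)^k \le (K^2\|W\|_F)^k$, since $\|W\| \le \|W\|_F$. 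I would present whichever is shorter; the Hanson-Wright route is self-contained given what is already in the paper.

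For part \ref{exp-moment}, I would expand the exponential as a power series and apply part \ref{k-moment}:
\[
\E\{e^{t X\transpose W X}\} = \sum_{k=0}^\infty \frac{t^k}{k!}\E\{(X\transpose W X)^k\} \le 1 + 0 + \sum_{k=2}^\infty \frac{t^k}{k!} c_3^k K^{2k} k! \|W\|_F^k = 1 + \sum_{k=2}^\infty (c_3 K^2 t \|W\|_F)^k ,
\]
using $\E\{X\transpose W X\} = 0$ from part \ref{1-moment} to kill the $k=1$ term. If $c_3 K^2 t \|W\|_F \le 1/2$, the geometric tail sums to at most $\frac{(c_3 K^2 t\|W\|_F)^2}{1 - c_3 K^2 t\|W\|_F} \le 2(c_3 K^2 t\|W\|_F)^2$, giving the bound with $c_1 = 2c_3$ and $c_2 = 2c_3^2$. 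One subtlety: justifying the interchange of expectation and infinite sum (e.g. by monotone convergence on the even partial sums, or by noting absolute convergence from part \ref{k-moment} under the stated smallness condition). The main obstacle in the whole lemma is getting the factorial dependence in part \ref{k-moment} with the right constants — everything else is routine moment bookkeeping — so if I want to keep the proof elementary I would lean on the Hanson-Wright integration argument rather than invoking chaos hypercontractivity as a black box.
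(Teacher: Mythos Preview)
Your proposal is correct and matches the paper's proof essentially step for step: parts \ref{1-moment} and \ref{2-moment} by direct expansion, part \ref{k-moment} by integrating the Hanson-Wright tail bound (the paper uses exactly your ``alternative'' route, not the hypercontractivity one), and part \ref{exp-moment} by Taylor expansion, killing the $k=1$ term via \ref{1-moment}, and summing the geometric tail under $c_3 K^2 t\|W\|_F \le 1/2$. The only cosmetic difference is that the paper writes $\E\{|X\transpose W X|^k\} \le \int_0^\infty \Pr\{|X\transpose W X|^k \ge t\}\dif t$ directly rather than with the $kt^{k-1}$ density, but the resulting Gamma-function bounds are the same.
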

\begin{proof}
	Observation \ref{1-moment} follows simply by writing out the quadratic form,
	\begin{align*}
		\E X\transpose W X = \sum_{i , j} W_{ij} \E\{X_i X_j\} = \sum_{i = 1}^d W_{ii} \E\{X_i^2\} + \sum_{i \neq j} W_{ij} \E\{X_i\} \E\{X_j\} = 0 .
	\end{align*}
	
	To prove \ref{2-moment}, we expand the square, and notice that only
	the monomials of the form $\E \{X_i^4\}$ or $\E\{X_i^2 X_j^2\}$ are
	nonzero after taking expectations, so
	\begin{align*}
		\E\{(X\transpose W X)^2\} & = \E\left\{ \left( \sum_{i, j} W_{ij} X_i X_j \right)^2 \right\}  \\
		& = \sum_{i = 1}^d W_{ii}^2 \E \{X_i^4\}
		+ \sum_{i\neq j} \left(W_{ij}^2 + W_{ij} W_{ji} + W_{ii}W_{jj}\right) \E \{X_i^2 X_j^2\} \\
		&= 2 \sum_{i \neq j} W_{i j}^2 = 2 \|W\|_F^2.
	\end{align*}
	
	For \ref{k-moment}, we integrate
	\begin{align*}
		\E\{(X\transpose W X)^k\} &\le \int_0^\infty \Pr\{|X\transpose W X|^k \ge t\} \dif t \\
		&\le 2 \int_0^\infty e^{-C \frac{t^{1/k}}{K^2 \|W\|}} \dif t + 2\int_0^\infty e^{-C \frac{t^{2/k}}{K^4 \|W\|_F^2}} \dif t \tag{by \thmref{hanson-wright}} \\
		&= 2 \Gamma(k + 1) \left( \frac{K^2 \|W\|}{C} \right)^k + 2 \Gamma(k/2 + 1) \left( \frac{K^4 \|W\|_F^2}{C} \right)^{k/2}  \\
		&\le c_3^k K^{2k} k! \|W\|_F^k ,
	\end{align*}
	for some $c_3 > 0$. Here, $\Gamma(t)=\int_0^{\infty} x^{t-1}e^{-x} \dif x$ is the gamma function, which is increasing on $[1,\infty)$ and satisfies $\Gamma(t+1)=t\Gamma(t)$ for any real $t$, and $\Gamma(k+1)=k!$ for any positive integer $k$.

	To prove \ref{exp-moment}, we use the power series representation
	of the exponential, so
	\begin{align*}
		\E\{e^{t X\transpose W X}\} -1 & = \sum_{k = 1}^\infty \frac{\E\{(t X\transpose W X)^k\}}{k!} & \\
		& = \sum_{k = 2}^\infty \frac{\E\{(t X\transpose W X)^k\}}{k!} & \text{($\E \{t X\transpose W X\} =0$)}\\
		& \le  \sum_{k = 2}^\infty \frac{(c_3 K^2 t \|W\|_F)^k k!}{k!}& \text{(part \ref{k-moment})}\\
		& \le 2 c_3^2 K^4 t^2 \|W\|_F^2 & \text{(if $ c_3 K^2 t \|W\|_F \leq 1/2 $)},
	\end{align*}
completing the proof.
\end{proof}

For the rest of this section, let $X = (X_1, \dots, X_d)$ denote
a uniformly random vector in $\{-1, 1\}^d$.  All expectations will be
with respect to this random variable.

\begin{lem}\lemlabel{partition-function}
	There exist $c_1, c_2 > 0$ such that for any zero-diagonal symmetric
	$W \in \R^{d \times d}$ with $\|W\|_F \le c_1$,
	\[
	1 \le 2^{-d} Z(W) \le 1 + c_2 \|W\|_F^2 .
	\]
\end{lem}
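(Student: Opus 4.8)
The plan is to write $Z(W) = \sum_{x \in \{-1,1\}^d} e^{H_W(x)} = 2^d \, \E\{e^{X^{\mathsf{T}} W X}\}$, where $X$ is uniform on $\{-1,1\}^d$ as stipulated just before the lemma. The lower bound $2^{-d} Z(W) \ge 1$ is then immediate from Jensen's inequality applied to the convex function $e^{(\cdot)}$: $\E\{e^{X^{\mathsf{T}} W X}\} \ge e^{\E\{X^{\mathsf{T}} W X\}} = e^0 = 1$, using part \ref{1-moment} of \lemref{form-utils} (the coordinates $X_i$ are i.i.d., mean zero, with $\E\{X_1^2\}=1$, and $W$ is zero-diagonal symmetric). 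So the entire content is the upper bound.

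For the upper bound, I would apply part \ref{exp-moment} of \lemref{form-utils} directly. The coordinates of $X$ are bounded in $[-1,1]$, hence sub-gaussian; one computes (or cites the standard fact) that $\|X_1\|_{\psi_2} \le K$ for some universal constant $K$ — e.g. $K$ can be taken to be $1/\sqrt{\ln 2}$ since $\E\{e^{X_1^2/t^2}\} = e^{1/t^2} \le 2$ iff $t^2 \ge 1/\ln 2$. Then part \ref{exp-moment} gives: there exist $c_1', c_2' > 0$ such that if $c_1' K^2 t \|W\|_F \le 1$ then $\E\{e^{t X^{\mathsf{T}} W X}\} \le 1 + c_2' K^4 t^2 \|W\|_F^2$. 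Taking $t = 1$, the hypothesis becomes $\|W\|_F \le 1/(c_1' K^2) =: c_1$, and under this hypothesis $2^{-d} Z(W) = \E\{e^{X^{\mathsf{T}} W X\}} \le 1 + (c_2' K^4) \|W\|_F^2 =: 1 + c_2 \|W\|_F^2$. Relabeling the constants $c_1 = 1/(c_1' K^2)$ and $c_2 = c_2' K^4$ (both absolute, since $K$ is absolute) finishes the proof.

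There is essentially no obstacle here — the lemma is a one-line corollary of \lemref{form-utils}\ref{exp-moment} with $t=1$ plus Jensen for the lower bound. The only mild care needed is to verify that $X$ (uniform on the hypercube) satisfies the hypotheses of \lemref{form-utils}: the components are i.i.d. with $\E\{X_1\}=0$, $\E\{X_1^2\}=1$, and a uniform (hence universal) sub-gaussian norm bound. Since these are all trivial, I expect the proof to be three or four lines. If one wanted to avoid invoking \ref{exp-moment} as a black box, an equally short alternative is to expand $e^{X^{\mathsf{T}} W X} = \sum_{k\ge 0} (X^{\mathsf{T}} W X)^k/k!$, use \ref{1-moment} to kill the $k=1$ term and \ref{2-moment} for $k=2$, and bound the tail $k \ge 3$ using \ref{k-moment} by a geometric series in $c_3 K^2 \|W\|_F$ — this converges provided $\|W\|_F$ is below a suitable absolute constant, again yielding $1 + O(\|W\|_F^2)$.
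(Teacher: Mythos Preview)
Your proposal is correct and essentially identical to the paper's proof: both rewrite $2^{-d}Z(W)=\E\{e^{X\transpose W X}\}$, obtain the lower bound from $\E\{X\transpose W X\}=0$ (the paper uses $e^t\ge 1+t$ pointwise, you use Jensen, which is the same thing), and obtain the upper bound directly from \lemref{form-utils}\ref{exp-moment} with $t=1$. The only cosmetic difference is that you spell out the sub-gaussian norm of a Rademacher variable, which the paper leaves implicit.
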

\begin{proof}
	By the definition of $Z(W)$, 
	\begin{align*}
		2^{-d} Z(W) = \sum_{x \in \{-1, 1\}^d} 2^{-d} e^{x\transpose W x} = \E\{e^{X\transpose W X}\} .
	\end{align*}
	On the one hand, by \lemref{form-utils}~\ref{1-moment}, 
	\[
	\E\{e^{X\transpose W X }\} \ge \E\{1+X\transpose W X\} = 1 ,
	\]
	and on the other hand, by \lemref{form-utils}~\ref{exp-moment},
	\begin{align*}
		\E\{e^{X\transpose W X }\} \le 1 + c_2 \|W\|_F^2,
	\end{align*}
when $\|W\|_F \leq c_1$ for some sufficiently small positive constant
	$c_1$.
\end{proof}

\begin{lem}\lemlabel{ising-kl}
	There exist $c_1, c_2 > 0$ such that for any zero-diagonal symmetric matrices
	$W, \tilde{W} \in \R^{d \times d}$ satisfying
	$\max\{\|W\|_F, \|\tilde{W}\|_F\} \le c_1$,
	\[
	\infdiv{f_W}{f_{\tilde{W}}} \le c_2 (\|W\|_F^2 + \|\tilde{W}\|_F^2) .
	\]
\end{lem}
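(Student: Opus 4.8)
The plan is to start from the exponential-family form of these densities. Since $f_W(x)=e^{x\transpose W x}/Z(W)$, for every $x\in\{-1,1\}^d$ we have $\log\bigl(f_W(x)/f_{\tilde W}(x)\bigr)=x\transpose(W-\tilde W)x+\log Z(\tilde W)-\log Z(W)$, so taking expectation with respect to $f_W$ gives
\[
\infdiv{f_W}{f_{\tilde W}}=\E_{f_W}\{X\transpose(W-\tilde W)X\}+\log Z(\tilde W)-\log Z(W).
\]
The partition-function term is controlled immediately by \lemref{partition-function}: once $\|W\|_F,\|\tilde W\|_F$ are below the constant $c_1$ of that lemma, we have $Z(W)\ge 2^d$ and $Z(\tilde W)\le 2^d\bigl(1+c_2\|\tilde W\|_F^2\bigr)$, hence $\log Z(\tilde W)-\log Z(W)\le\log\bigl(1+c_2\|\tilde W\|_F^2\bigr)\le c_2\|\tilde W\|_F^2$. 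It remains to bound $\E_{f_W}\{X\transpose A X\}$, where $A=W-\tilde W$ is again symmetric and zero-diagonal and satisfies $\|A\|_F\le\|W\|_F+\|\tilde W\|_F$.

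Rewriting this expectation over the uniform distribution on $\{-1,1\}^d$, it equals $\E\{e^{X\transpose W X}X\transpose A X\}\big/\E\{e^{X\transpose W X}\}$, whose denominator is at least $1$ by \lemref{form-utils}~\ref{1-moment} and Jensen's inequality, so it suffices to bound the numerator. A single Cauchy-Schwarz bound on the numerator (using \lemref{form-utils}~\ref{2-moment} and~\ref{exp-moment}) yields only $O(\|A\|_F)$, which is linear rather than quadratic in the norms. To sharpen this I would expand $e^{X\transpose W X}=\sum_{k\ge 0}(X\transpose W X)^k/k!$ and integrate term by term: the $k=0$ term vanishes since $\E\{X\transpose A X\}=0$ by \lemref{form-utils}~\ref{1-moment}, so every surviving term carries a factor of $\|W\|_F$. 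Precisely, for $k\ge 1$, Cauchy-Schwarz with \lemref{form-utils}~\ref{2-moment} and~\ref{k-moment} gives
\[
\bigl|\E\{(X\transpose W X)^k\, X\transpose A X\}\bigr|\le\sqrt{\E\{(X\transpose W X)^{2k}\}}\cdot\sqrt{\E\{(X\transpose A X)^2\}}\le (c_3 K^2\|W\|_F)^k\sqrt{(2k)!}\cdot\sqrt{2}\,\|A\|_F .
\]
Since $\sqrt{(2k)!}/k!=\sqrt{\binom{2k}{k}}\le 2^k$, summing over $k\ge 1$ is a geometric series (convergent once $\|W\|_F$ is below a suitable constant) whose sum is $O(\|A\|_F\|W\|_F)$, and dividing by the denominator keeps $\E_{f_W}\{X\transpose A X\}\le O(\|A\|_F\|W\|_F)$. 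Finally $\|A\|_F\|W\|_F\le\|W\|_F^2+\|W\|_F\|\tilde W\|_F\le 2(\|W\|_F^2+\|\tilde W\|_F^2)$ (using $ab\le(a^2+b^2)/2$), so combining with the partition-function bound gives $\infdiv{f_W}{f_{\tilde W}}\le c_2(\|W\|_F^2+\|\tilde W\|_F^2)$ for a suitable new constant $c_2$, under the standing hypothesis $\max\{\|W\|_F,\|\tilde W\|_F\}\le c_1$.

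I expect the crux to be the control of $\E_{f_W}\{X\transpose A X\}$: since this is an expectation under the Ising measure rather than the uniform one, the moment identities of \lemref{form-utils} do not apply off the shelf, and any one-shot estimate of the numerator loses the quadratic dependence. The extra factor of $\|W\|_F$ must be extracted by noticing that the zeroth-order term of the exponential's power series cancels, and one then has to check that the infinite tail of higher-order terms is harmless. A cosmetically cleaner organization, paralleling \lemref{normal-kl}, is to bound $\infdiv{f_W}{f_{\tilde W}}$ by the symmetrized Jeffreys divergence $\sinfdiv{f_W}{f_{\tilde W}}=\infdiv{f_W}{f_{\tilde W}}+\infdiv{f_{\tilde W}}{f_W}=\E_{f_W}\{X\transpose(W-\tilde W)X\}-\E_{f_{\tilde W}}\{X\transpose(W-\tilde W)X\}$, in which the $\log Z$ terms cancel exactly; one then applies the same power-series estimate to each of the two Ising-measure expectations (with the roles of $W$ and $\tilde W$ swapped in the second) and uses $\infdiv{f_W}{f_{\tilde W}}\le\sinfdiv{f_W}{f_{\tilde W}}$.
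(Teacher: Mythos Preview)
Your argument is correct. The exponential-family identity for $\infdiv{f_W}{f_{\tilde W}}$, the partition-function bound via \lemref{partition-function}, and the power-series expansion of the numerator all go through as written; the key observation that the $k=0$ term $\E\{X\transpose A X\}$ vanishes is exactly what buys the extra factor of $\|W\|_F$ and hence the quadratic dependence. (One minor point worth making explicit: when you say ``it suffices to bound the numerator,'' this uses that if the numerator is negative then the quotient is negative and the desired upper bound is trivial, while if it is nonnegative then dividing by a denominator $\ge 1$ only helps.)

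The paper takes a different route. After isolating the same $\log Z$ contribution, it passes to the Jeffreys divergence so that the main term becomes $\E\bigl\{(e^{H_W(X)}-e^{H_{\tilde W}(X)})(H_W(X)-H_{\tilde W}(X))\bigr\}$, and then controls this via the two-sided Taylor estimate $1+t\le e^t\le 1+t+\tfrac{t^2}{2}\max\{e^t,1\}$, which splits the expectation into three pieces handled by Cauchy--Schwarz together with \lemref{form-utils}~\ref{2-moment},~\ref{k-moment},~\ref{exp-moment}. Your approach is more direct: you never need to symmetrize (though you note it as an alternative), you avoid the pointwise Taylor inequality entirely, and the cancellation of the zeroth-order term makes the source of the quadratic bound transparent. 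The paper's route, on the other hand, parallels the Gaussian case in \lemref{normal-kl} structurally and packages the tail of the exponential into a single $\max\{e^t,1\}$ factor rather than an explicit geometric sum. Both arguments rest on the same moment bounds from \lemref{form-utils} and yield the same conclusion with comparable effort.
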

\newcommand{\wtilde}{\widetilde{W}}
\begin{proof} 
We prove the inequality for
$\sinfdiv{f_W}{f_{\tilde{W}}} = \infdiv{f_W}{f_{\tilde{W}}} +
\infdiv{f_{\tilde{W}}}{f_W}$. By definition,
	\begin{align*}
	\infdiv{f_W}{f_{\tilde{W}}}
	& =
	2^d \E\left\{ f_W(X)  \log\left(\frac{f_W(X)}{f_{\tilde{W}}(X)}\right)\right\}
	\\& =
	2^d \E\left\{ \frac{e^{H_W(X)}}{Z(W)}  \log\left(
	\frac
	{e^{H_W(X)} Z(\wtilde)}
	{e^{H_{\wtilde}(X)} Z(W)}
	\right)\right\}.
	\end{align*}
	From \lemref{partition-function} we have $0<2^d/Z(W)\leq 1$, whence,
	\begin{equation}
		\infdiv{f_W}{f_{\tilde{W}}}
		\leq
		\E\left\{e^{H_W(X)} (H_W(X)-H_{\wtilde}(X))\right\}
		+
		\E\left\{e^{H_W(X)} 
		\log \left( \frac{Z(\wtilde)}{Z(W)} \right)
		\right\}.\label{kleq}
	\end{equation}
	We next bound the second term.
	By \lemref{form-utils}~\ref{exp-moment} and since $\|W\|_F\leq c_1$,
	$\E\left\{e^{H_W(X)}\right\} 
	\leq 1 + c_3 \|W\|_F^2 \le 1 + c_3 c_1^2$.
	From \lemref{partition-function} we have $\frac{Z(\wtilde)}{Z(W)}\leq 1 + c_4\|\wtilde\|_F^2$,
	so
	$\log\left(\frac{Z(\wtilde)}{Z(W)}\right)\leq c_4\|\wtilde\|_F^2$, thus
	\[
	\E\left\{e^{H_W(X)} 
	\log \left( \frac{Z(\wtilde)}{Z(W)} \right)
	\right\}
	\leq
	(1 + c_3 c_1^2) c_4\|\wtilde\|_F^2.
	\]
	It can be shown similarly that
	\[
	\infdiv{f_{\tilde{W}}}{f_W}
	\leq
	\E\left\{e^{H_{\tilde{W}}(X)} (H_{\wtilde}(X)-H_W(X))\right\}+c_5\|W\|_F^2,
	\] 
	hence,
	\begin{align*}
	\sinfdiv{f_W}{f_{\tilde{W}}}
	\leq 
	\E&\left\{ \left(e^{H_W(X)} - e^{H_{\tilde{W}}(X)} \right) (H_W(X) - H_{\tilde{W}}(X)) \right\} \\&+c_5 (\|W\|_F^2+\|\tilde{W}\|_F^2).
	\end{align*}
For bounding the first term, using the elementary inequality
\begin{align}
	1 + t  \le e^t \le 1 + t + \frac{t^2}{2} \max\{e^t, 1\}
	\qquad \textnormal{for any }t\in\R ,\label{exp-frieze}
\end{align}
we find that for all
$t, s \in \R$,
\[
(e^t - e^s) (t - s) \le (t - s)^2 + |t - s| \left(t^2 \max\{e^t, 1\} + s^2 \max\{e^s, 1\}\right)/2 .
\]
Using this, we get the following upper bound,
\begin{align}
	&\E\left\{ \left(e^{H_W(X)} - e^{H_{\tilde{W}}(X)} \right) (H_W(X) - H_{\tilde{W}}(X)) \right\} \notag \\
	&\qquad \le \hspace{1.1em} \E\{(H_W(X) - H_{\tilde{W}}(X))^2\} \eqlabel{ising-kl-term-1} \\
	&\qquad \quad +\, \E\left\{ |H_W(X) - H_{\tilde{W}}(X)| \, H_W(X)^2 \max\{e^{H_W(X)}, 1\}/2\right\} \eqlabel{ising-kl-term-2} \\
	&\qquad \quad +\, \E\left\{ |H_W(X) - H_{\tilde{W}}(X)| \, H_{\tilde{W}}(X)^2 \max\{e^{H_{\tilde{W}}(X)}, 1\} /2 \right\} . \eqlabel{ising-kl-term-3}
\end{align}
The term \eqref{ising-kl-term-1} is
$2 \|W - \tilde{W}\|_F^2 \le 2 (\|W\|_F + \|\tilde{W}\|_F)^2$ by
\lemref{form-utils}~\ref{2-moment} and the triangle inequality for the Frobenius norm. For bounding \eqref{ising-kl-term-2}, 
we first observe that, clearly,
$(\max\{e^{H_W(X)}, 1\}/2)^2 \leq 
e^{2 H_W(X)} + 1$.
Then, using the inequality 
\begin{equation}
	\E \{A B\} \leq \sqrt{\E \{A^2\} \E\{B^2\}},
	\eqlabel{cs}
\end{equation}
which is the Cauchy-Schwarz inequality for the inner product $\langle A, B\rangle = \E \{A B\}$,
\begin{align*}
	&\E\left\{ |H_W(X) - H_{\tilde{W}}(X)| H_W(X)^2 \max\{e^{H_W(X)}, 1\}\right\} \\
	&\qquad \le \sqrt{ \E\{ (H_W(X) - H_{\tilde{W}}(X))^2 H_W(X)^4\} (\E\{e^{2 H_W(X)}\} + 1) }.
\end{align*}
By \lemref{form-utils}~\ref{exp-moment} and since $\|W\|_F\leq c_1$ by assumption,
$\sqrt{\E\{e^{2 H_W(X)}\} + 1} \leq c_6$.
For the other factor, we apply~\eqref{cs} again:
\begin{align*}
\E\{ (H_W(X) - H_{\tilde{W}}(X))^2 H_W(X)^4\}
& \leq
\sqrt{\E\{ (H_W(X) - H_{\tilde{W}}(X))^4 \}
\E \{ H_W(X)^8 \} } \\ &
\leq c_7 \sqrt{ \| W- \wtilde\|_F^{4} \| W\|_F ^{8} }
\leq (2c_1)^2c_7 \|W\|_F^4,
\end{align*}
where the second inequality follows from
\lemref{form-utils}~\ref{k-moment}
applied to matrices $W-\wtilde$ and $W$, and the third one follows from the triangle inequality,
$\| W- \wtilde\|_F \leq \|W\|_F+\|\wtilde\|_F\leq 2c_1$.
Putting everything together, we find that
\[
\E\left\{ |H_W(X) - H_{\tilde{W}}(X)| H_W(X)^2 \max\{e^{H_W(X)}, 1\}\right\}
\leq c_6 \sqrt{(2c_1)^2c_7 } \|W\|_F^2,
\]
and a similar bound holds for \eqref{ising-kl-term-3}, after which the result follows.
\end{proof}

\begin{lem}\lemlabel{ising-l1}
	There exist $c_1, c_2, c_3 > 0$ such that for any zero-diagonal
	symmetric matrices $W, \tilde{W} \in \R^{d \times d}$ with
	$\max\{\|W\|_F, \|\tilde{W}\|_F\} \leq c_1$,
	\[
	\norm{f_W - f_{\tilde{W}}}_1 \ge c_2 \|W - \tilde{W}\|_F - c_3(\|W\|_F^2 + \|\tilde{W}\|_F^2).
	\]
\end{lem}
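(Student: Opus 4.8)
The strategy is to Taylor-expand both Gibbs densities to first order around the uniform distribution on $\{-1,1\}^d$, observe that the linear term alone already carries $L^1$-mass of order $\|W-\tilde{W}\|_F$, and then show that every higher-order correction contributes only $O(\|W\|_F^2 + \|\tilde{W}\|_F^2)$. Throughout I write $D = W - \tilde{W}$ and let $X$ be uniform on $\{-1,1\}^d$; I assume $\max\{\|W\|_F, \|\tilde{W}\|_F\} \le c_1$ with $c_1$ small enough to apply \lemref{partition-function} and \lemref{form-utils}~\ref{exp-moment} (the latter even at $t = 2$).

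First I would record a pointwise decomposition. By the elementary inequality~(\ref{exp-frieze}), write $e^{H_W(x)} = 1 + H_W(x) + R_W(x)$ with $|R_W(x)| \le H_W(x)^2 \max\{e^{H_W(x)},1\}/2$, and similarly for $\tilde{W}$. By \lemref{partition-function}, write $2^{-d}Z(W) = 1 + \eta_W$ with $0 \le \eta_W \le c_2\|W\|_F^2$, and similarly $\eta_{\tilde{W}}$. Since $2^d f_W(x) = e^{H_W(x)}/(1+\eta_W) = e^{H_W(x)} - \eta_W\, 2^d f_W(x)$, subtracting the analogous identity for $\tilde{W}$ gives
\begin{align*}
	2^d\big(f_W(x) - f_{\tilde{W}}(x)\big) &= \big(H_W(x) - H_{\tilde{W}}(x)\big) + \big(R_W(x) - R_{\tilde{W}}(x)\big) \\
	&\quad - \eta_W\, 2^d f_W(x) + \eta_{\tilde{W}}\, 2^d f_{\tilde{W}}(x).
\end{align*}
Multiplying by $2^{-d}$, summing absolute values over $x \in \{-1,1\}^d$, and using the reverse triangle inequality together with $\sum_x \eta_W f_W(x) = \eta_W$ and $H_W(X) - H_{\tilde{W}}(X) = X\transpose D X$, I get
\[
\norm{f_W - f_{\tilde{W}}}_1 \ge \E\{|X\transpose D X|\} - \E\{|R_W(X)|\} - \E\{|R_{\tilde{W}}(X)|\} - \eta_W - \eta_{\tilde{W}}.
\]

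Next I would estimate the three contributions. For the main term I would prove the anti-concentration bound $\E\{|X\transpose D X|\} \ge c_3 \|D\|_F$: by Hölder's inequality with exponents $3/2$ and $3$, $\E\{(X\transpose D X)^2\} \le \E\{|X\transpose D X|\}^{2/3}\,\E\{(X\transpose D X)^4\}^{1/3}$, and feeding in $\E\{(X\transpose D X)^2\} = 2\|D\|_F^2$ from \lemref{form-utils}~\ref{2-moment} and $\E\{(X\transpose D X)^4\} \le c\|D\|_F^4$ from \lemref{form-utils}~\ref{k-moment} at $k = 4$ yields $\E\{|X\transpose D X|\} \ge c_3\|D\|_F$. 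For the remainders, Cauchy--Schwarz and $\max\{a,1\} \le a+1$ give $\E\{|R_W(X)|\} \le \tfrac12\sqrt{\E\{H_W(X)^4\}\,(\E\{e^{2H_W(X)}\}+1)}$, where $\E\{H_W(X)^4\} \le c\|W\|_F^4$ by \lemref{form-utils}~\ref{k-moment} and $\E\{e^{2H_W(X)}\} \le 1 + c\|W\|_F^2$ by \lemref{form-utils}~\ref{exp-moment}, so $\E\{|R_W(X)|\} \le c\|W\|_F^2$, and likewise for $\tilde{W}$; finally $\eta_W, \eta_{\tilde{W}}$ are already $O(\|W\|_F^2), O(\|\tilde{W}\|_F^2)$. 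Substituting these bounds and using the triangle inequality $\|D\|_F = \|W - \tilde{W}\|_F$ gives the claimed inequality.

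The crux is the anti-concentration step $\E\{|X\transpose D X|\} \ge c_3\|D\|_F$: unlike every other estimate in this section (and in \lemref{ising-kl}), it is a \emph{lower} bound on a first absolute moment, obtained by squeezing the second moment between a first moment and a fourth moment; one must also check the constant is universal, which it is because the coordinates of $X$ are $\pm 1$ and hence have absolutely bounded sub-gaussian norm, so the constants coming from \lemref{form-utils}~\ref{2-moment} and~\ref{k-moment} are absolute. The remaining effort is purely organizational: carrying out the algebraic decomposition cleanly and checking that $c_1$ is small enough for every invocation of \lemref{partition-function} and \lemref{form-utils}~\ref{exp-moment}, including the one at $t = 2$.
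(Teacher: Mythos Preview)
Your proposal is correct and follows essentially the same approach as the paper: both arguments strip the partition-function correction via \lemref{partition-function}, peel off the quadratic Taylor remainder using inequality~(\ref{exp-frieze}) and bound it by Cauchy--Schwarz together with \lemref{form-utils}~\ref{k-moment},~\ref{exp-moment}, and then obtain the anti-concentration lower bound $\E\{|X\transpose D X|\} \ge c\|D\|_F$ by the same H\"older trick squeezing the second moment between the first and the fourth. The only difference is organizational---you write out the decomposition in one shot with explicit $R_W$ and $\eta_W$, whereas the paper first reduces to $\E\{|e^{H_W(X)}-e^{H_{\tilde{W}}(X)}|\}$ and then Taylor-expands---but the content is identical.
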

\begin{proof}
	By \lemref{partition-function} we have
	\[
	\left| \frac{2^d}{Z(W)}-1\right| \leq \frac{c_0 \|W\|_F^2}{1+c_0\|W\|_F^2}
	\leq c_0 \|W\|_F^2,
	\]
	so by the triangle inequality and \lemref{form-utils}~\ref{exp-moment}, there is $c_4 > 0$
	for which
	\begin{align}
		\norm{f_W - f_{\tilde{W}}}_1 &= 2^d \E\left\{ \left| \frac{e^{H_W(X)}}{Z(W)} - \frac{e^{H_{\tilde{W}}(X)}}{Z(\tilde{W})} \right| \right\} \notag \\
		&\ge \E \left\{ \left| e^{H_W(X)} - e^{H_{\tilde{W}}(X)} \right| \right\} -  c_4 (\|W\|_F^2 + \|\tilde{W}\|_F^2) . 
	\end{align}
	By~(\ref{exp-frieze})
	and the triangle inequality again,
	\begin{align*}
		\E\left\{\left|e^{H_W(X)} - e^{H_{\tilde{W}}(X)}\right|\right\} &\ge \E\{|H_W(X) - H_{\tilde{W}}(X)|\} \\
		&\hphantom{\ge} -\, (1/2) \E\left\{H_W(X)^2 \max\{e^{H_W(X)}, 1\}\right\} \\
		&\hphantom{\ge} -\, (1/2) \E\left\{H_{\tilde{W}}(X)^2 \max\{e^{H_{\tilde{W}}(X)}, 1\}\right\} .
	\end{align*}
	We first bound the second term. By~\eqref{cs} and
	\lemref{form-utils}~\ref{k-moment}, \ref{exp-moment},
	\begin{align*}
		\E\left\{H_W(X)^2 \max\{e^{H_W(X)}, 1\}\right\} \le \sqrt{\E\{H_W(X)^4\} (\E\{e^{2H_W(X)}\} + 1)} \le c_5 \|W\|_F^2,
	\end{align*}
	and a similar analysis works for the third term. For the first term,
	by H\"{o}lder's inequality and \lemref{form-utils}~\ref{2-moment},
	\ref{k-moment} (applied to the symmetric zero-diagonal matrix $W-\wtilde$), there is a $c_6 > 0$ for which
	\begin{align*}
		\E\left\{|H_W(X) - H_{\tilde{W}}(X)| \right\} &\ge \frac{\E\left\{(H_W(X) - H_{\tilde{W}}(X))^2\right\}^{3/2}}{\E\left\{(H_W(X) - H_{\tilde{W}}(X))^4\right\}^{1/2}} \ge c_6 \|W - \tilde{W}\|_F . \qedhere
	\end{align*}
\end{proof}

The proof of \propref{ising-lower-bound} is now identical to that of
\propref{normal-lower-bound}.

\subsection{Proof of the lower bound in \thmref{ising-main-bound}~\ref{ising-1}}\label{product}
Let $\overline{\mathcal I_d}$ be the class of $d$-dimensional Ising models with no
interactions. 
Note that in this case the problem is density estimation for a product distribution.

The lower bound in
\thmref{ising-main-bound}~\ref{ising-1} will follow from the next
proposition along with \thmref{ising-main-bound}~\ref{ising-2} and
subadditivity of the square root, just as in \secref{normal-lower}.
\begin{prop}\proplabel{no-external}
	There exist $c_1, c_2 > 0$ such that if $n \ge c_1 d$, 
	\[
	\sR_n(\overline{\mathcal I_d}) \ge c_2 \sqrt{d/n} .
	\]
\end{prop}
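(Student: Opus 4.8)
\emph{Proof strategy.}
The plan is to apply \lemref{fano-lower} to a finite packing of $\overline{\mathcal I_d}$, using that (as noted above) $\overline{\mathcal I_d}$ is precisely the class of products of $d$ independent $\{-1,1\}$-valued coins. First, I would dispose of $d$ below some absolute constant $d_0$: the subfamily of $\overline{\mathcal I_d}$ in which coordinates $2,\dots,d$ are uniform is an isometric copy of the class of single $\{-1,1\}$-coins, whose minimax rate is $\Omega(1/\sqrt n)$ by Le Cam's two-point method~\cite{lecam-1} applied to the two coins $\mathrm{Bern}(1/2\pm c/\sqrt n)$ (bounding the $n$-fold total variation distance by $\sqrt{n\,\mathrm{KL}/2}$); since $1/\sqrt n\ge\sqrt{d/n}/\sqrt{d_0}$ for $d\le d_0$, this settles those $d$, and I henceforth assume $d\ge d_0$.

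Fix $\sS\subseteq\{-1,1\}^d$ with $|\sS|\ge 2^{d/5}$ and $\norm{s-\tilde s}_1\ge d/3$ for distinct $s,\tilde s$ (\thmref{gilbert-varshamov} with its parameter set to $d$), and put $\delta=c_2/\sqrt n$ for a small absolute constant $c_2$. For $s\in\sS$, let $f_s\in\overline{\mathcal I_d}$ be the product law with $\Pr\{X_i=1\}=1/2+\delta s_i$ under $f_s$ (a no-interaction Ising model with $h_i=\tfrac12\log\frac{1/2+\delta s_i}{1/2-\delta s_i}$), and let $\sF=\{f_s:s\in\sS\}\subseteq\overline{\mathcal I_d}$. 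If $s\neq\tilde s$ and $D=\{i:s_i\neq\tilde s_i\}$, then $k:=|D|\ge d/6$ and $f_s,f_{\tilde s}$ agree off $D$, so
\[
\infdiv{f_s}{f_{\tilde s}}=k\,\infdiv{\mathrm{Bern}(1/2+\delta)}{\mathrm{Bern}(1/2-\delta)}=2k\delta\log\tfrac{1+2\delta}{1-2\delta}\le c_3\,d\delta^2=:\beta
\]
for $\delta\le 1/4$. With $c_2$ small enough, $n\beta+\log 2=c_2^2c_3\,d+\log 2\le\tfrac12\log|\sF|$ once $d\ge d_0$ (using $\log|\sF|\ge(d/5)\log 2$), so the Fano factor $1-(n\beta+\log 2)/\log|\sF|$ is at least $1/2$.

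The main step is the matching lower bound on $\norm{f_s-f_{\tilde s}}_1=2\TV(f_s,f_{\tilde s})$. Restricted to the coordinates of $D$, the likelihood ratio $f_s/f_{\tilde s}$ is the increasing function $\bigl(\tfrac{1/2+\delta}{1/2-\delta}\bigr)^{T}$ of $T:=\sum_{i\in D}s_ix_i$, so the total variation distance equals that of the law of $T$; since under $f_s$ (resp.\ $f_{\tilde s}$) the signs $s_ix_i$, $i\in D$, are i.i.d.\ equal to $1$ with probability $1/2+\delta$ (resp.\ $1/2-\delta$), this reduces matters to showing
\[
\TV\bigl(\mathrm{Bin}(k,1/2+\delta),\mathrm{Bin}(k,1/2-\delta)\bigr)\ge c_4\,\delta\sqrt k\qquad\text{whenever }\delta\le1/4,\ \delta^2k\le1,
\]
and $\delta^2k\le\delta^2d=c_2^2\,d/n\le c_2^2\le1$ holds automatically when $n\ge d$. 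For this I would argue directly: comparing the two pmf's term by term shows the positive part of their difference sits on $\{j<k/2\}$, so the distance equals $h(1/2+\delta)$, where $h(p):=\Pr\{\mathrm{Bin}(k,p)\ge\lfloor k/2\rfloor+1\}-\Pr\{\mathrm{Bin}(k,p)\le\lceil k/2\rceil-1\}$ vanishes at $p=1/2$ by symmetry; the identity $\frac{d}{dp}\Pr\{\mathrm{Bin}(k,p)\ge m\}=m\binom km p^{m-1}(1-p)^{k-m}$ writes $h'$ as a sum of two nonnegative terms of this form, whence $h'(1/2)\ge k\binom{k}{\lfloor k/2\rfloor}2^{-k}\ge\sqrt k/2$ (using $\binom{k}{\lfloor k/2\rfloor}\ge 2^k/(2\sqrt k)$), while on $[1/2,1/2+\delta]$ each such term stays within a constant factor of its value at $1/2$ since $\bigl(4p(1-p)\bigr)^{k/2}\ge(1-4\delta^2)^{k/2}\ge e^{-4}$ there; integrating gives $h(1/2+\delta)\ge c_4\,\delta\sqrt k$. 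Hence $\alpha:=\inf_{f\neq g\in\sF}\norm{f-g}_1\ge 2c_4\,\delta\sqrt{d/6}$.

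Plugging $|\sF|\ge2^{d/5}$, $\alpha$, and $\beta$ into \lemref{fano-lower} and using the Fano-factor bound yields $\sR_n(\overline{\mathcal I_d})\ge\alpha/8\ge c_5\,\delta\sqrt d=c_5c_2\sqrt{d/n}$, which together with the small-$d$ case establishes the proposition (the constraint $n\ge c_1d$ is used only through $\delta^2k\le1$, so $c_1=1$ suffices). The one delicate ingredient is the binomial total variation lower bound: it must hold for every $k$, and in particular when $\delta\sqrt k$ is tiny — a regime in which $\TV$ itself is tiny and Hoeffding/Chebyshev-type separations give nothing — so the argument has to track the precise shape of the binomial near its mode, which is exactly why it goes through the exact derivative identity for the binomial CDF and the central binomial coefficient bound rather than through a normal approximation.
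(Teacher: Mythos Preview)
Your proof is correct and uses the same Fano/Gilbert--Varshamov scaffold as the paper's sketch: the same $\pm\delta$ external-field packing, the same $O(d\delta^2)$ KL bound, and \lemref{fano-lower} to conclude. The difference is in the $L^1$ lower bound. The paper's sketch points toward adapting the moment method of \lemref{ising-l1} to the linear Hamiltonian $h\transpose x$---the partition functions $(2\cosh\delta)^d$ coincide across the packing, and $\E|(h-\tilde h)\transpose X|\ge c\|h-\tilde h\|_2$ via the second-to-fourth-moment (Khintchine) inequality yields $\Omega(\delta\sqrt d)$ directly, with residual terms of order $d\delta^2$. You instead exploit the product structure: sufficiency of $T=\sum_{i\in D}s_iX_i$ collapses the problem to $\TV(\mathrm{Bin}(k,1/2+\delta),\mathrm{Bin}(k,1/2-\delta))$, which you handle via the beta-integral identity for the binomial CDF together with the central-binomial estimate. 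Both reach $\Omega(\delta\sqrt d)$; your route is self-contained for product measures and avoids the moment apparatus, while the paper's keeps the argument parallel to the interactions case. One minor wrinkle: the claim that each summand of $h'(p)$ stays within a constant of its value at $1/2$ needs a hair more than the bound on $(4p(1-p))^{k/2}$, since the actual exponents are $m-1$ and $k-m$ rather than $k/2$ each; writing $p^{m-1}(1-p)^{k-m}=[p(1-p)]^{(k-1)/2}\,[p/(1-p)]^{(2m-k-1)/2}$ and noting $|2m-k-1|\le1$ closes this immediately. Your separate Le Cam treatment of small $d$ is a detail the sketch omits but that is indeed needed for the Fano factor to be bounded away from zero.
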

\begin{proof}[Proof sketch]
	As in the above arguments, we pick a subclass of $2^{d/5}$ densities
	of $\overline{\mathcal I_d}$ and apply \lemref{fano-lower}.  The corresponding
	magnetic fields will have entries $\pm \delta$, with the signs
	specified by \thmref{gilbert-varshamov}, so that any two of them
	differ in at least $d/6$ components.  One can then show that the
	KL-divergence between any two of these densities is at most a
	constant factor of $\delta^2 d$, while the $L^1$-distances are at
	least some constant factor of $\delta \sqrt{d}$.  The proofs are
	simpler than those in the previous section; for example, in this
	case, the partition functions can be computed exactly, and are equal
	for every density in the subclass. We omit the details.
\end{proof}

\section{Proof of the upper bound in \thmref{unknowngraph}}
\label{sec:unknowngraph}
We give the proof for $\sF_{d,m}$, and the proof for $\sI_{d,m}$ is
identical.  Let $\sG_{d, m}$ denote the set of all labeled graphs with
vertex set $[d]$ and $m$ edges.  Now, $\sF_{d,m}$ has Yatracos class
\[
\sA = \bigcup_{(G, H) \in \sG_{d, m}^2} \sA_{G, H} ,
\]
where
\[
\sA_{G, H} = \Big\{ \{ x \in \R^d \colon g(x) > h(x)\} \colon g \in \sF_G, \, h \in \sF_H \Big\} .
\]	
Note that $|\sG_{d, m}| \le \binom{\binom{d}{2}}{m} \leq d^{2m}$, and
$\VC(\sA_{G, H}) \le 2m + 2d+1$ for any $G, H \in \sG_{d, m}$, as in
the proof of the upper bound in \thmref{normal-main-bound}. By
properties of the VC-dimension of unions (see, e.g.,
\cite[Exercise~6.11]{understanding}),
\begin{align*}
	\VC(\sA) &= \VC\left( \bigcup_{(G, H) \in \sG_{d, m}^2} \sA_{G, H} \right) \\
	&\le c_1 (m+d) \log (m+d) + c_2 \log d^{4m} \\
	&\le c_3 (m + d) \log d ,
\end{align*}
so by \thmref{risk-vc},
\[ 
\sR_n(\sF_{d, m}) \le c_4 \sqrt{\frac{(m + d) \log d}{n}}.
\]

\section{Discussion}\seclabel{conclusion}
Our work raises several open problems.
\begin{enumerate}[label=\textit{\arabic*.}]
	\item \textit{Higher order forms.} We have studied estimating
	densities that are proportional to the exponential of some quadratic
	form. One can ask for the minimax risk of the class of densities in
	which this form has a higher order. Namely, let $k, d \ge 1$ be
	given integers, and suppose that $\sF$ is a class of densities
	supported on $\{-1, 1\}^d$, where each density $f \in \sF$ is
	parameterized by weights $w_{i_1, \dots, i_k} \in \R$ for each
	$1 \le i_1 < i_2 < \dots < i_k \le d$, and when $x \in \{-1, 1\}^d$,
	\[
	f(x) \propto \exp\left\{\sum_{1 \le i_1 < \dots < i_k \le d} w_{i_1, i_2, \dots,
		i_k} x_{i_1} x_{i_2} \cdots x_{i_k} \right\} .
	\]
	Then, just as in the proof of the upper bound of
	\thmref{ising-main-bound}~\ref{ising-2}, we have that there is a
	universal constant $c_1 > 0$ for which
	\[
	\sR_n(\sF) \le c_1 \min\left\{1, \sqrt{ \frac{\binom{d}{k}}{n} } \right\}.
	\]
	Can this be shown to be tight to within a constant factor? It is
	straightforward to see that the answer is \emph{yes} for $k = 1$,
	and the results of this paper show that the answer is \emph{yes} for
	$k = 2$. However, for $k \ge 3$, our techniques seem to
	fail. Auffinger and Ben Arous~\cite{auffinger} noted that when the
	weights are $w_{i_1, \dots, i_k} \iid \sN(0, 1)$, the random $k$-th
	order form $g \colon \bS^{d - 1} \to \R$ defined by
	\[
	g(x) = \sum_{i_1,  \dots, i_k = 1}^d w_{i_1, i_2, \dots, i_k} x_{i_1} x_{i_2} \dots x_{i_k} 
	\]
	blows up in complexity once $k \ge 3$. For example, they show that
	there is a $c_3 > 0$ for which $g$ has at least $e^{c_3 d}$ local
	minima on $\bS^{d - 1}$ in expectation, as long as $k \ge 3$. On the
	other hand, when $k \le 2$, deterministically $g$ has only a
	constant number of local minima on $\bS^{d - 1}$. This gap in
	complexity may indicate that analyzing the case $k \ge 3$ for our
	purposes will require  more sophisticated techniques.
	
	\item \textit{Tightness of the VC-dimension bound.}  We proved that
	$\sR_n(\sF)$ is bounded from above and below by constant factors of
	$\sqrt{\VC(\sA)/n}$, where $\sA$ is the Yatracos class of $\sF$, for
	$\sF \in \{ \sF_G, \sI_G, \sI'_G\}$. The upper bound here holds for
	any class $\sF$ by \thmref{risk-vc}, and it can be easily seen that
	there are classes of densities for which this is not tight.  Can we
	characterize the classes of densities $\sF$ for which $\sR_n(\sF)$
	is in fact on the order of $\sqrt{\VC(\sA)/n}$?
	
	\item \textit{The minimax risk of unlabeled graphical models.}  In our
	setting, the given graph $G$ is labeled, so we are given the
	specific pairs of coordinates which interact. What if only the
	structure of the graph $G$ is known, but its labeling is not? What
	if we know that $G$ is a tree?  If only the number of edges of $G$
	is known, \thmref{unknowngraph} provides some bound that is tight up
	to a factor of $\sqrt{\log d}$. Can this gap be closed?
	
	\item \textit{The minimax risk of Ising blockmodels.} For a given
	$S\subseteq [d]$ with $|S|=d/2$ and parameters
	$\alpha,\beta \in \R$, a \emph{bipartite mean-field model}~\cite{bipartitemeanfield}, also called an \emph{Ising blockmodel}~\cite{berthet}, has density
	\[
	f_{S,\alpha,\beta}(x) = \exp\left\{ \frac{\beta}{2d} \sum_{i \sim j} x_i x_j + \frac{\alpha}{2d} \sum_{i \not\sim j} x_i x_j \right\} \bigg / Z(\alpha,\beta) 
	\]
	for $x \in \{-1, 1\}^d$, where $i \sim j$ means that either
	$i, j \in S$ or $i, j \not\in S$, and $i \not\sim j$ means that one
	of $i, j$ is in $S$ and one is not, and $Z(\alpha,\beta)$ is the
	normalizing factor. Motivated by social network
	analysis and the notion of communities in such networks, Berthet, Rigollet,
	and Srivastava~\cite{berthet} studied this model in a learning context. Their work is mainly concerned with the
	estimation or recovery of $S$ from $n$ independent samples of
	$f_{S, \alpha, \beta}$, but one can also ask for the minimax
	learning rate for this class of densities, if some or all of
	$\alpha, \beta$ and $S$ are unknown.
	
	\item \textit{Efficient algorithms for density estimation.}
	Here, we focused on statistical efficiency rather than computational efficiency. 
	As the VC-based density estimate in \thmref{risk-vc} cannot be efficiently computed, a natural open problem is to design efficient density estimators achieving the minimax risks for normal and Ising distributions.
	As far as we know, only for the class of multivariate normal distributions, which correspond to normal graphical models with respect to the complete graph,
	such an efficient algorithm exists. It is based on careful mean and covariance estimations (see~\cite[Appendix~B]{2017-abbas}).
\end{enumerate}

\section*{Acknowledgements}
We thank the anonymous referee of the Electronic Journal of Statistics for their comments on improving the presentation.


\begin{thebibliography}{45}
	
	\bibitem{acharya-lower-bound}
	\begin{bincollection}[author]
		\bauthor{\bsnm{Acharya},~\bfnm{Jayadev}\binits{J.}},
		\bauthor{\bsnm{Jafarpour},~\bfnm{Ashkan}\binits{A.}},
		\bauthor{\bsnm{Orlitsky},~\bfnm{Alon}\binits{A.}} \AND
		\bauthor{\bsnm{Suresh},~\bfnm{Ananda~Theertha}\binits{A.~T.}}
		(\byear{2014}).
		\btitle{Near-optimal-sample estimators for spherical {G}aussian mixtures}.
		In \bbooktitle{Advances in Neural Information Processing Systems 27}
		(\beditor{\bfnm{Z.}\binits{Z.}~\bsnm{Ghahramani}},
		\beditor{\bfnm{M.}\binits{M.}~\bsnm{Welling}},
		\beditor{\bfnm{C.}\binits{C.}~\bsnm{Cortes}},
		\beditor{\bfnm{N.~D.}\binits{N.~D.}~\bsnm{Lawrence}} \AND
		\beditor{\bfnm{K.~Q.}\binits{K.~Q.}~\bsnm{Weinberger}}, eds.)
		\bpages{1395--1403}.
		\bpublisher{Curran Associates, Inc.}
	\end{bincollection}
	\endbibitem
	
	\bibitem{2017-abbas}
	\begin{barticle}[author]
		\bauthor{\bsnm{Ashtiani},~\bfnm{H.}\binits{H.}},
		\bauthor{\bsnm{Ben-David},~\bfnm{S.}\binits{S.}},
		\bauthor{\bsnm{Harvey},~\bfnm{N.}\binits{N.}},
		\bauthor{\bsnm{Liaw},~\bfnm{C.}\binits{C.}},
		\bauthor{\bsnm{Mehrabian},~\bfnm{A.}\binits{A.}} \AND
		\bauthor{\bsnm{Plan},~\bfnm{Y.}\binits{Y.}}
		(\byear{2018}).
		\btitle{Near-optimal sample complexity bounds for robust learning of
			{G}aussians mixtures via compression schemes}.
		\bjournal{ArXiv e-prints}.
		\bnote{Appeared in the proceedings of Neural Information Processing Systems
			2018, available at \url{https://arxiv.org/abs/1710.05209v3}}.
	\end{barticle}
	\endbibitem
	
	\bibitem{2020-abbas}
	\begin{barticle}[author]
		\bauthor{\bsnm{Ashtiani},~\bfnm{H.}\binits{H.}},
		\bauthor{\bsnm{Ben-David},~\bfnm{S.}\binits{S.}},
		\bauthor{\bsnm{Harvey},~\bfnm{N.}\binits{N.}},
		\bauthor{\bsnm{Liaw},~\bfnm{C.}\binits{C.}},
		\bauthor{\bsnm{Mehrabian},~\bfnm{A.}\binits{A.}} \AND
		\bauthor{\bsnm{Plan},~\bfnm{Y.}\binits{Y.}}
		(\byear{2019}).
		\btitle{Near-optimal sample complexity bounds for robust learning of
			{G}aussians mixtures via compression schemes}.
		\bjournal{ArXiv e-prints}.
		\bnote{To appear in the Journal of the ACM, available at
			\url{https://arxiv.org/abs/1710.05209v4}}.
	\end{barticle}
	\endbibitem
	
	\bibitem{abbas-mixtures}
	\begin{binproceedings}[author]
		\bauthor{\bsnm{Ashtiani},~\bfnm{Hassan}\binits{H.}},
		\bauthor{\bsnm{Ben-David},~\bfnm{Shai}\binits{S.}} \AND
		\bauthor{\bsnm{Mehrabian},~\bfnm{Abbas}\binits{A.}}
		(\byear{2018}).
		\btitle{Sample-efficient learning of mixtures}.
		In \bbooktitle{Proceedings of the Thirty-Second AAAI Conference on Artificial
			Intelligence}.
		\bseries{AAAI '18}
		\bpages{2679--2686}.
		\bpublisher{AAAI Publications.}
		\bnote{Available at \url{https://arxiv.org/abs/1706.01596}}.
	\end{binproceedings}
	\endbibitem
	
	\bibitem{assouad}
	\begin{barticle}[author]
		\bauthor{\bsnm{Assouad},~\bfnm{Patrice}\binits{P.}}
		(\byear{1983}).
		\btitle{Deux remarques sur l'estimation}.
		\bjournal{C. R. Acad. Sci. Paris S\'er. I Math.}
		\bvolume{296}
		\bpages{1021--1024}.
	\end{barticle}
	\endbibitem
	
	\bibitem{auffinger}
	\begin{barticle}[author]
		\bauthor{\bsnm{Auffinger},~\bfnm{Antonio}\binits{A.}} \AND
		\bauthor{\bsnm{Ben~Arous},~\bfnm{Gerard}\binits{G.}}
		(\byear{2013}).
		\btitle{Complexity of random smooth functions on the high-dimensional sphere}.
		\bjournal{Ann. Probab.}
		\bvolume{41}
		\bpages{4214--4247}.
		\bdoi{10.1214/13-AOP862}
	\end{barticle}
	\endbibitem
	
	\bibitem{ulyanov}
	\begin{barticle}[author]
		\bauthor{\bsnm{{Barsov}},~\bfnm{S.~S.}\binits{S.~S.}} \AND
		\bauthor{\bsnm{{Ul'yanov}},~\bfnm{V.~V.}\binits{V.~V.}}
		(\byear{1987}).
		\btitle{{Estimates of the proximity of Gaussian measures.}}
		\bjournal{{Sov. Math., Dokl.}}
		\bvolume{34}
		\bpages{462--466}.
	\end{barticle}
	\endbibitem
	
	\bibitem{berthet}
	\begin{barticle}[author]
		\bauthor{\bsnm{Berthet},~\bfnm{Quentin}\binits{Q.}},
		\bauthor{\bsnm{Rigollet},~\bfnm{Philippe}\binits{P.}} \AND
		\bauthor{\bsnm{Srivastava},~\bfnm{Piyush}\binits{P.}}
		(\byear{2019}).
		\btitle{Exact recovery in the {I}sing blockmodel}.
		\bjournal{Ann. Statist.}
		\bvolume{47}
		\bpages{1805--1834}.
		\bdoi{10.1214/17-AOS1620}
	\end{barticle}
	\endbibitem
	
	\bibitem{gabor-concentration}
	\begin{bbook}[author]
		\bauthor{\bsnm{Boucheron},~\bfnm{St{\'e}phane}\binits{S.}},
		\bauthor{\bsnm{Lugosi},~\bfnm{G{\'a}bor}\binits{G.}} \AND
		\bauthor{\bsnm{Massart},~\bfnm{Pascal}\binits{P.}}
		(\byear{2013}).
		\btitle{Concentration Inequalities: A Nonasymptotic Theory of Independence}.
		\bpublisher{Oxford University Press, Oxford}.
	\end{bbook}
	\endbibitem
	
	\bibitem{Bresler}
	\begin{binproceedings}[author]
		\bauthor{\bsnm{Bresler},~\bfnm{Guy}\binits{G.}}
		(\byear{2015}).
		\btitle{Efficiently learning {I}sing models on arbitrary graphs}.
		In \bbooktitle{Proceedings of the Forty-Seventh Annual ACM Symposium on Theory
			of Computing}.
		\bseries{STOC ’15}
		\bpages{771--782}.
		\bpublisher{Association for Computing Machinery}, \baddress{New York, NY, USA}.
		\bdoi{10.1145/2746539.2746631}
	\end{binproceedings}
	\endbibitem
	
	\bibitem{costis-2018}
	\begin{binproceedings}[author]
		\bauthor{\bsnm{Daskalakis},~\bfnm{Constantinos}\binits{C.}},
		\bauthor{\bsnm{Dikkala},~\bfnm{Nishanth}\binits{N.}} \AND
		\bauthor{\bsnm{Kamath},~\bfnm{Gautam}\binits{G.}}
		(\byear{2018}).
		\btitle{Testing {I}sing models}.
		In \bbooktitle{Proceedings of the Twenty-Ninth Annual ACM-SIAM Symposium on
			Discrete Algorithms}
		\bpages{1989-2007}.
		\bdoi{10.1137/1.9781611975031.130}
	\end{binproceedings}
	\endbibitem
	
	\bibitem{devroye-course}
	\begin{bbook}[author]
		\bauthor{\bsnm{Devroye},~\bfnm{Luc}\binits{L.}}
		(\byear{1987}).
		\btitle{A Course in Density Estimation}.
		\bseries{Progress in Probability and Statistics}
		\bvolume{14}.
		\bpublisher{Birkh\"{a}user Boston, Inc., Boston, MA}.
	\end{bbook}
	\endbibitem
	
	\bibitem{devroye-gyorfi}
	\begin{bbook}[author]
		\bauthor{\bsnm{Devroye},~\bfnm{Luc}\binits{L.}} \AND
		\bauthor{\bsnm{Gy\"orfi},~\bfnm{L\'aszl\'o}\binits{L.}}
		(\byear{1985}).
		\btitle{Nonparametric Density Estimation: The $L_1$ View}.
		\bseries{Wiley Series in Probability and Mathematical Statistics: Tracts on
			Probability and Statistics}.
		\bpublisher{John Wiley \& Sons, Inc., New York}.
	\end{bbook}
	\endbibitem
	
	\bibitem{devroye-lugosi}
	\begin{bbook}[author]
		\bauthor{\bsnm{Devroye},~\bfnm{Luc}\binits{L.}} \AND
		\bauthor{\bsnm{Lugosi},~\bfnm{G\'{a}bor}\binits{G.}}
		(\byear{2001}).
		\btitle{Combinatorial Methods in Density Estimation}.
		\bseries{Springer Series in Statistics}.
		\bpublisher{Springer-Verlag, New York}.
	\end{bbook}
	\endbibitem
	
	\bibitem{we_tv_gaussians}
	\begin{barticle}[author]
		\bauthor{\bsnm{Devroye},~\bfnm{Luc}\binits{L.}},
		\bauthor{\bsnm{Mehrabian},~\bfnm{Abbas}\binits{A.}} \AND
		\bauthor{\bsnm{Reddad},~\bfnm{Tommy}\binits{T.}}
		(\byear{2019}).
		\btitle{The total variation distance between high-dimensional {G}aussians}.
		\bjournal{ArXiv e-prints}.
		\bnote{Available at \url{https://arxiv.org/abs/1810.08693}}.
	\end{barticle}
	\endbibitem
	
	\bibitem{Diakonikolas2016}
	\begin{bincollection}[author]
		\bauthor{\bsnm{Diakonikolas},~\bfnm{Ilias}\binits{I.}}
		(\byear{2016}).
		\btitle{Learning structured distributions}.
		In \bbooktitle{Handbook of Big Data}.
		\bseries{Chapman \& Hall/CRC Handb. Mod. Stat. Methods}
		\bpages{267--283}.
		\bpublisher{CRC Press, Boca Raton, FL}.
	\end{bincollection}
	\endbibitem
	
	\bibitem{dudley_vectorspace}
	\begin{barticle}[author]
		\bauthor{\bsnm{Dudley},~\bfnm{R.~M.}\binits{R.~M.}}
		(\byear{1978}).
		\btitle{Central limit theorems for empirical measures}.
		\bjournal{Ann. Probab.}
		\bvolume{6}
		\bpages{899--929}.
		\bdoi{10.1214/aop/1176995384}
	\end{barticle}
	\endbibitem
	
	\bibitem{bipartitemeanfield}
	\begin{barticle}[author]
		\bauthor{\bsnm{Fedele},~\bfnm{Micaela}\binits{M.}} \AND
		\bauthor{\bsnm{Unguendoli},~\bfnm{Francesco}\binits{F.}}
		(\byear{2012}).
		\btitle{Rigorous results on the bipartite mean-field model}.
		\bjournal{Journal of Physics A: Mathematical and Theoretical}
		\bvolume{45}
		\bpages{385001}.
		\bdoi{10.1088/1751-8113/45/38/385001}
	\end{barticle}
	\endbibitem
	
	\bibitem{gilbert}
	\begin{barticle}[author]
		\bauthor{\bsnm{Gilbert},~\bfnm{Edgar~N.}\binits{E.~N.}}
		(\byear{1952}).
		\btitle{A comparison of signalling alphabets}.
		\bjournal{Bell System Tech. J.}
		\bvolume{31}
		\bpages{504--522}.
	\end{barticle}
	\endbibitem
	
	\bibitem{deeplearningbook}
	\begin{bbook}[author]
		\bauthor{\bsnm{Goodfellow},~\bfnm{Ian}\binits{I.}},
		\bauthor{\bsnm{Bengio},~\bfnm{Yoshua}\binits{Y.}} \AND
		\bauthor{\bsnm{Courville},~\bfnm{Aaron}\binits{A.}}
		(\byear{2016}).
		\btitle{Deep Learning}.
		\bseries{Adaptive Computation and Machine Learning}.
		\bpublisher{MIT Press, Cambridge, MA}.
	\end{bbook}
	\endbibitem
	
	\bibitem{Hamilton-upper}
	\begin{bincollection}[author]
		\bauthor{\bsnm{Hamilton},~\bfnm{Linus}\binits{L.}},
		\bauthor{\bsnm{Koehler},~\bfnm{Frederic}\binits{F.}} \AND
		\bauthor{\bsnm{Moitra},~\bfnm{Ankur}\binits{A.}}
		(\byear{2017}).
		\btitle{Information theoretic properties of {M}arkov random fields, and their
			algorithmic applications}.
		In \bbooktitle{Advances in Neural Information Processing Systems 30}
		(\beditor{\bfnm{I.}\binits{I.}~\bsnm{Guyon}},
		\beditor{\bfnm{U.~V.}\binits{U.~V.}~\bsnm{Luxburg}},
		\beditor{\bfnm{S.}\binits{S.}~\bsnm{Bengio}},
		\beditor{\bfnm{H.}\binits{H.}~\bsnm{Wallach}},
		\beditor{\bfnm{R.}\binits{R.}~\bsnm{Fergus}},
		\beditor{\bfnm{S.}\binits{S.}~\bsnm{Vishwanathan}} \AND
		\beditor{\bfnm{R.}\binits{R.}~\bsnm{Garnett}}, eds.)
		\bpages{2463--2472}.
		\bpublisher{Curran Associates, Inc.}
	\end{bincollection}
	\endbibitem
	
	\bibitem{has-fano}
	\begin{barticle}[author]
		\bauthor{\bsnm{Has'minski\u\i},~\bfnm{R.~Z.}\binits{R.~Z.}}
		(\byear{1978}).
		\btitle{A lower bound for risks of nonparametric density estimates in the
			uniform metric}.
		\bjournal{Teor. Veroyatnost. i Primenen.}
		\bvolume{23}
		\bpages{824--828}.
	\end{barticle}
	\endbibitem
	
	\bibitem{linearalgebrahandbook}
	\begin{bbook}[author]
		\beditor{\bsnm{Hogben},~\bfnm{Leslie}\binits{L.}}, ed.
		(\byear{2014}).
		\btitle{Handbook of linear algebra},
		\bedition{second} ed.
		\bseries{Discrete Mathematics and its Applications (Boca Raton)}.
		\bpublisher{CRC Press, Boca Raton, FL}.
	\end{bbook}
	\endbibitem
	
	\bibitem{matrix_analysis}
	\begin{bbook}[author]
		\bauthor{\bsnm{Horn},~\bfnm{Roger~A.}\binits{R.~A.}} \AND
		\bauthor{\bsnm{Johnson},~\bfnm{Charles~R.}\binits{C.~R.}}
		(\byear{2013}).
		\btitle{Matrix Analysis},
		\bedition{Second} ed.
		\bpublisher{Cambridge University Press, Cambridge}.
	\end{bbook}
	\endbibitem
	
	\bibitem{ibragimov}
	\begin{bincollection}[author]
		\bauthor{\bsnm{Ibragimov},~\bfnm{Ildar}\binits{I.}}
		(\byear{2001}).
		\btitle{Estimation of analytic functions}.
		In \bbooktitle{State of the Art in Probability and Statistics ({L}eiden,
			1999)}.
		\bseries{IMS Lecture Notes Monogr. Ser.}
		\bvolume{36}
		\bpages{359--383}.
		\bpublisher{Inst. Math. Statist., Beachwood, OH}.
		\bdoi{10.1214/lnms/1215090078}
	\end{bincollection}
	\endbibitem
	
	\bibitem{KMV}
	\begin{barticle}[author]
		\bauthor{\bsnm{Kalai},~\bfnm{Adam}\binits{A.}},
		\bauthor{\bsnm{Moitra},~\bfnm{Ankur}\binits{A.}} \AND
		\bauthor{\bsnm{Valiant},~\bfnm{Gregory}\binits{G.}}
		(\byear{2012}).
		\btitle{Disentangling {G}aussians}.
		\bjournal{Comm. ACM}
		\bvolume{55}.
	\end{barticle}
	\endbibitem
	
	\bibitem{Kearns}
	\begin{binproceedings}[author]
		\bauthor{\bsnm{Kearns},~\bfnm{Michael}\binits{M.}},
		\bauthor{\bsnm{Mansour},~\bfnm{Yishay}\binits{Y.}},
		\bauthor{\bsnm{Ron},~\bfnm{Dana}\binits{D.}},
		\bauthor{\bsnm{Rubinfeld},~\bfnm{Ronitt}\binits{R.}},
		\bauthor{\bsnm{Schapire},~\bfnm{Robert~E.}\binits{R.~E.}} \AND
		\bauthor{\bsnm{Sellie},~\bfnm{Linda}\binits{L.}}
		(\byear{1994}).
		\btitle{On the learnability of discrete distributions}.
		In \bbooktitle{Proceedings of the Twenty-sixth Annual ACM Symposium on Theory
			of Computing}.
		\bseries{STOC '94}
		\bpages{273--282}.
		\bpublisher{ACM}, \baddress{New York, NY, USA}.
		\bdoi{10.1145/195058.195155}
	\end{binproceedings}
	\endbibitem
	
	\bibitem{Klivans-upper}
	\begin{bincollection}[author]
		\bauthor{\bsnm{Klivans},~\bfnm{Adam~R.}\binits{A.~R.}} \AND
		\bauthor{\bsnm{Meka},~\bfnm{Raghu}\binits{R.}}
		(\byear{2017}).
		\btitle{Learning graphical models using multiplicative weights}.
		In \bbooktitle{58th {A}nnual {IEEE} {S}ymposium on {F}oundations of {C}omputer
			{S}cience---{FOCS} 2017}
		\bpages{343--354}.
		\bpublisher{IEEE Computer Soc., Los Alamitos, CA}.
	\end{bincollection}
	\endbibitem
	
	\bibitem{kullback-book}
	\begin{bbook}[author]
		\bauthor{\bsnm{Kullback},~\bfnm{Solomon}\binits{S.}}
		(\byear{1997}).
		\btitle{Information Theory and Statistics}.
		\bpublisher{Dover Publications, Inc., Mineola, NY.}
		\bnote{Reprint of the second (1968) edition}.
	\end{bbook}
	\endbibitem
	
	\bibitem{graphical-models}
	\begin{bbook}[author]
		\bauthor{\bsnm{Lauritzen},~\bfnm{S.~L.}\binits{S.~L.}}
		(\byear{1996}).
		\btitle{Graphical Models}.
		\bseries{Oxford Statistical Science Series}
		\bvolume{17}.
		\bpublisher{The Clarendon Press, Oxford University Press, New York}.
	\end{bbook}
	\endbibitem
	
	\bibitem{lecam-1}
	\begin{barticle}[author]
		\bauthor{\bsnm{Le~Cam},~\bfnm{Lucien}\binits{L.}}
		(\byear{1973}).
		\btitle{Convergence of estimates under dimensionality restrictions}.
		\bjournal{Ann. Statist.}
		\bvolume{1}
		\bpages{38--53}.
	\end{barticle}
	\endbibitem
	
	\bibitem{lecam-2}
	\begin{bbook}[author]
		\bauthor{\bsnm{Le~Cam},~\bfnm{Lucien}\binits{L.}}
		(\byear{1986}).
		\btitle{Asymptotic Methods in Statistical Decision Theory}.
		\bseries{Springer Series in Statistics}.
		\bpublisher{Springer-Verlag, New York}.
	\end{bbook}
	\endbibitem
	
	\bibitem{Santhanam-lower}
	\begin{barticle}[author]
		\bauthor{\bsnm{Santhanam},~\bfnm{Narayana~P.}\binits{N.~P.}} \AND
		\bauthor{\bsnm{Wainwright},~\bfnm{Martin~J.}\binits{M.~J.}}
		(\byear{2012}).
		\btitle{Information-theoretic limits of selecting binary graphical models in
			high dimensions}.
		\bjournal{IEEE Trans. Inform. Theory}
		\bvolume{58}
		\bpages{4117--4134}.
		\bdoi{10.1109/TIT.2012.2191659}
	\end{barticle}
	\endbibitem
	
	\bibitem{understanding}
	\begin{bbook}[author]
		\bauthor{\bsnm{Shalev-Shwartz},~\bfnm{Shai}\binits{S.}} \AND
		\bauthor{\bsnm{Ben-David},~\bfnm{Shai}\binits{S.}}
		(\byear{2014}).
		\btitle{Understanding Machine Learning: From Theory to Algorithms}.
		\bpublisher{Cambridge University Press}
		\bnote{Available at
			\url{http://www.cs.huji.ac.il/~shais/UnderstandingMachineLearning/copy.html}}.
	\end{bbook}
	\endbibitem
	
	\bibitem{Shanmugam-lower}
	\begin{binproceedings}[author]
		\bauthor{\bsnm{Shanmugam},~\bfnm{Karthikeyan}\binits{K.}},
		\bauthor{\bsnm{Tandon},~\bfnm{Rashish}\binits{R.}},
		\bauthor{\bsnm{Dimakis},~\bfnm{Alexandros~G.}\binits{A.~G.}} \AND
		\bauthor{\bsnm{Ravikumar},~\bfnm{Pradeep}\binits{P.}}
		(\byear{2014}).
		\btitle{On the information theoretic limits of learning {I}sing models}.
		In \bbooktitle{Proceedings of the 27th International Conference on Neural
			Information Processing Systems}.
		\bseries{NIPS'14}
		\bvolume{2}
		\bpages{2303--2311}.
		\bpublisher{MIT Press}, \baddress{Cambridge, MA, USA}.
	\end{binproceedings}
	\endbibitem
	
	\bibitem{talagrand-2003}
	\begin{bbook}[author]
		\bauthor{\bsnm{Talagrand},~\bfnm{Michel}\binits{M.}}
		(\byear{2003}).
		\btitle{Spin Glasses: A Challenge for Mathematicians. Cavity and Mean Field
			Models}.
		\bseries{Ergebnisse der Mathematik und ihrer Grenzgebiete. 3. Folge. A Series
			of Modern Surveys in Mathematics}
		\bvolume{46}.
		\bpublisher{Springer-Verlag, Berlin}.
	\end{bbook}
	\endbibitem
	
	\bibitem{talagrand-2010}
	\begin{bbook}[author]
		\bauthor{\bsnm{Talagrand},~\bfnm{Michel}\binits{M.}}
		(\byear{2011}).
		\btitle{Mean Field Models for Spin Glasses. {V}olume {I}. Basic Examples}.
		\bseries{Ergebnisse der Mathematik und ihrer Grenzgebiete. 3. Folge. A Series
			of Modern Surveys in Mathematics}
		\bvolume{54}.
		\bpublisher{Springer-Verlag, Berlin}.
		\bdoi{10.1007/978-3-642-15202-3}
	\end{bbook}
	\endbibitem
	
	\bibitem{talagrand-2011}
	\begin{bbook}[author]
		\bauthor{\bsnm{Talagrand},~\bfnm{Michel}\binits{M.}}
		(\byear{2011}).
		\btitle{Mean Field Models for Spin Glasses. {V}olume {II}. Advanced
			Replica-Symmetry and Low Temperature}.
		\bseries{Ergebnisse der Mathematik und ihrer Grenzgebiete. 3. Folge. A Series
			of Modern Surveys in Mathematics}
		\bvolume{55}.
		\bpublisher{Springer, Heidelberg}.
	\end{bbook}
	\endbibitem
	
	\bibitem{tsybakov}
	\begin{bbook}[author]
		\bauthor{\bsnm{Tsybakov},~\bfnm{Alexandre~B.}\binits{A.~B.}}
		(\byear{2009}).
		\btitle{Introduction to Nonparametric Estimation}.
		\bseries{Springer Series in Statistics}.
		\bpublisher{Springer, New York}
		\bnote{Revised and extended from the 2004 French original, Translated by
			Vladimir Zaiats}.
		\bdoi{10.1007/b13794}
	\end{bbook}
	\endbibitem
	
	\bibitem{vapnik-cherv}
	\begin{barticle}[author]
		\bauthor{\bsnm{Vapnik},~\bfnm{V.~N.}\binits{V.~N.}} \AND
		\bauthor{\bsnm{\v{C}ervonenkis},~\bfnm{A.~Ja.}\binits{A.~J.}}
		(\byear{1971}).
		\btitle{The uniform convergence of frequencies of the appearance of events to
			their probabilities}.
		\bjournal{Teor. Verojatnost. i Primenen.}
		\bvolume{16}
		\bpages{264--279}.
	\end{barticle}
	\endbibitem
	
	\bibitem{varshamov}
	\begin{barticle}[author]
		\bauthor{\bsnm{Var\v{s}amov},~\bfnm{R.~R.}\binits{R.~R.}}
		(\byear{1957}).
		\btitle{The evaluation of signals in codes with correction of errors}.
		\bjournal{Dokl. Akad. Nauk}
		\bvolume{117}
		\bpages{739--741}.
	\end{barticle}
	\endbibitem
	
	\bibitem{vershynin-book}
	\begin{bbook}[author]
		\bauthor{\bsnm{Vershynin},~\bfnm{Roman}\binits{R.}}
		(\byear{2018}).
		\btitle{High-dimensional probability}.
		\bseries{Cambridge Series in Statistical and Probabilistic Mathematics}
		\bvolume{47}.
		\bpublisher{Cambridge University Press, Cambridge}
		\bnote{Available at
			\url{https://www.math.uci.edu/~rvershyn/papers/HDP-book/HDP-book.html}}.
		\bdoi{10.1017/9781108231596}
	\end{bbook}
	\endbibitem
	
	\bibitem{VMLC}
	\begin{binproceedings}[author]
		\bauthor{\bsnm{Vuffray},~\bfnm{Marc}\binits{M.}},
		\bauthor{\bsnm{Misra},~\bfnm{Sidhant}\binits{S.}},
		\bauthor{\bsnm{Lokhov},~\bfnm{Andrey~Y.}\binits{A.~Y.}} \AND
		\bauthor{\bsnm{Chertkov},~\bfnm{Michael}\binits{M.}}
		(\byear{2016}).
		\btitle{Interaction Screening: Efficient and Sample-Optimal Learning of Ising
			Models}.
		In \bbooktitle{Proceedings of the 30th International Conference on Neural
			Information Processing Systems}.
		\bseries{NIPS’16}
		\bpages{2603–2611}.
		\bpublisher{Curran Associates Inc.}, \baddress{Red Hook, NY, USA}.
	\end{binproceedings}
	\endbibitem
	
	\bibitem{yatracos}
	\begin{barticle}[author]
		\bauthor{\bsnm{Yatracos},~\bfnm{Yannis~G.}\binits{Y.~G.}}
		(\byear{1988}).
		\btitle{A note on {$L_1$} consistent estimation}.
		\bjournal{Canad. J. Statist.}
		\bvolume{16}
		\bpages{283--292}.
		\bdoi{10.2307/3314734}
	\end{barticle}
	\endbibitem
	
	\bibitem{yu-survey}
	\begin{bincollection}[author]
		\bauthor{\bsnm{Yu},~\bfnm{Bin}\binits{B.}}
		(\byear{1997}).
		\btitle{{A}ssouad, {F}ano, and {L}e {C}am}.
		In \bbooktitle{Festschrift for {L}ucien {L}e {C}am}
		\bpages{423--435}.
		\bpublisher{Springer, New York}.
	\end{bincollection}
	\endbibitem
	
\end{thebibliography}
\end{document}